\newcommand{\IN}{\mathbb{N}}
\newcommand{\IZ}{\mathbb{Z}}
\newcommand{\IR}{\mathbb{R}}
\newcommand{\N}{\mathbb{N}}
\newcommand{\Z}{\mathbb{Z}}
\newcommand{\R}{\mathbb{R}}
\newcommand{\dif}{\ \mathrm{d}}
\newcommand{\sF}{\mathcal{F}}
\newcommand{\sH}{\mathcal{H}}
\newcommand{\reff}[1]{(\ref{#1})}
\newcommand{\IP}{\mathbb{P}}
\newcommand{\IE}{\mathbb{E}}
\newcommand{\Ii}{\mathbbm{1}}
\newcommand{\Var}{\mbox{Var}}
\newcommand{\Cov}{\mathrm{Cov}}
\newcommand{\tr}{\mathrm{tr}}
\newcommand{\dto}{\overset{d}{\to}}
\newcommand{\sK}{\mathcal{K}}
\newtheorem{theorem}{Theorem}[section]
\newtheorem{corollary}[theorem]{Corollary}
\newtheorem{definition}[theorem]{Definition}
\newtheorem{proposition}[theorem]{Proposition}
\newtheorem{lemma}[theorem]{Lemma}
\newtheorem{remark}[theorem]{Remark}
\newtheorem{assumption}[theorem]{Assumption}
\DeclareMathOperator{\argmin}{argmin}
\begin{document}

\authorheadline{Rainer Dahlhaus and Stefan Richter}
\runningtitle{Adaptation for locally stationary processes}

\begin{frontmatter}

\title{Adaptation for nonparametric estimators of locally stationary processes}

\author[1]{Rainer Dahlhaus}
 
\address[1]{Institut f{\"{u}}r Angewandte Mathematik\unskip, 
    Heidelberg University\unskip, Im Neuenheimer Feld 205\unskip, Heidelberg\unskip, Germany
  %\ead{dahlhaus@statlab.uni-heidelberg.de}
  }

\author[1]{Stefan Richter}
 
\received{2019}

\begin{abstract}

Two adaptive bandwidth selection methods for nonparametric estimators in locally stationary processes are proposed. We investigate a cross validation approach and a method based on contrast minimization and derive asymptotic properties of both methods.
The results are applicable for different statistics under a broad setting of locally stationarity including nonlinear processes. At the same time we deepen the general framework for local stationarity based on stationary approximations. For example a general Bernstein inequality is derived for such processes. A simulation study performed on the covariance function and more complicated functionals shows that both adaptation methods work  well.

\end{abstract}

%\MSC[2010]{16W10 (primary);  16D50 (secondary)}

\end{frontmatter}

%%
%% Start line numbering here if you want
%%
% \linenumbers

%\begin{center}
%	Keywords: Non-stationary processes, Minimax-optimal, Model selection
%\end{center}

%\tableofcontents

\section{Introduction}

In this paper we develop data adaptive bandwidth selection rules for nonstationary processes under a novel paradigm of local stationarity recently introduced in \cite{richterdahlhaus2018bernoulli}. Time series sampled at high frequency or just long time series exhibit more frequently nonstationarity instead of stationarity, and correspondingly the use of models with time varying parameters or of locally stationary processes in general has increased a lot during recent years. A prominent example from financial econometrics is the use of GARCH-models to model conditional heteroscedasticity: While in the beginning ordinary GARCH-models have been regarded as sufficient to model conditional heteroscedasticity of the volatility, insight has grown that for example modeling of the daily pattern can be improved by a time varying GARCH-model (cf.
\cite{AmadoTeraesvirta2013je}, \cite{AmadoTeraesvirta2017er},\cite{DahlhausSubbaRao2006}). Analogously, time varying models for the trading intensity could be used such as locally stationary Hawkes models (\cite{vonSachsRoueff2016spa}). Also motivated by financial returns Koo and Linton \cite{KooLinton2012je} have studied locally stationary diffusion processes with a time varying drift and a volatility coefficient varying over time and space.

The intention and novelty of this paper is twofold. On the one hand we want to establish methods for adaptive bandwidth selection for nonparametric estimators, on the other hand we want to deepen  the general framework for local stationarity based on stationary approximations. The latter goes hand in hand with the former since several results (such as Bernstein inequalities) are of high value beyond the topic of adaptation. The development of such a general framework / hyper-model for locally stationary processes is important, since such a hyper-model can for example serve as a general assumption for nonparametric estimation, as a framework to prove general technical results such as strong laws of large numbers or the Bernstein inequality of this paper, as a framework to judge parametric models under model-misspecification, or as a setting for model selection strategies. General frameworks for locally stationary processes that have been used before, are time varying linear processes as in \cite{Dahlhaus1997},
and time-varying Bernoulli shifts in combination with the functional dependence measure (\cite{Wu2005},\cite{WuAndZhou2011}). Furthermore, processes with evolutionary spectra in the setting of Priestley (\cite{Priestley1965}, \cite{Priestley1988})  may also be regarded as a hyper-model - although the setting does not allow for asymptotic considerations in a strict mathematical sense since it is not an infill asymptotics approach.

In \cite{richterdahlhaus2018bernoulli} we have introduced a different framework which formalizes the original idea behind local stationarity - namely that at each point in time the observed nonstationary process can be approximated by a stationary process. Such a property was proved in the context of time varying ARCH-processes in \cite{DahlhausSubbaRao2006} and investigated further in the context of random coefficient models in \cite{SubbaRao2006}. The use of such approximations as a general model was recommended by \cite{vogt2012}, who investigated
nonparametric regression for locally stationary time series, and by \cite{KooLinton2012je}, who investigated semiparametric estimation for locally stationary models.

Within this new framework, the focus of this paper is on deriving methods for adaptive bandwidth selection of general nonparametric estimators. These include for example estimators of the time varying covariance function, the autocorrelation function, the time varying characteristic function or general moment estimators.
%\textbf{Stefan}
Different to nonparametric regression, there exist only very few theoretical results about adaptivity for locally stationary processes. We mention \cite{mallat1998} who discussed adaptive covariance estimation for a general class of locally stationary processes. Other results are constructed for specific models and are partly dependent on further tuning parameters: \cite{giraud2015} discussed online-adaptive forecasting of tvAR processes and \cite{arkoun2008}, \cite{arkoun2011} proposed methods for sequential and minimax-optimal bandwidth selection for tvAR processes of order 1. In \cite{richterdahlhaus2018aos} adaptive estimation was developed for 
time varying parameter curves by means of local M-estimators (i.e. in a locally parametric setting), while in this paper the task is nonparametrical inference also locally. Technically, the difference is that we do no longer assume that the observed time series comes from a specific model like tvGARCH or tvAR (and use this knowledge to build the estimator). Instead we are interested in general properties of the time series like the mean, covariances, correlations or characteristic functions.
%\textbf{Stefan}
 
In Section \ref{sec2} we introduce the framework of local stationarity based on stationary approximations and derivative processes. We give a short overview of the basic results from \cite{richterdahlhaus2018bernoulli} and extend these results in that we prove an invariance property of the results also for nonlinear transformations based on infinitely many lags. In Section \ref{sec3} we prove asymptotic optimality of a global bandwidth selection approach based on cross validation  with respect to a mean squared error type distance measure. We  discuss its behavior in practice via simulations. In Section \ref{sec4} we investigate a local bandwidth selection procedure using a contrast minimization approach in the spirit of \cite{lepski2011}. We prove that the resulting nonparametric estimator attains the optimal rate for the mean squared error up to a log-factor. We compare the obtained method with a global optimal selection routine and show the superiority of our method in selected examples. The section contains also a Bernstein inequality which is of interest beyond the present paper. Section \ref{sec5} contains some conclusions. The Appendix in Section \ref{sec6} contains several technical results including the proofs of the main theorems and a more general result for the setting in Section \ref{sec4}.

\section{The Model and Main results}
\label{sec2}

\subsection{The Model}

We assume that we observe $n$ realizations of a process $X_{t,n}$ at time points $t = 1,...,n$. 
The process is considered to be locally stationary in the following sense (cf. \cite{richterdahlhaus2018bernoulli}):
\begin{assumption}
    \label{def_localstat}
	Let $q \ge 1$. There exists some $D > 0$ and for each $u\in[0,1]$, there exists a (strictly) stationary process $(\tilde X_t(u))_{t\in\IZ}$ such that for all $t = 1,...,n$ and $u,u'\in[0,1]$, $\sup_{u\in[0,1]}\|\tilde X_0(u)\|_q \le D$, $\sup_{t,n}\|X_{t,n}\|_q \le D$ and
	\[
		\|X_{t,n} - \tilde X_t(t/n)\|_q \le D n^{-1}, \quad\quad \|\tilde X_0(u) - \tilde X_0(u')\|_q \le D |u-u'|.
	\]
	Here, we use $\|Z\|_q := \IE[|Z|^q]^{1/q}$ for random variables $Z$.
\end{assumption}
The conditions mean that $X_{t,n}$ can be approximated locally, for $|u-\frac{t}{n}|  \ll 1$, by a stationary process $\tilde X_t(u)$. The continuity condition stated on $u\mapsto \tilde X_t(u)$ implies that the stationary approximations vary smoothly over time. This motivates the interpretation of locally stationary processes as processes which change there (approximate) stationary properties smoothly over time. The main properties of $X_{t,n}$ therefore are encoded in the stationary approximations and it is therefore of interest to analyze terms of the form $\IE g(\tilde X_t(u),\tilde X_{t-1}(u),...)$ which are a natural approximation of $\IE g(Y_{t,n},Y_{t-1,n},...)$.\\

More detailed, define $Y_{t,n} := (X_{t,n},X_{t-1,n},...,X_{1,n},0,0,...)$ and $\tilde Y_t(u) := (\tilde X_{s}(u):s \le t)$. Our goal is to estimate functionals of the form
\[
	u \mapsto G(u) := \IE g(\tilde Y_{t}(u)),
\]
where $g:\R^{\N} \to \R^d$ is some measurable function. Important examples are:
\begin{itemize}
	\item Time-varying covariances, $c(u,k) := \IE \tilde X_0(u) \tilde X_k(u)$ with $g(x_0,x_1,...,x_k) := x_0 x_k$,
	\item Time-varying characteristic functions,  $\phi(t,u) := \IE e^{i t \tilde X_0(u)}$, with  $g_t(x) = e^{itx}$.
	\item Time-varying distribution functions, $F(y,u) := \IE \Ii_{\{\tilde X_t(u) \le y\}}$, with $g_y(x) := \Ii_{\{x\le y\}}$.
	%\item Spectral distribution function ???
\end{itemize} 
A standard estimator is given by a localized moment estimator,
\begin{equation}
	\hat G_h(u) := \frac{1}{n}\sum_{t=1}^{n}K_h(t/n-u)\cdot g(Y_{t,n}),\label{model_estimator}
\end{equation}

where $h\in (0,\infty)$ is some bandwidth and $K$ is a kernel function coming from the class $\sK$ defined below.

\begin{definition}
	A function $K$ is in the set $\sK$ if $K$ is nonnegative, Lipschitz continuous, has support $[-\frac{1}{2},\frac{1}{2}]$ and $\int K(x)\dif x = 1$. We set $|K|_{\infty} := \sup_{x\in[-\frac{1}{2},\frac{1}{2}]}|K(x)|$.
\end{definition}

In the following we present a general theory how to obtain asymptotic results for such estimators with a focus on adaptation, i.e. on choosing the bandwidth $h$. We therefore assume that $g$ belongs to the class $\sH(M,\chi,C)$ below which is a Lipschitz-type condition with polynomially growing constants (ie. the Lipschitz condition is relaxed for larger $x$ and $y$). For some sequence of non-negative real-valued numbers $\chi = (\chi_i)_{i\in\IN}$ and some sequence of complex-valued numbers $x = (x_i)_{i\in\IN}$ define $|x|_{\chi} := \sum_{i\in\N}\chi_i |x_i|$.

\begin{definition}
	We say that $g:\IR^{\IN} \to \IR$ belongs to the class $\sH(M,\chi,C)$ if there exists some $M\in\IN$, some constant $C > 0,\varepsilon > 0$ and some sequence of nonnegative real numbers $\chi = (\chi)_{i\in\IN}$ with $\chi_i = O(i^{-2-\varepsilon})$ such that
	\[
		\sup_{x\not=y}\frac{|g(x) - g(y)|}{|x-y|_{\chi}\cdot (1 + |x|_{\chi}^{M-1} + |y|_{\chi}^{M-1})}\le C.
	\]
	A function $g:\IR^{\IN}\to \IR^d$ ($d\in\IN$) is in $\sH(M,\chi,C)$ if each component belongs to $\sH(M,\chi,C)$.
\end{definition}

By Hoelder's inequality it is easy to see that the following 'invariance principle' of local stationarity holds:
\begin{proposition}
	If $(X_{t,n})_{t=1,...,n}$ is locally stationary in the sense of Assumption \ref{def_localstat} (with  some $q > 0$) and $g \in \sH(M,\chi,C)$, then the same holds for $g(Y_{t,n})$ (with $q' = \frac{q}{M}$).
\end{proposition}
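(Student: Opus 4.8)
The plan is to take as the stationary approximation of the transformed process the sequence $\tilde Z_t(u) := g(\tilde Y_t(u))$, which is strictly stationary in $t$ because $(\tilde X_s(u))_{s\in\IZ}$ is and $t\mapsto \tilde Y_t(u)$ is merely a shift of it. With this choice, verifying Assumption \ref{def_localstat} for $(g(Y_{t,n}))_{t}$ with exponent $q' = q/M$ comes down to four moment estimates, each obtained by inserting the defining inequality of $\sH(M,\chi,C)$ and then peeling off the polynomial factor $1 + |x|_{\chi}^{M-1} + |y|_{\chi}^{M-1}$ by Hölder's inequality. The bookkeeping point to keep in mind throughout is that this factor forces Hölder exponents $q$ and $q/(M-1)$, whose reciprocals add up to $M/q = 1/q'$ --- which is exactly where the passage from $q$ to $q/M$ moments comes from.

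First I would record two auxiliary bounds, $\sup_{t,n}\big\| |Y_{t,n}|_{\chi}\big\|_q \le D\sum_i \chi_i$ and $\sup_{u}\big\| |\tilde Y_0(u)|_{\chi}\big\|_q \le D\sum_i \chi_i$, both finite since $\chi_i = O(i^{-2-\varepsilon})$. These follow by applying Minkowski's inequality to $|x|_{\chi} = \sum_i \chi_i |x_i|$ (in its quasi-norm form if $q<1$) together with the uniform $q$-th moment bounds and stationarity in Assumption \ref{def_localstat}. Applying the $\sH$-bound at $y = 0$ gives $|g(x)| \le |g(0)| + C(|x|_{\chi} + |x|_{\chi}^M)$, and since $Mq' = q$ one has $\big\| |x|_{\chi}^M\big\|_{q'} = \big\| |x|_{\chi}\big\|_q^M$; together these yield at once $\sup_{t,n}\|g(Y_{t,n})\|_{q'} < \infty$ and $\sup_u \|g(\tilde Y_0(u))\|_{q'} < \infty$, with a constant depending only on $D$, $C$, $M$ and $\sum_i\chi_i$. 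For the smoothness-in-$u$ condition I would apply the $\sH$-bound with $x=\tilde Y_0(u)$, $y=\tilde Y_0(u')$ and then Hölder with exponents $q$ and $q/(M-1)$: the polynomial factor is bounded in $L^{q/(M-1)}$ by the auxiliary estimate, while $\big\| |\tilde Y_0(u) - \tilde Y_0(u')|_{\chi}\big\|_q \le \sum_i \chi_i \|\tilde X_{1-i}(u) - \tilde X_{1-i}(u')\|_q \le D|u-u'|\sum_i \chi_i$ by the Lipschitz property of $u\mapsto \tilde X_0(u)$ (and stationarity), so that $\|g(\tilde Y_0(u)) - g(\tilde Y_0(u'))\|_{q'} \le D'|u-u'|$.

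The last and slightly more delicate condition, $\|g(Y_{t,n}) - g(\tilde Y_t(t/n))\|_{q'} \le D' n^{-1}$, reduces in the same way to controlling $\big\| |Y_{t,n} - \tilde Y_t(t/n)|_{\chi}\big\|_q$. For $i \le t$ the $i$-th coordinate of this difference is $X_{t-i+1,n} - \tilde X_{t-i+1}(t/n)$, and by the triangle inequality and both parts of Assumption \ref{def_localstat},
\[
	\big\|X_{t-i+1,n} - \tilde X_{t-i+1}(t/n)\big\|_q \le D n^{-1} + D\,\frac{i-1}{n} \le D\,\frac{i}{n},
\]
the second term accounting for moving the time argument from $(t-i+1)/n$ to $t/n$. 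Summing against $\chi$ gives $D n^{-1}\sum_i i\chi_i$, finite precisely because $\chi_i = O(i^{-2-\varepsilon})$ makes $i\chi_i = O(i^{-1-\varepsilon})$ summable; the zero-padded coordinates $i>t$ contribute at most $D\sum_{i>t}\chi_i = O(t^{-1-\varepsilon})$, absorbed by the tail of $\chi$. One last Hölder step (exponents $q$ and $q/(M-1)$ again) then converts this into the desired $O(n^{-1})$ bound for $\|g(Y_{t,n}) - g(\tilde Y_t(t/n))\|_{q'}$.

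The main obstacle here is not conceptual but organisational: one must choose the Hölder exponents so that they balance to $1/q'$, and --- crucially --- exploit the quantitative modulus of continuity $|s/n-t/n| = |s-t|/n$ rather than a crude $O(1)$ bound, so that it is the lag-weighted sum $\sum_i i\chi_i$, and not merely $\sum_i\chi_i$, that has to converge; the hypothesis $\chi_i = O(i^{-2-\varepsilon})$ is calibrated to exactly this. Everything else is routine use of Minkowski's and Hölder's inequalities.
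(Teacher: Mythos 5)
Your route is the one the paper intends: the paper's own justification is the one-line appeal to H\"older's inequality, and the detailed computations it actually relies on (Lemma \ref{lemma_statapprox} and Lemma \ref{lemma_exponential_moments}, in particular the estimate \reff{lemma_statapprox_eq1}) are exactly your scheme of inserting the $\sH(M,\chi,C)$ bound and peeling off the factor $1+|x|_{\chi}^{M-1}+|y|_{\chi}^{M-1}$ with exponents $q$ and $q/(M-1)$, so that the reciprocals add to $M/q=1/q'$. The moment bounds and the Lipschitz-in-$u$ bound are handled correctly this way (the identity $\|\tilde X_{1-i}(u)-\tilde X_{1-i}(u')\|_q=\|\tilde X_0(u)-\tilde X_0(u')\|_q$ tacitly uses joint stationarity in $t$ of the pair $(\tilde X_t(u),\tilde X_t(u'))$, which is not literally part of Assumption \ref{def_localstat} but is exactly how the paper itself uses it, e.g.\ via the representation $\tilde X_t(u)=H(\sF_t,u)$; this is not a real objection).

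The genuine gap is in your last step. The zero-padded coordinates $i>t$ contribute, as you compute, $D\sum_{i>t}\chi_i=O(t^{-1-\varepsilon})$, and this is \emph{not} $O(n^{-1})$ uniformly over $t=1,\dots,n$: for $t$ bounded (say $t=1$) it is a constant, and for any $t\ll n^{1/(1+\varepsilon)}$ it exceeds $n^{-1}$ in order. So your argument does not deliver $\|g(Y_{t,n})-g(\tilde Y_t(t/n))\|_{q'}\le D'n^{-1}$ for all $t=1,\dots,n$; what it proves is $\|g(Y_{t,n})-g(\tilde Y_t(t/n))\|_{q'}\le c\,(n^{-1}+\sum_{i>t}\chi_i)$, and the boundary term cannot simply be ``absorbed'' --- indeed for a linear $g(x)=\sum_i\chi_i x_i$ and $t=1$ the left-hand side is of constant order, so no H\"older trick can repair this. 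This is precisely why the paper, in its operative version Lemma \ref{lemma_statapprox}, keeps the tail $\sum_{i\ge t}\chi_i$ and only removes it through the kernel weights: at a fixed interior $u$ the sum effectively runs over $t\gtrsim un$, giving the $O(n^{-1})$ bound of part (i) with a constant depending on $u$, and uniformly in $u$ one only gets the weaker rate $O((nh)^{-1})$ of part (ii). So you should either state the conclusion with the extra tail term (which is all the later results use), or restrict the approximation claim to $t$ bounded away from the left boundary (e.g.\ $t\ge\epsilon n$); as literally written, the uniform $O(n^{-1})$ claim for every $t$ does not follow from your estimate. Everything before that step is fine, and the $q<1$ quasi-norm caveat you flag is harmless (generalized H\"older and the power identity $\||x|_{\chi}^M\|_{q'}=\||x|_{\chi}\|_q^M$ do not need $q'\ge1$).
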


Based on this result we can use Theorem 2.7 in \cite{richterdahlhaus2018bernoulli} to obtain

\begin{theorem}[Consistency] If $X_{t,n}$ is locally stationary in the sense of Assumption  \ref{def_localstat} with $q \ge M$ and $g \in \sH(M,\chi,C)$, then
\[
	\hat G_h(u) \to G(u)
\]
in probability and in $L^1$ provided that $nh\to\infty$, $h\to 0$.
\end{theorem}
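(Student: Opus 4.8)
The plan is to establish consistency by the classical bias-variance decomposition $\hat G_h(u) - G(u) = \big(\hat G_h(u) - \IE \hat G_h(u)\big) + \big(\IE \hat G_h(u) - G(u)\big)$ and show both terms vanish, using the invariance principle (Proposition 2.4) to reduce everything to statements about the locally stationary process $g(Y_{t,n})$, which by that proposition is itself locally stationary with $L^{q'}$-bound, $q' = q/M \ge 1$. Since $L^1$-convergence implies convergence in probability, it suffices to prove $\|\hat G_h(u) - G(u)\|_1 \to 0$.

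First I would treat the bias term. Writing $\IE\hat G_h(u) = \frac{1}{n}\sum_{t=1}^n K_h(t/n - u)\, \IE g(Y_{t,n})$, I would replace $\IE g(Y_{t,n})$ by $\IE g(\tilde Y_t(t/n)) = G(t/n)$ at cost $O(n^{-1})$ uniformly in $t$: this is exactly the content of Assumption \ref{def_localstat} combined with $g\in\sH(M,\chi,C)$ and Hölder's inequality (the same estimate that drives Proposition 2.4). Then $\frac{1}{n}\sum_{t=1}^n K_h(t/n-u) G(t/n)$ is a Riemann-sum approximation of $\int K_h(v-u) G(v)\, dv = \int K(w) G(u+hw)\, dw$; since $G$ is Lipschitz (again by Assumption \ref{def_localstat} and $g\in\sH$) and bounded, and $K$ is Lipschitz with compact support, the Riemann-sum error is $O\big(\frac{1}{nh}\big)$ and $\int K(w) G(u+hw)\, dw \to G(u)$ as $h\to 0$. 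This uses only $nh\to\infty$ and $h\to 0$.

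Next the stochastic term $\hat G_h(u) - \IE\hat G_h(u) = \frac{1}{n}\sum_{t=1}^n K_h(t/n-u)\big(g(Y_{t,n}) - \IE g(Y_{t,n})\big)$. Here I would invoke Theorem 2.7 of \cite{richterdahlhaus2018bernoulli}, which is precisely the tool the paper points to: applied to the locally stationary array $g(Y_{t,n})$ (legitimate since $q' \ge 1$), it yields a law-of-large-numbers / $L^1$-bound for such weighted sums showing this centered average converges to $0$ in $L^1$ and in probability, under the bandwidth conditions $nh\to\infty$, $h\to0$. Combining the vanishing bias and vanishing stochastic part gives the claim.

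The main obstacle is the stochastic term: controlling a weighted sum of a dependent, non-stationary array requires genuine input, and this is exactly why the hypotheses are tailored so that Theorem 2.7 of \cite{richterdahlhaus2018bernoulli} applies after the invariance reduction — the polynomial-growth Lipschitz class $\sH(M,\chi,C)$ with $\chi_i = O(i^{-2-\varepsilon})$ is designed so that $g(Y_{t,n})$ inherits the stationary-approximation structure (with a short-range-dependence type decay from the summable weights $\chi$) needed for that theorem. The bias analysis, by contrast, is elementary once the $O(n^{-1})$ local-approximation bound and the Lipschitz continuity of $G$ are in hand.
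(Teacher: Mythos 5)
Your proposal is correct and follows essentially the same route as the paper: the paper proves this theorem simply by combining the invariance proposition (so that $g(Y_{t,n})$ is locally stationary with $q' = q/M \ge 1$) with Theorem 2.7 of \cite{richterdahlhaus2018bernoulli}, which is exactly the reduction you make. Your explicit bias--variance split and Riemann-sum bias analysis are harmless extra detail already subsumed by that cited law of large numbers.
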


To provide a more detailed expansion of the bias $\IE \hat G_h(u)$, we need the following additional assumption on the stationary approximation sequence:

\begin{assumption}\label{assumption_derivative_process}
	For each $t\in\Z$, the process $u \mapsto \tilde X_t(u)$ is twice continuously differentiable with
	\[
	    \|\sup_{u\in[0,1]}|\partial_u \tilde X_t(u)|\|_M \le D, \quad\quad \|\sup_{u\in[0,1]}|\partial_u^2 \tilde X_t(u)|\|_M \le D
	\]
	Additionally, $g \in \sH(M,\chi,C)$ is twice continuously differentiable such that for all $i,j$, $\partial_{x_i} g \in \sH(\max\{M-1,1\},\chi,C \chi_i)$ and $\partial_{x_{i}x_{j}}^2 g \in \sH(\max\{M-2,1\},\chi,C \chi_i \chi_j)$.
\end{assumption}

The following theorem is a combination of Lemma 3.3  from the supplementary material of \cite{richterdahlhaus2018aos} and Lemmas \ref{lemma_statapprox}, \ref{lemma_variance_calc} from the Appendix.

\begin{theorem}[Bias expansion and MSE decomposition]\label{theorem_bias} Suppose that Assumption \ref{def_localstat} and Assumption \ref{assumption_derivative_process} are fulfilled for $q = 2M$. Let $K\in \sK$.

Define $\sigma_K^2 = \int K(x)^2 \dif x$, $\mu_K := \int K(x) x^2 \dif x$ and the long-run variance of $\tilde X_t(u)$ (see Theorem \ref{thm_asymptotic_normality} below),
\[
	\Sigma(u) = \sum_{k\in\IZ}\Cov(g(\tilde Y_0(u)),g(\tilde Y_k(u))).
\]
Then $u \mapsto G(u)$ is twice continuously differentiable and uniformly in $u \in [-\frac{h}{2},\frac{h}{2}]$, it holds that
\begin{enumerate}
    \item[(i)] \begin{equation}
	|\IE \hat G_h(u) - G(u) - \frac{h^2}{2}\mu_K \cdot \partial_u^2 G(u)|_2 = O((nh)^{-1}) + o(h^2),\label{formel_bias}
\end{equation}
    \item[(ii)] \begin{equation}
	\IE|\hat G_h(u) - G(u)|_2^2 = \frac{\sigma_K^2}{nh}\cdot \tr(\Sigma(u)) + \frac{h^4}{4}\mu_K^2 \cdot |\partial_u^2 G(u)|_2^2 +  o((nh)^{-1} + h^4).\label{formel_mse}
	\end{equation}
\end{enumerate}
\end{theorem}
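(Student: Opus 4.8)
The plan is to prove the two assertions of Theorem~\ref{theorem_bias} by first controlling the difference between the estimator built from the observed process $X_{t,n}$ and an idealized version built from the stationary approximations, and then carrying out a classical Taylor/kernel-smoothing expansion on the idealized quantity. Concretely, I would introduce the auxiliary statistic $\bar G_h(u) := \frac1n\sum_{t=1}^n K_h(t/n - u)\, g(\tilde Y_t(t/n))$, so that $\hat G_h(u) - \bar G_h(u)$ involves only the approximation error $g(Y_{t,n}) - g(\tilde Y_t(t/n))$. Using the $\sH(M,\chi,C)$ Lipschitz-type bound together with H\"older's inequality — exactly the argument behind the invariance Proposition — and Assumption~\ref{def_localstat} with $q = 2M$, one bounds $\|g(Y_{t,n}) - g(\tilde Y_t(t/n))\|_2 \le C' n^{-1}$ uniformly in $t$; since $\frac1n\sum_t K_h(t/n-u) = O(1)$, this gives $\|\hat G_h(u) - \bar G_h(u)\|_2 = O(n^{-1})$, which is absorbed into the $O((nh)^{-1})$ error term in both (i) and (ii). This is essentially the content of Lemma~\ref{lemma_statapprox} referenced in the statement, so I would simply invoke it.

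For part (i), the remaining task is to expand $\IE \bar G_h(u) = \frac1n\sum_{t=1}^n K_h(t/n-u)\, G(t/n)$. First I would establish that $u\mapsto G(u) = \IE g(\tilde Y_t(u))$ is twice continuously differentiable: this follows from Assumption~\ref{assumption_derivative_process}, differentiating under the expectation and using the chain rule $\partial_u g(\tilde Y_t(u)) = \sum_i \partial_{x_i} g(\tilde Y_t(u))\,\partial_u \tilde X_{t-i}(u)$, where the $\sH(\max\{M-1,1\},\chi,C\chi_i)$ membership of $\partial_{x_i} g$ combined with $\chi_i = O(i^{-2-\varepsilon})$, the moment bounds on the derivative processes, and $q = 2M$ justify the interchange and give a summable, integrable majorant (and similarly for the second derivative). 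Then a Riemann-sum approximation converts $\frac1n\sum_t K_h(t/n-u) G(t/n)$ into $\int \frac1h K(\frac{v-u}{h}) G(v)\dif v$ up to an $O((nh)^{-1})$ error (Lipschitz continuity of $K$ and of $G$, mesh $1/n$ scaled by bandwidth $h$), and a second-order Taylor expansion of $G$ around $u$, using $\int K = 1$, $\int K(x)x\dif x = 0$ by symmetry of the support argument — actually one only needs $\int K(x)x^2\dif x = \mu_K$ and a $o(1)$ remainder from uniform continuity of $\partial_u^2 G$ — yields $\int \frac1h K(\frac{v-u}{h})G(v)\dif v = G(u) + \frac{h^2}{2}\mu_K\,\partial_u^2 G(u) + o(h^2)$. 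Subtracting $G(u)$ gives~\reff{formel_bias}.

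For part (ii) I would write $\IE|\hat G_h(u) - G(u)|_2^2 = |\IE\hat G_h(u) - G(u)|_2^2 + \tr\big(\Cov(\hat G_h(u))\big)$. The squared-bias term equals $\frac{h^4}{4}\mu_K^2 |\partial_u^2 G(u)|_2^2 + o(h^4)$ directly from part~(i) (the cross term between the $O((nh)^{-1})$ and the $O(h^2)$ pieces is $o((nh)^{-1} + h^4)$). For the variance term, passing again to $\bar G_h$ at cost $O(n^{-1})$ in $L^2$ hence $O(n^{-2} + n^{-1}(nh)^{-1/2}\cdots)$ — more carefully, $\Cov(\hat G_h) = \Cov(\bar G_h) + O(n^{-3/2})$ type terms that are negligible — I would compute $\Cov(\bar G_h(u)) = \frac1{n^2}\sum_{s,t} K_h(s/n-u)K_h(t/n-u)\Cov(g(\tilde Y_s(s/n)), g(\tilde Y_t(t/n)))$. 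Here the key inputs are: (a) $u\mapsto \tilde X_t(u)$ varies smoothly, so $\Cov(g(\tilde Y_s(s/n)), g(\tilde Y_t(t/n)))$ can be replaced by the stationary covariance $\Cov(g(\tilde Y_0(u)), g(\tilde Y_{s-t}(u)))$ at a common point, with error controlled by the $|s/n - u|, |t/n-u| = O(h)$ Lipschitz bounds; (b) the decay of these covariances in $|s-t|$ — which comes from the functional-dependence / $L^q$-approximability structure underlying the framework of \cite{richterdahlhaus2018bernoulli}, ensuring $\sum_{k}|\Cov(g(\tilde Y_0(u)), g(\tilde Y_k(u)))| < \infty$ so that $\Sigma(u)$ is well-defined. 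Reindexing by $k = s-t$ and using $\frac1{n^2}\sum_{t} K_h(t/n-u)^2 \approx \frac1{nh}\int K^2 = \frac{\sigma_K^2}{nh}$ while $\frac1n\sum_t K_h(t/n-u)K_h((t+k)/n - u) \approx \frac1h\int K^2$ for fixed $k$ (the shift $k/n$ being negligible relative to $h$), the double sum collapses to $\frac{\sigma_K^2}{nh}\sum_{k\in\IZ}\tr\Cov(g(\tilde Y_0(u)), g(\tilde Y_k(u))) + o((nh)^{-1}) = \frac{\sigma_K^2}{nh}\tr(\Sigma(u)) + o((nh)^{-1})$; this is Lemma~\ref{lemma_variance_calc}. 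Combining the two terms yields~\reff{formel_mse}.

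The main obstacle is the variance computation in (ii): one must simultaneously (i) replace the nonstationary covariances by stationary ones at a single point $u$ with a quantitatively controlled error, (ii) exchange the order of the (infinite) lag sum and the double time-sum, which requires a uniform-in-$u$ summable bound on the covariance decay rather than mere finiteness of $\Sigma(u)$, and (iii) show the shift $k/n$ inside the kernel product is asymptotically harmless uniformly over the relevant range of $k$. All three hinge on a uniform covariance-decay estimate for $g(\tilde Y_k(u))$, which is exactly where the dependence structure of the hyper-model from \cite{richterdahlhaus2018bernoulli} (together with the invariance Proposition transferring it to $g(\tilde Y_t(u))$) does the real work; once that is in hand, the rest is bookkeeping with Riemann sums and Taylor expansions. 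Uniformity in $u\in[-h/2, h/2]$ is maintained throughout because all the error bounds above depend on $u$ only through the constants $D, C, |K|_\infty$ and the uniform moment and Lipschitz bounds, never on $u$ itself.
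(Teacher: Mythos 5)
Your proposal is correct and follows essentially the same route as the paper: reduce $\hat G_h$ to the stationary-approximation statistic (Lemma \ref{lemma_statapprox}), get the bias from a second-order Taylor expansion justified by Assumption \ref{assumption_derivative_process} and differentiation under the expectation, and get the variance from the long-run-variance computation of Lemma \ref{lemma_variance_calc}, which, as you correctly note, rests on a uniformly summable covariance decay supplied by the dependence structure rather than mere finiteness of $\Sigma(u)$. The only cosmetic difference is in (i): you take expectations first and Taylor-expand the deterministic curve $G$, while the paper Taylor-expands the random path $u \mapsto g(\tilde Y_t(u))$ with an integral remainder and then takes expectations (using dominated convergence for the $o(h^2)$ term); the ingredients, the needed interchange of derivative and expectation, and the error orders are identical.
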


From the decomposition (ii) in Theorem \ref{theorem_bias} it can be easily seen that $h \mapsto \IE |\hat G_h(u) - G(u)|_2^2$ is minimized by
\begin{equation}
    h_{opt}(u) = \Big(\frac{4 \sigma_K^2 \tr(\Sigma(u))}{\mu_K^2|\partial_u^2 G(u)|_2^2}\Big)^{1/5}\cdot n^{-1/5}\label{hopt_lokal}
\end{equation}
if $|\partial_u^2 G(u)|_2^2 > 0$. Accordingly, we obtain that $h \mapsto \int_0^{1}\IE|\hat G_h(u) - G(u)|_2^2\, w(u)\dif  u$ is minimized by
\begin{equation}
    h_{opt} = \Big(\frac{4 \sigma_K^2 \int_0^{1}\tr(\Sigma(u)) w(u) \dif u}{\mu_K^2\int_0^{1}|\partial_u^2 G(u)|_2^2 w(u) \dif u}\Big)^{1/5}\cdot n^{-1/5}\label{hopt_global}
\end{equation}
where $w:[0,1] \to [0,\infty)$ is some weight function taking care of boundary issues.

\subsection{Asymptotic normality}

To prove central limit theorems, we additionally have to assume mixing conditions. In the setting of Assumption \ref{def_localstat} it is enough to state these assumptions pointwise on the stationary approximations $\tilde X_t(u)$. An elegant way to formulate mixing assumptions was the introduction of the functional dependence measure of \cite{Wu2005}. Suppose that $\zeta_t$, $t\in\Z$ is a sequence of i.i.d. random variables, and put $\sF_t := (\zeta_t,\zeta_{t-1},...)$, $t \ge 0$. Let $\zeta_t^{*}$, $t\in \Z$ be an independent copy of $\zeta_t$, $t\in\Z$, and put $\sF_t^{*} := (\zeta_t,\zeta_{t-1},...,\zeta_1,\zeta_0^{*},\zeta_{-1},\zeta_{-2},...)$, $t \ge 0$.

\begin{assumption}\label{assumption_mixing_clt}
    Assume that for each $u\in[0,1]$, $\tilde X_t(u) = H(\sF_t,u)$ with some measurable function $H$. Suppose that
    \[
        \delta_q^{\tilde X(u)}(k) := \|\tilde X_t(u) - \tilde X_t^{*}(u)\|_q, \quad k \ge 0,
    \]
    fulfills $\sum_{k=0}^{\infty}\sup_{u\in[0,1]}\delta_q^{\tilde X(u)}(k) < \infty$.
\end{assumption}

The following theorem is due to Theorem 2.10 in \cite{richterdahlhaus2018bernoulli} and Theorem \ref{theorem_bias}:

\begin{theorem}[Asymptotic normality]\label{thm_asymptotic_normality} Suppose that Assumption \ref{def_localstat}, Assumption \ref{assumption_derivative_process} and Assumption \ref{assumption_mixing_clt} are fulfilled for $q = 2M$. Additionally, assume that $\|\sup_{u\in[0,1]}|\tilde X_t(u)|\|_{2M} < \infty$. Then, as $n\to\infty$, $h\to 0$ and $nh^5 = O(1)$,
\[
	\sqrt{nh}\big[\hat G_h(u) - G(u) - \frac{h^2}{2}\mu_K\cdot \partial_u^2 G(u)\big] \dto N(0, \sigma_K^2\cdot \Sigma(u)).
\]
\end{theorem}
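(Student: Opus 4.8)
The plan is to decompose $\hat G_h(u)-G(u)$ into a deterministic bias part, which is handled by Theorem~\ref{theorem_bias}, and a centred stochastic part, which is shown to be asymptotically Gaussian; the latter is essentially the statement of Theorem~2.10 in \cite{richterdahlhaus2018bernoulli}, so the work consists mainly in reducing to that result and assembling the pieces. First I would write
\[
\sqrt{nh}\big[\hat G_h(u)-G(u)-\tfrac{h^2}{2}\mu_K\,\partial_u^2 G(u)\big]=T_n+B_n,
\]
where $T_n:=\sqrt{nh}\,\big[\hat G_h(u)-\IE\hat G_h(u)\big]$ and $B_n:=\sqrt{nh}\,\big[\IE\hat G_h(u)-G(u)-\tfrac{h^2}{2}\mu_K\,\partial_u^2 G(u)\big]$. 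By Theorem~\ref{theorem_bias}(i), $|B_n|_2=O\big((nh)^{-1/2}\big)+o\big((nh^5)^{1/2}\big)$, which tends to $0$ since $nh\to\infty$ and $nh^5=O(1)$. It therefore suffices to prove $T_n\dto N(0,\sigma_K^2\Sigma(u))$, and by the Cramér--Wold device this reduces to the one-dimensional statement for $\langle\lambda,T_n\rangle$, $\lambda\in\IR^d$; note $x\mapsto\langle\lambda,g(x)\rangle$ still lies in $\sH(M,\chi,C')$ for a suitable $C'$.

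Next, writing $T_n=\frac{\sqrt{nh}}{n}\sum_{t=1}^{n}K_h(t/n-u)\big[g(Y_{t,n})-\IE g(Y_{t,n})\big]$, I would replace the nonstationary quantity $g(Y_{t,n})$ by the stationary approximation anchored at $u$, comparing $T_n$ with $S_n:=\frac{\sqrt{nh}}{n}\sum_{t=1}^{n}K_h(t/n-u)\big[g(\tilde Y_t(u))-\IE g(\tilde Y_t(u))\big]$. By the invariance principle stated above (applied with index $q/M=2$) together with the Lipschitz condition on $u\mapsto\tilde X_t(u)$ and $g\in\sH(M,\chi,C)$, the increments $\Delta_t:=g(Y_{t,n})-g(\tilde Y_t(u))$ satisfy $\|\Delta_t\|_2=O(n^{-1}+|t/n-u|)=O(h)$ on $\supp K_h(\cdot-u)$. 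A first-moment bound on $T_n-S_n$ would be too crude, so I would instead estimate the second moment,
\[
\IE|T_n-S_n|_2^2=\frac{h}{n}\sum_{s,t=1}^{n}K_h(s/n-u)K_h(t/n-u)\,\Cov(\Delta_s,\Delta_t),
\]
bounding the covariances by combining the smallness $\|\Delta_t\|_2=O(h)$ on the support with the decay in $|s-t|$ coming from the dependence structure of Assumption~\ref{assumption_mixing_clt}, so that $\IE|T_n-S_n|_2^2\to0$. Hence $T_n$ and $S_n$ have the same limit in distribution.

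It remains to show $S_n\dto N(0,\sigma_K^2\Sigma(u))$, which is exactly Theorem~2.10 in \cite{richterdahlhaus2018bernoulli}: under Assumptions~\ref{def_localstat}, \ref{assumption_derivative_process}, \ref{assumption_mixing_clt} with $q=2M$ and $\|\sup_{u\in[0,1]}|\tilde X_t(u)|\|_{2M}<\infty$, the stationary sequence $(g(\tilde Y_t(u)))_{t\in\IZ}$ has summable functional dependence measure --- hence a well-defined long-run covariance $\Sigma(u)$ --- and finite second moments, and its normalised kernel-weighted partial sums are asymptotically Gaussian. If one were to prove this directly, the two ingredients would be: (a) the variance asymptotics $\Var(S_n)\to\sigma_K^2\Sigma(u)$, obtained via $\frac{h}{n}\sum_{t}K_h(t/n-u)^2\to\int K(x)^2\dif x=\sigma_K^2$ and summation of the lag-$k$ autocovariances of $g(\tilde Y_\cdot(u))$ to $\Sigma(u)$ (this is Lemma~\ref{lemma_variance_calc}); and (b) asymptotic normality by an $m$-dependent approximation --- replace $g(\tilde Y_t(u))$ by $\IE\big[g(\tilde Y_t(u))\mid\zeta_t,\dots,\zeta_{t-m}\big]$, whose error is controlled by $\sum_{k>m}\sup_{u}\delta_2^{\tilde X(u)}(k)$, apply a big-block/small-block (Bernstein) or Lindeberg--Feller CLT to the resulting row-wise $m$-dependent triangular array --- the Lindeberg condition being immediate because each weight is $O((nh)^{-1/2})$ and $nh\to\infty$ --- and then let $m\to\infty$. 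A Slutsky argument combines the Gaussian limit of $S_n$ with the negligibility of $B_n$ and of $T_n-S_n$.

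The main obstacle is the interplay of the stationary-approximation replacement with the dependent-array CLT while the kernel weights are present throughout: one must show that the replacement error vanishes at the $\sqrt{nh}$ scale through a genuine second-moment (not first-moment) estimate, and check that the $m$-dependent approximation plus blocking delivers precisely the limiting covariance $\sigma_K^2\Sigma(u)$ and a Gaussian law. In the present paper all of this is already packaged in Theorem~2.10 of \cite{richterdahlhaus2018bernoulli}, so the remaining task is only to verify its hypotheses (all granted by the assumptions of the theorem) and to add the bias correction supplied by Theorem~\ref{theorem_bias}.
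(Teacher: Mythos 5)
Your proposal is correct and follows exactly the route the paper takes: the paper's proof of this theorem is precisely the combination of the bias expansion in Theorem \ref{theorem_bias}(i) (which makes the $\sqrt{nh}$-scaled bias remainder $O((nh)^{-1/2})+o((nh^5)^{1/2})\to 0$) with the CLT of Theorem 2.10 in \cite{richterdahlhaus2018bernoulli} for the centred kernel-weighted sums, assembled by Slutsky. Your additional sketch of how Theorem 2.10 itself would be proved ($m$-dependent approximation, blocking, variance calculation) goes beyond what the paper records but is consistent with it.
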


\section{Global adaptive bandwidth selection: Cross Validation}
\label{sec3}

In the following we discuss a global bandwidth selection method for $\hat G_h(u)$. The goal is to find an estimator which minimizes
\[
	d_M(h) := \int_0^{1}|G(u) - \hat G_h(u)|_2^2 \, w(u) \dif u,
\]
where $w:[0,1] \to [0,\infty)$ is some weight function of bounded variation which takes care of the boundary effects. Typically, $w(\cdot) = \Ii_{[\gamma,1-\gamma]}(\cdot)$ with some $\gamma>0$.  
In practice this means to find an estimator $\hat h$ which is close to $h_{opt}$ from \reff{hopt_global}.

An important advantage of cross validation over other methods is that it allows bandwidth selection without introducing sensitive tuning parameters. Typically, cross validation works well for global bandwidth selection but gets instable due to its high variance if one wants to use it locally. We therefore only present a theory for global selection. The next chapter discusses local bandwidth selection which aims to find an estimator for $h_{opt}(u)$ for each $u\in[0,1]$ separately.

%Cross validation is a tool to get a first estimate of $h_{opt}$. 
%can be used to get a 'first guess' of $h_{opt}$ which may be refined by using more sophisticated procedures  like plug-in estimation. 

%We present two approaches to select $h$ from the estimators given in \reff{model_estimator}. While the first method allows for global bandwidth selection (one $h$ for all $u\in[0,1]$) and is based on a cross validation, the second is based on a contrast minimization approach and allows for local bandwidth selection (one bandwidth $h = h(u)$ for each $u\in[0,1]$).

The method presented here is based on the following idea: $G(u) = \IE g(\tilde Y_0(u))$ is a minimizer of the functional
\[
	a \mapsto \IE\big[ \big(g(\tilde Y_0(u)) - a\big)^2\big].
\]
Therefore we expect $G$ to be a minimizer of the empirical version
\[
	H(G) := \frac{1}{n}\sum_{t=1}^{n}\big| g(Y_{t,n}) - G(t/n)\big|_2^2\, w(t/n).
\]
 It turns out that the simple plug-in approach to minimize $h \mapsto H(\hat G_h)$ does not work. The reason being that in this case, $H(\hat G_h)$ is no unbiased estimator of $\int_0^{1}|g(\tilde Y_t(u)) - \hat G_h(u)|_2^2\, w(u) \dif u$.
 
To obtain an unbiased estimator, we have to eliminate the dependencies which occur between the observation $g(Y_{t,n})$ and $\hat G_h(t/n)$ which leads to a natural change of the estimator $\hat G_h$:
Define for $\alpha \in (0,1)$, $\varepsilon > 0$,
\[
	K^{(n)}(x) := \begin{cases}
		K(x), & |x| \ge n^{-\alpha},\\
		0, & |x| \le (1-\varepsilon) n^{-\alpha},\\
		\text{linear}, & (1-\varepsilon) n^{-\alpha} < |x| < n^{-\alpha},
	\end{cases}
\]
as a Lipschitz-continuous approximation of $K(x)\Ii_{\{|x| \ge n^{-\alpha}\}}$, and
\[
	\hat G_h^{-}(u) := \Big(\frac{1}{n}\sum_{t=1}^{n}K_h^{(n)}(t/n-u)\Big)^{-1}\cdot \frac{1}{n}\sum_{t=1}^{n}K_h^{(n)}(t/n-u) \cdot g(Y_{t,n}).
\]
Note that $\hat G_h^{-}(t/n)$ and $g(Y_{t,n})$ are now (approximately) uncorrelated if the sequence $g(Y_{t,n})$, $t = 1,...,n$ fulfills appropriate dependence conditions.

We now define $\hat h$ accordingly to our original idea as
\begin{equation}
	\hat h := \argmin_{h\in H_n} H(\hat G_h^{-}).\label{def_crossval}
\end{equation}
The final estimator of $G$ is then given by $\hat G_{\hat h}$.

To judge the quality of $\hat G_{\hat h}$, we use the mean-squared error distance $d_M(h).$

%define the following distance measure which is a natural approximation of $\int |G(u) - \hat G_h(u)|_2^2 w(u) \dif u$:
%\[
%	d_A(h) := \frac{1}{n}\sum_{t=1}^{n}|G(t/n) - \hat G_h(t/n)|_2^2 w(t/n).
%\]
%\[
%	d_M(h) := \int_0^{1}|G(u) - \hat G_h(u)|_2^2 w(u) \dif u.
%\]

The following theorem states that $\hat h$, chosen by the cross validation procedure \reff{def_crossval}, is asymptotically optimal in the sense that $\hat h$ (i.e. the estimator $\hat G_{\hat h}$ associated to $\hat h$) attains the minimal distance to $G$ with respect to $d_A$ over all possible bandwidths $h\in H_n$.

\begin{theorem}[Asymptotic optimality of the bandwidth selector $\hat h$]\label{thm_crossval}$\,$\\
Suppose that Assumption \ref{def_localstat} holds for all $q > 0$ with some constants $D_q > 0$. Suppose that
$\sup_{u\in[0,1]}\delta^{\tilde X_0(u)}_q(k) = O(k^{-\kappa})$ with some $\kappa > 3$, and $g \in \sH(M,\chi,C)$ with some $\chi = (\chi_i)_{i\in\N}$ with $\chi_i = O(i^{-\kappa})$.\\
Let $K \in \sK$. For arbitrary small $\eta > 0$, let $H_n = [n^{-1+\alpha+\eta},n^{\min\{2\alpha-1,0\}-\eta}]$. Assume that the support of $w$ is $[\gamma,1-\gamma]$ with some $\gamma > 0$.
Then almost surely,
\[
	\lim_{n\to\infty}\frac{d_M(\hat h)}{\inf_{h\in H_n}d_M(h)} = 1.
\]
\end{theorem}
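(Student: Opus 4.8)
The plan is to follow the classical Hall--Marron--Wong argument for cross-validation optimality, adapted to the locally stationary setting. The key identity is the decomposition
\[
	H(\hat G_h^{-}) = \frac{1}{n}\sum_{t=1}^n |g(Y_{t,n})|_2^2 w(t/n) - \frac{2}{n}\sum_{t=1}^n \langle g(Y_{t,n}), \hat G_h^{-}(t/n)\rangle w(t/n) + \frac{1}{n}\sum_{t=1}^n |\hat G_h^{-}(t/n)|_2^2 w(t/n).
\]
The first term does not depend on $h$, so minimizing $H(\hat G_h^{-})$ is equivalent to minimizing the sum of the last two terms. One then writes $g(Y_{t,n}) = G(t/n) + (g(Y_{t,n}) - G(t/n))$ and expands. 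The ``diagonal'' cross terms involving the product of the centered noise $g(Y_{t,n})-G(t/n)$ with itself are exactly what the leave-one-out modification $K^{(n)}$ is designed to kill: because $\hat G_h^{-}(t/n)$ puts (essentially) no weight on indices $s$ with $|s/n - t/n| < n^{-\alpha}$, the covariance between $g(Y_{t,n})$ and $\hat G_h^{-}(t/n)$ is negligible. What remains should be $d_M(h)$ plus an $h$-independent term plus a remainder $R_n(h)$, so that asymptotic optimality reduces to showing $\sup_{h\in H_n}|R_n(h)|/d_M(h) \to 0$ almost surely.

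\textbf{Key steps, in order.} First I would show that $\hat G_h^{-}$ and $\hat G_h$ are uniformly close: $\sup_{h\in H_n}\sup_{u\in[\gamma,1-\gamma]}|\hat G_h^{-}(u) - \hat G_h(u)|_2$ is of smaller order than the optimal rate, using that the modification only affects an $O(n^{-\alpha}/h)$ fraction of the kernel mass and that $h \ge n^{-1+\alpha+\eta}$. This lets me replace $\hat G_h^{-}$ by $\hat G_h$ in $d_M$. Second, I would establish the exact bias-variance decomposition of $d_M(h)$ uniformly over $H_n$, which is essentially Theorem \ref{theorem_bias}(ii) integrated against $w$, together with a uniform-in-$h$ strengthening; here the Bernstein inequality mentioned in the introduction enters, applied to the process $g(Y_{t,n})$ (which is again locally stationary by the invariance Proposition, since $g\in\sH(M,\chi,C)$), to control stochastic terms uniformly over the (polynomially large) grid $H_n$. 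Third, I would bound the cross term $n^{-1}\sum_t \langle g(Y_{t,n})-G(t/n),\ \hat G_h^{-}(t/n) - G(t/n)\rangle w(t/n)$: split $\hat G_h^{-} - G$ into its bias part (deterministic, handled directly) and its stochastic part (a weighted average of the noise with the diagonal removed), and show via a moment bound / Bernstein argument that this is $o(d_M(h))$ uniformly in $h$. Fourth, collect terms: $H(\hat G_h^{-}) = \text{const} + d_M(h)(1 + o(1))$ uniformly over $H_n$, whence $\hat h$ and any near-minimizer of $d_M$ differ negligibly, giving $d_M(\hat h)/\inf_{h\in H_n}d_M(h)\to 1$.

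\textbf{Main obstacle.} The hard part will be the uniformity over the continuum (or fine grid) $H_n$ of bandwidths, combined with the weak moment assumptions. Unlike the i.i.d. regression case, here $g(Y_{t,n})$ is a dependent, nonstationary array whose summability is only controlled through the functional dependence measure with $\kappa > 3$; getting the cross term and the variance term to concentrate \emph{uniformly} over all $h\in H_n$ almost surely requires a chaining argument over $h$ together with the general Bernstein inequality for locally stationary processes, and care that the constants in the Bernstein bound do not blow up as $h$ ranges over $H_n$. A secondary technical nuisance is controlling the normalizing denominator $(n^{-1}\sum_t K_h^{(n)}(t/n-u))^{-1}$ uniformly near the boundary of $[\gamma,1-\gamma]$ and verifying it stays bounded away from zero; this is why the support restriction on $w$ and the range of $\alpha$ in the definition of $H_n$ are imposed. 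Once the uniform concentration is in place, the remaining bookkeeping — matching leading constants, absorbing $o$-terms — is routine.
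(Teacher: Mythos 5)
Your plan follows essentially the same route as the paper's proof: the paper likewise reduces to the classical cross-validation decomposition $2[H(\hat G_h^{-}) - H(G)] = d_{A,-}(h) + S_{n,h}$, shows the averaged squared error agrees with $d_M(h)$ uniformly over $H_n$ (via chaining over $h$), and uses the diagonal-removed kernel $K^{(n)}$ together with the dependence decay to make the cross term uniformly negligible — the one genuinely new computation being that, since the noise $g(Y_{t,n})-G(t/n)$ is correlated, the expectation of the cross term must be bounded directly, giving $|\IE \tilde S_{n,h}| = O\big((n^{-\alpha}nh)^{-\kappa+1}/(nh)\big)$, which is exactly the role your $n^{-\alpha}$-gap observation plays and is where the lower bound $h \ge n^{-1+\alpha+\eta}$ enters. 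The paper imports most of the uniform-concentration steps from \cite{richterdahlhaus2018aos} by casting the problem as M-estimation with quadratic loss rather than re-deriving them via Bernstein/chaining from scratch, but the structure coincides with yours.
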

\begin{remark}
\begin{itemize}
	\item Choice of $\alpha$: From a theoretical point of view, it is possible to choose $\alpha \in (0,1)$ very near to 1. In practice however it may lead to more stable choices to take $\alpha$ smaller, for instance $\alpha \approx \frac{1}{2}$. This is discussed in more detail in the simulations below.
	\item Choice of $\varepsilon$: While it is necessary for theoretical proofs to choose $\varepsilon>0$ (so that $K^{(n)}$ is still Lipschitz continuous), in practice there seems to be no drawback when using $\varepsilon=0$.
	\item Choice of $H_n$: In practice it is not necessary to bound $H_n$ from above. However, to obtain meaningful bandwidth selections it is necessary to implement the lower bound given in the conditions of Theorem \ref{thm_crossval} or directly restrict the search to local minima.
\end{itemize}
\end{remark}

By using the representation \reff{formel_mse} integrated over $u \in [0,1]$, we directly obtain
\begin{corollary}
    Under the assumptions of Theorem \ref{thm_crossval}, almost surely
    \[
        \frac{\hat h}{h_{opt}} \to 1,
    \]
    where $h_{opt}$ is defined in \reff{hopt_global}.
\end{corollary}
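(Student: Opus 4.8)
The plan is to deduce the corollary from Theorem \ref{thm_crossval} by replacing, uniformly in $h\in H_n$, the random quantity $d_M(h)$ by its deterministic leading term and then exploiting the convexity of that leading term around its minimizer. Concretely, I would set $\bar d_M(h):=\IE d_M(h)=\int_0^1\IE|\hat G_h(u)-G(u)|_2^2\,w(u)\dif u$ and, integrating the MSE decomposition \reff{formel_mse} over $u$ against $w$, write
\[
\bar d_M(h)=\Phi_n(h)\,(1+o(1)),\qquad \Phi_n(h):=\frac{\sigma_K^2\,A}{nh}+\frac{\mu_K^2\,B}{4}\,h^4,
\]
with $A:=\int_0^1\tr(\Sigma(u))\,w(u)\dif u$ and $B:=\int_0^1|\partial_u^2 G(u)|_2^2\,w(u)\dif u$; here $B>0$ is tacitly assumed (otherwise $h_{opt}$ in \reff{hopt_global} is undefined). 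The one point requiring care is that the remainder $o((nh)^{-1}+h^4)$ in \reff{formel_mse} be controlled uniformly for $h$ ranging over the shrinking interval $H_n$; since every bandwidth in $H_n$ tends to $0$ this is routine but should be recorded.

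Next comes the deterministic core. A direct computation shows that $\Phi_n$ is strictly convex on $(0,\infty)$ and uniquely minimized at $h_{opt}$ from \reff{hopt_global}, with $h_{opt}\asymp n^{-1/5}$; moreover, whatever the constants $A,B$, setting $r:=h/h_{opt}$ one obtains the universal expression
\[
\frac{\Phi_n(h)}{\Phi_n(h_{opt})}=\psi(r),\qquad \psi(r):=\frac{4}{5r}+\frac{r^4}{5},
\]
where $\psi$ is continuous, strictly convex, has its unique minimum $\psi(1)=1$ at $r=1$, and is coercive ($\psi(r)\to\infty$ as $r\to 0^+$ or $r\to\infty$). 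For the admissible range of $\alpha$ (in particular $\alpha$ near $\tfrac12$) one also checks $h_{opt}\in H_n$, so that the bandwidth in \reff{hopt_global} indeed competes in $\inf_{h\in H_n}d_M(h)$.

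Finally I would invoke the uniform concentration $\sup_{h\in H_n}|d_M(h)/\bar d_M(h)-1|\to 0$ almost surely — supplied by the uniform laws of large numbers / the Bernstein inequality of the Appendix on which the proof of Theorem \ref{thm_crossval} rests, so I would cite it rather than reprove it. Combined with the first step this gives $d_M(h)=\Phi_n(h)(1+o(1))$ a.s. uniformly in $h\in H_n$, hence $\inf_{h\in H_n}d_M(h)=\Phi_n(h_{opt})(1+o(1))$. Evaluating the uniform relation at the (random) bandwidth $\hat h$ and dividing, Theorem \ref{thm_crossval} yields
\[
\psi\!\Bigl(\frac{\hat h}{h_{opt}}\Bigr)=\frac{\Phi_n(\hat h)}{\Phi_n(h_{opt})}=\frac{d_M(\hat h)}{\inf_{h\in H_n}d_M(h)}\,(1+o(1))\longrightarrow 1\quad\text{a.s.}
\]
Since $\psi$ is continuous with a strict global minimum at $1$ and coercive, every sequence $(r_n)$ with $\psi(r_n)\to 1$ satisfies $r_n\to 1$; applying this with $r_n=\hat h/h_{opt}$ completes the proof.

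The step I expect to be the main obstacle is the uniformity in the last paragraph: the whole argument rests on replacing $d_M(h)$ by the deterministic $\Phi_n(h)$ \emph{simultaneously} for all $h\in H_n$, and in particular at the data-dependent $\hat h$, about which a priori we know only $\hat h\in H_n$. Fortunately this uniform control is precisely what the proof of Theorem \ref{thm_crossval} already provides, so the corollary is genuinely ``direct''; the only genuinely new bookkeeping is (i) checking that the remainder in \reff{formel_mse} is uniform over $H_n$ and (ii) verifying $h_{opt}\in H_n$ and $B>0$.
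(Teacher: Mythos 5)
Your proposal is correct and follows essentially the same route as the paper, whose entire proof of the corollary is the remark that one integrates the representation \reff{formel_mse} over $u$ and concludes from Theorem \ref{thm_crossval}; your write-up simply makes explicit the details the paper leaves implicit (uniformity of the expansion over $H_n$, the a.s.\ uniform equivalence of $d_M$ with its deterministic counterpart already contained in the proof of Theorem \ref{thm_crossval}, and the strict, coercive minimum of the rescaled ratio $\psi$). Note only that your identity $\Phi_n(h)/\Phi_n(h_{opt})=\psi(h/h_{opt})$ is with respect to the exact argmin of $\Phi_n$, so one should check it matches the constant displayed in \reff{hopt_global}.
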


\begin{remark}[Cross validation for compositions of moment estimators]\label{remark_correlation}
Many common estimators are given as compositions of moment estimators, a prominent example being the autocorrelation function of lag 1,
\[
    \gamma(u) := \frac{c(u,1)}{c(u,0)}, \quad\quad c(u,k) = \IE \tilde X_0(u) \tilde X_k(u).
\]
A natural way to estimate $\gamma(u)$ is to estimate $c(u,0)$, $c(u,1)$ by applying the above cross validation method and afterwards calculating $\hat \gamma(u)$ from the corresponding estimators $\hat c(u,0)$, $\hat c(u,1)$. It is obvious that this may not lead to a good result if the nature of $c(u,0)$ and $c(u,1)$ is very different from $\gamma(u)$ itself. Here, we give a brief sketch how to generalize our method to this case.

Generally speaking, we are interested in estimating $F(G(u))$ where  $F:\R^d \to \R^{\tilde d}$ is some given function. The obvious generalization of \reff{def_crossval} by defining $\tilde H(G) = \frac{1}{n}\sum_{t=1}^{n}\big| F(g(Y_{t,n})) - F(G(t/n))\big|_2^2 w(t/n)$ is not feasible due to exploding values of $F(g(Y_{t,n}))$. Instead, we approximate $F(g(Y_{t,n})) - F(G(t/n))$ by a Taylor expansion of order one and define
\begin{eqnarray}
    \tilde H(G) &:=&  \frac{1}{n}\sum_{t=1}^{n}\big|\partial F(G(t/n))\cdot (g(Y_{t,n}) - G(t/n))\big|_2^2 w(t/n)\nonumber\\
    &=& \frac{1}{n}\big|g(Y_{t,n}) - G(t/n)\big|_{F(G(t/n))^t F(G(t/n))}^2 w(t/n),\label{cv_modified}
\end{eqnarray}
where $|x|_A^2 := x^t A x$ for some vector $x$ and matrix $A$. We then define, as before,
\[
    \hat h^{comp} := \argmin_{h\in H_n}\tilde H(\hat G_h^{-}).
\]
From \reff{cv_modified} it can be seen that instead of finding a minimizer of $\int_0^{1}|\hat G_h(u) - G(u)|_2^2 w(u) \dif u$, we now minimize a weighted combination $\hat G_h(u) - G(u)$ (weighted by the matrix $F(G(u))^t F(G(u))$) which introduces the specific nature of the function $F$. In view of techniques from \cite{richterdahlhaus2018aos}, we conjecture that a similar result as in Theorem \ref{thm_crossval} holds for $\hat h^{comp}$ and a modified distance measure $d_M^{comp}(h):= \int_0^{1}|F(G(u)) - F(\hat G_h(u))|_2^2 w(u) \dif u$.
\end{remark}

\begin{remark}[Cross validation for functional moment estimators]\label{remark_characteristicfunction} Suppose that instead of $G(u) = \IE g(\tilde Y_t(u))$, we are interested in estimating $G_{\theta}(u) := \IE g_{\theta}(\tilde Y_t(u))$ uniformly in $\Theta$, where $g_{\theta}:\IR^{\N} \to \IR^d$ and $\theta \in \Theta \subset \IR^{\tilde d}$ with $\int_{\Theta} 1 \dif \theta > 0$ (which excludes non-unique parametrizations). A prominent example being the characteristic function of $\tilde X_0(u)$,
\[
    \phi(u,\theta) := \IE e^{i \theta \tilde X_0(u)},
\]
cf. \cite{leucht2016} ($i$ being the imaginary unit). The cross validation procedure from \reff{def_crossval} can be easily modified to cover such cases by using
\[
    \bar H(G) := \int_{\Theta}H(G_{\theta}) \dif \theta, \quad\quad \hat h^{fun} := \argmin_{h\in H_n}\bar H(\hat G_{\theta,h}^{-}).
\]
Note that here, $\hat G_{\theta,h}^{-}$ naturally depends on $\theta$ through $g_{\theta}$. It is straightforward to show that the following modification of Theorem \ref{thm_crossval} holds if the conditions therein are fulfilled uniformly in $\theta$: Almost surely,
\[
    \lim_{n\to\infty}\frac{d_M^{fun}(h)}{\inf_{h\in H_n}d_M^{fun}(h)}=1,
\]
where $d_M^{fun}(h) := \int_{\Theta} \int_0^{1}|G(u) - \hat G_h(u)|_2^2 w(u) \dif u \dif \theta$.
\end{remark}

\subsection{Simulations}

Since our estimators are model-free, we only discuss the behavior of the method if the underlying time series model is a tvAR(1) process given by
\begin{equation}
    X_{t,n} = a(t/n)\cdot X_{t-1,n} + \zeta_t,\quad t = 1,...,n,\label{model_simulations}
\end{equation}
where $\zeta_t$ are i.i.d. $N(0,1)$ and $a(u) = \sin(2\pi u)$. We have performed similar simulations with tvARCH- and tvMA models leading to similar results. We discuss the following three quantities:
\begin{enumerate}[(a)]
    \item $c(u,1) = \IE \tilde X_0(u) \tilde X_{1}(u)$,
    \item $\gamma(u) = \frac{c(u,1)}{c(u,0)}$ (cf. Remark \ref{remark_correlation}),
    \item $\phi(u,\theta) = \IE \cos(\theta u)$ (cf. Remark \ref{remark_characteristicfunction}, we restrict to the real part for simplicity)
\end{enumerate}
by using the introduced cross validation methods. In all simulations, we use $w(\cdot) = \Ii_{[0.05,0.95]}(\cdot)$ and the Epanechnikov kernel $K(x) = \frac{3}{2}(1-(2x)^2)\Ii_{[-\frac{1}{2},\frac{1}{2}]}(x)$. We use a time series length of $n = 500$ of model \reff{model_simulations} and the set of bandwidths $H_n = \{\frac{k}{50}, k=1,...,50\}$.

\subsubsection{Estimation of $c(u,1)$}
We simulate $N = 2000$ replications and use   $n^{-\alpha} = 0.12$. %\in \{0.08,0.10,0.12\}$.
We choose $\hat h$ to be the largest local minimum of $h \mapsto H(\hat G_h^{-})$.
%which occurs for one the three values of $\alpha$ to avoid instabilities due to nonasymptotic behavior.
In Figure \ref{figure_cv_c1_h} (left) we have plotted a histogram of the bandwidths chosen by our algorithm, together  with the deterministic bandwidth $h_{opt}$ from \reff{hopt_global} which minimizes $\IE d_M(h)$ (and is not available in practice). We observe that $\hat h$ concentrates around $h_{opt}$ with Gaussian shape.
To judge the performance of our procedure, we compare the achieved distances $d_M(h)$ for $h \in \{\hat h, h_{opt}, h^{*}\}$, where $h^{*} := \argmin_{h\in H_n}d_M(h)$ is the bandwidth which minimizes $d_M(h)$ on the current realization. In Figure \ref{figure_cv_c1_h} (right), we have visualized the results with a boxplot. We can see that $\hat h$ performs quite well, the values of $d_M(\hat h)$ are comparable to those of $d_M(h^{*})$.

\begin{figure}
    \centering
    \begin{tabular}{cc}
        \includegraphics[width=5.2cm]{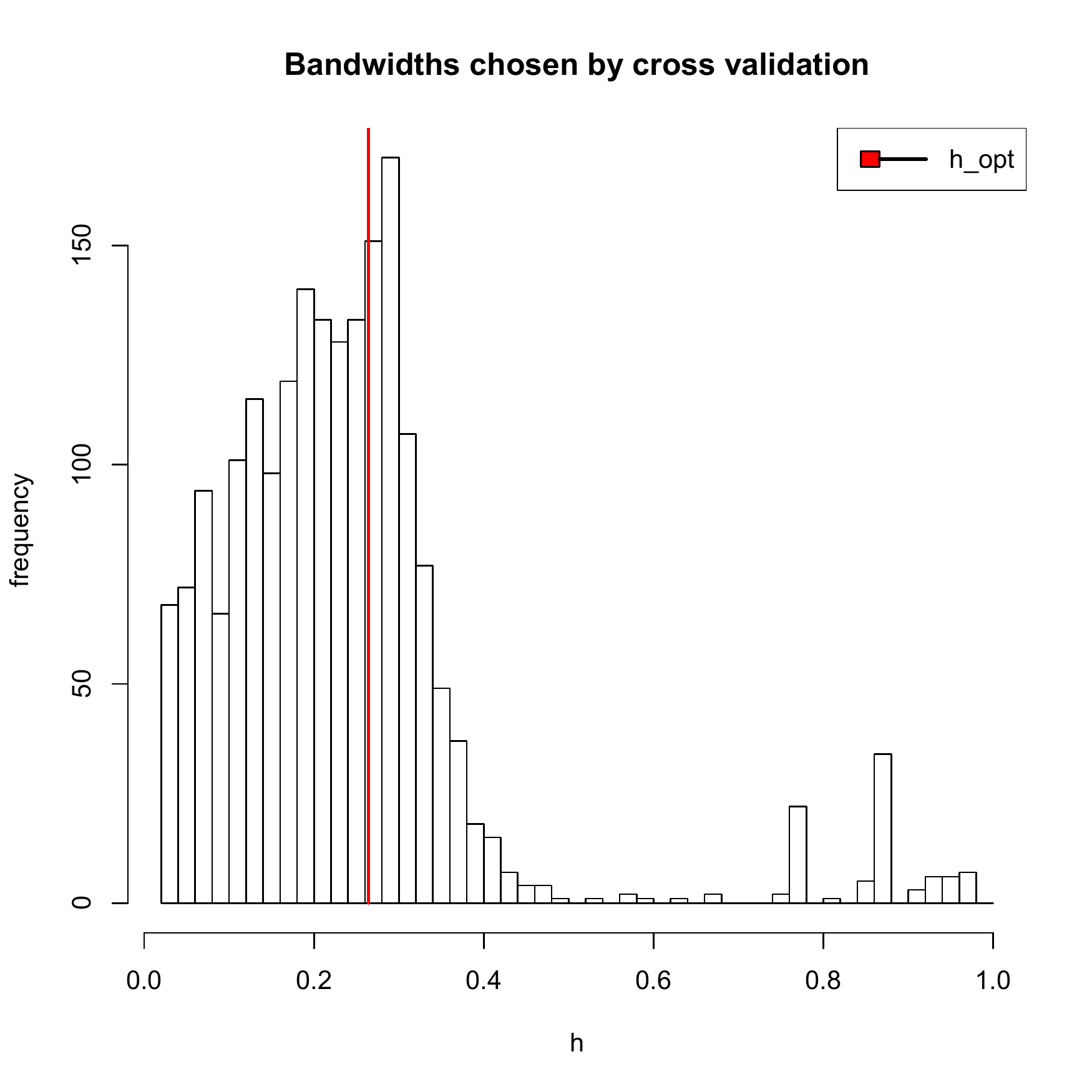} & \includegraphics[width=5.2cm]{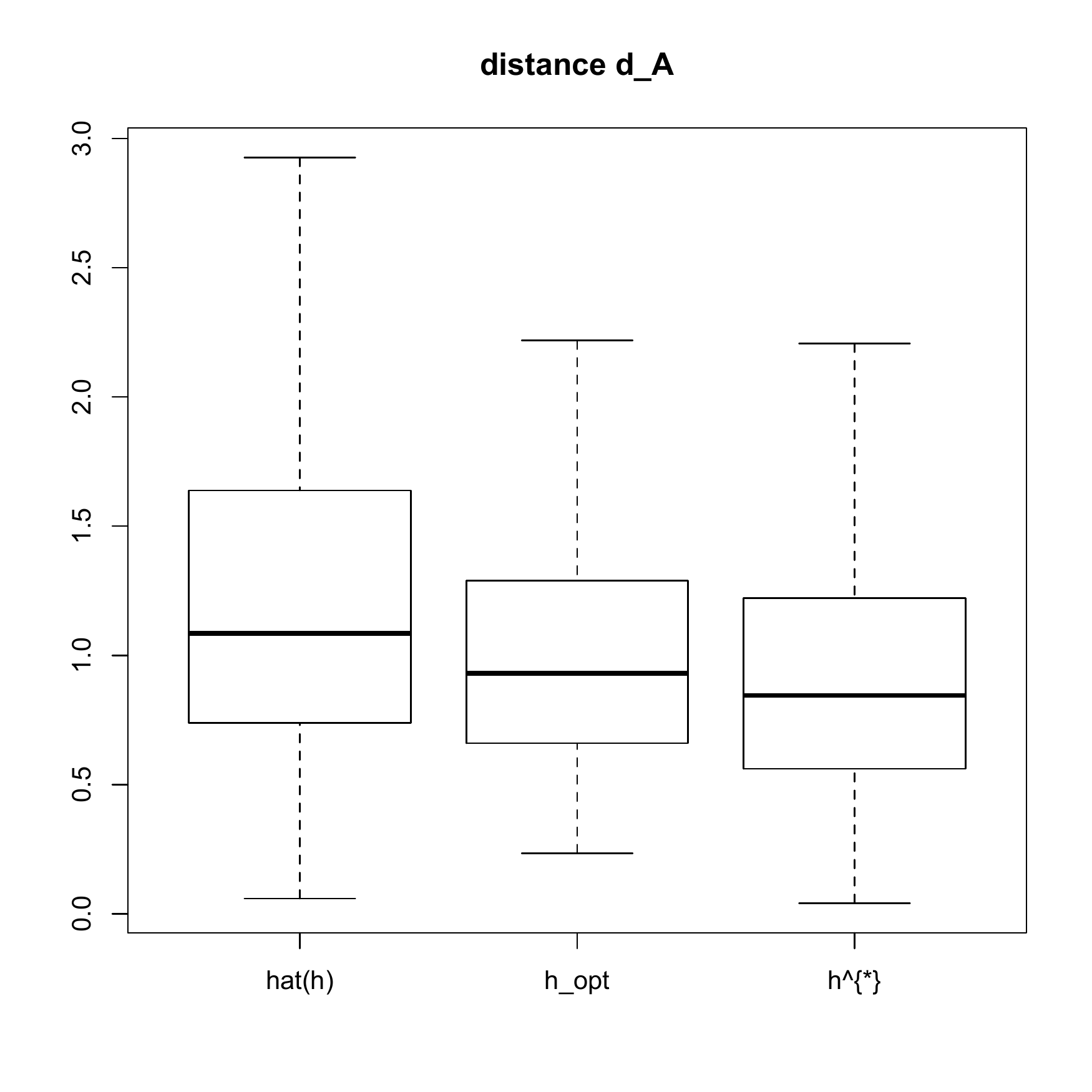}
    \end{tabular}
    \caption{Results for $\hat G_{\hat h}(u)$ used to estimate $c(u,1)$ in the tvAR(1) model \reff{model_simulations}. Left: Histogram of the bandwidths $\hat h$ obtained by cross validation. Right: Boxplot of the distances $d_M(h)$ obtained for $h \in \{\hat h, h_{opt}, h^{*}\}$.}
    \label{figure_cv_c1_h}
\end{figure}

More satisfying results could be obtained by choosing $\alpha$ dependent on the realization. For a fixed realization, it may occur due to nonasymptotic behavior that a single fixed $\alpha$ does lead to cross validation functional $H(\hat G_h^{-})$ (cf. \reff{def_crossval}) which has no local minima. In practice it is therefore recommended to apply the procedure to different $\alpha$ covering the whole interval $(0,1)$ and search for a suitable minimizer to obtain more stable bandwidth choices. In Figure \ref{figure_cv_c1_diffalpha} we have shown the curves $h \mapsto H(\hat G_h^{-})$ obtained for different $\alpha$ such that $n^{-\alpha}$ is ranging from $0.01$ to $0.35$ in steps of 0.01. From the definition of $H(G)$ or $\tilde{H}(G)$ it is obvious that one should look for a local minimum of $H(\hat G_h^{-})$ with respect to $h$ and not for a global minimum (the latter would result in very small values of $h$ such as $h=0.02$ and therefore to an overfitting of the observations $g(Y_{t,n})$). It can be seen in Figure \ref{figure_cv_c1_diffalpha} that for some $\alpha$ no local minimum is available. The whole collection of functions, however, gives a well-founded indication which $h$ to choose, namely an $h$ close to 
$0.08$ which corresponds to the smallest local minimum of $H(\hat G_h^{-})$. Smaller $\alpha$ with a reasonable shape of $H(\hat G_h^{-})$ are preferred since then more observations in the direct neighbourhood of $u$ are used to estimate $\hat G_h^{-}(u)$. This has lead to the choice $n^{-\alpha} = 0.3$ and the resulting local minimum $\hat{h}=0.08$ (from a practical point of view, one may also consider to choose the second local minimum $h \approx 0.3$ where the situation seems to be more stable. This leads to similar results as above).

%such that $n^{-\alpha}$ is ranging from $0.01$ to $0.35$ in steps of 0.01. It can be seen that even if for a specific $\alpha$ no local minimum is available, the whole collection of functions gives a well-founded indication which $h$ to choose. Smaller $\alpha$ with a reasonable shape of $H(\hat G_h^{-})$ are preferred since then the more observations in the direct neighbourhood of $u$ are used to estimate $\hat G_h^{-}(u)$. As it can be seen in Figure \ref{figure_cv_c1_diffalpha}, $h\mapsto H(\hat G_h^{-})$ decreases with smaller $h \in H_n$.  Theorem \ref{thm_crossval} asks for a lower bound placed on $H_n$ which in practice corresponds to search for local minima instead of global minima of $h \mapsto H(\hat G_h^{-})$.

\begin{figure}
    \centering
    \includegraphics[width=7cm]{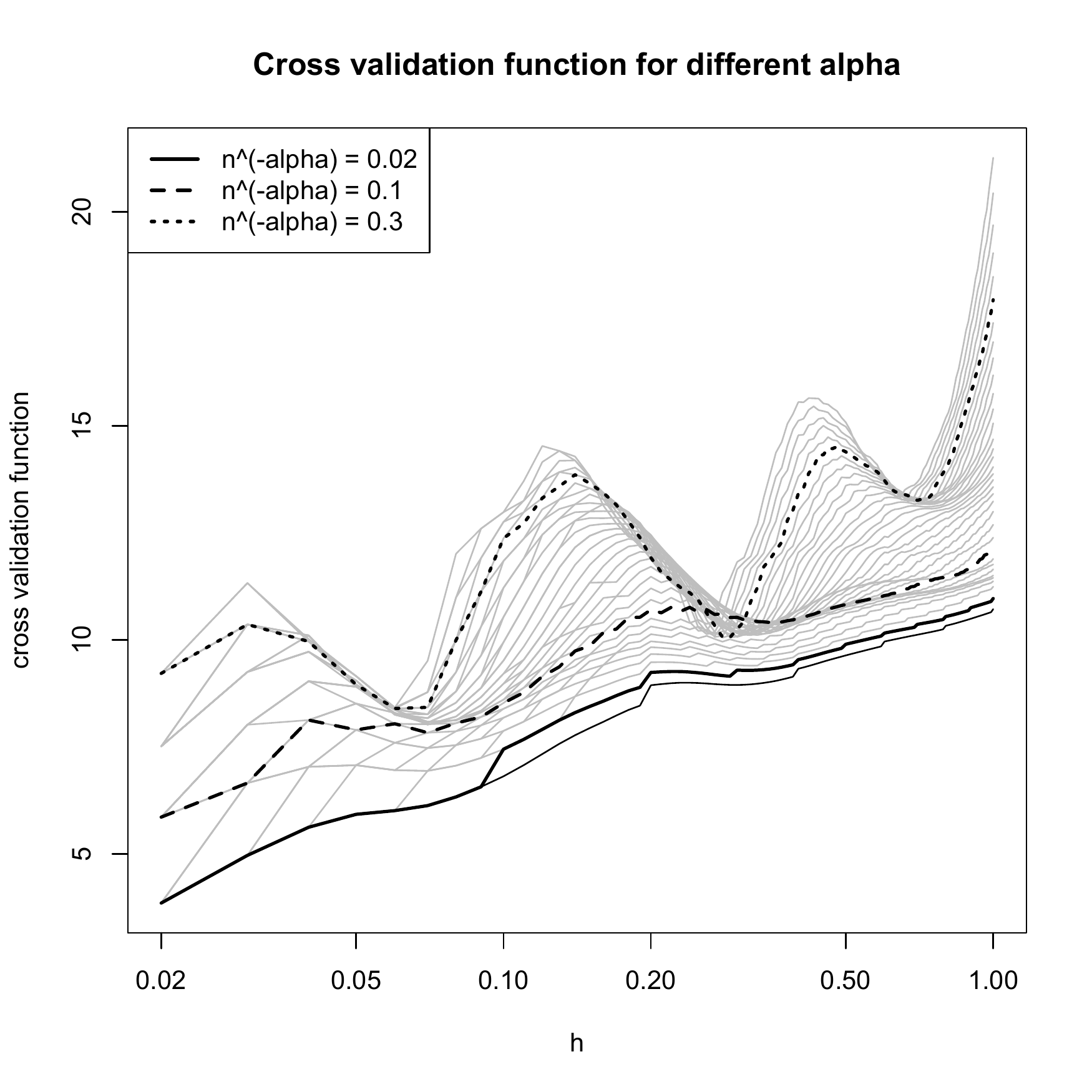}
    \caption{Cross validation functions $H(\hat G_h^{-})$ (grey) for different $\alpha$ where $n^{-\alpha}$ ranges from 0.01 to 0.35 in steps of 0.01.}
    \label{figure_cv_c1_diffalpha}
\end{figure}

\subsubsection{Estimation of $\gamma(u)$}
Writing $G(u) = (c(u,0),c(u,1))$ and $F(x,y) = \frac{y}{x}$, we have $\gamma(u) = \frac{c(u,1)}{c(u,0)} = F(G(u))$ and
\[
    \tilde H(G) = \frac{1}{n}\sum_{t=1}^{n}\frac{X_{t-1,n}^2}{c(t/n,0)^2}\Big(X_{t,n} - \frac{c(t/n,1)}{c(t/n,0)}X_{t-1,n}\Big)^2 w(t/n).
\]
We perform $N = 2000$ replications with $n^{-\alpha} = 0.08$. As before, we compare $\hat h^{comp}$ chosen by cross validation with $h_{opt}^{comp}$ which minimizes the integrated mean-quared error $\IE d_M^{comp}(h)$ and $(h^{*})^{comp} = \argmin_{h\in H_n}d_M^{comp}(h)$.
In Figure \ref{figure_cv_corr_h} (left) we plotted the bandwidths $\hat h^{comp}$ and $h_{opt}^{comp}$ as a vertical red line, in Figure \ref{figure_cv_corr_h} (right) one can see the boxplots corresponding to $d_M^{comp}(h)$ for $h\in \{\hat h^{comp}, h_{opt}^{comp}, (h^{*})^{comp}\}$. As before, $\hat h^{comp}$ concentrates around $h_{opt}^{comp}$ and the distances $d_M^{comp}(\hat h^{comp})$, $d_M^{comp}((h^{*})^{comp})$ are of comparable size.

\begin{figure}
    \centering
    \begin{tabular}{cc}
        \includegraphics[width=5.2cm]{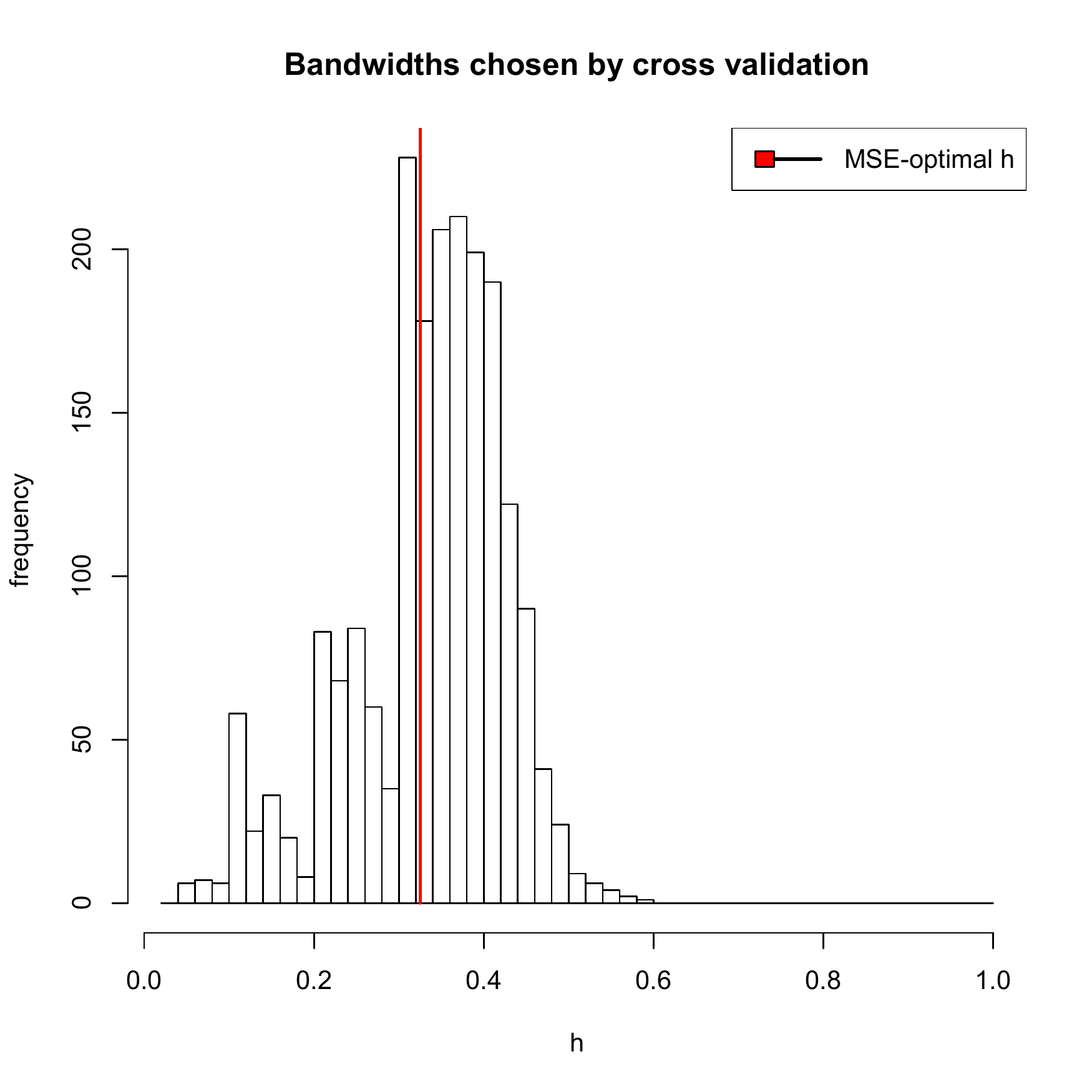} & \includegraphics[width=5.2cm]{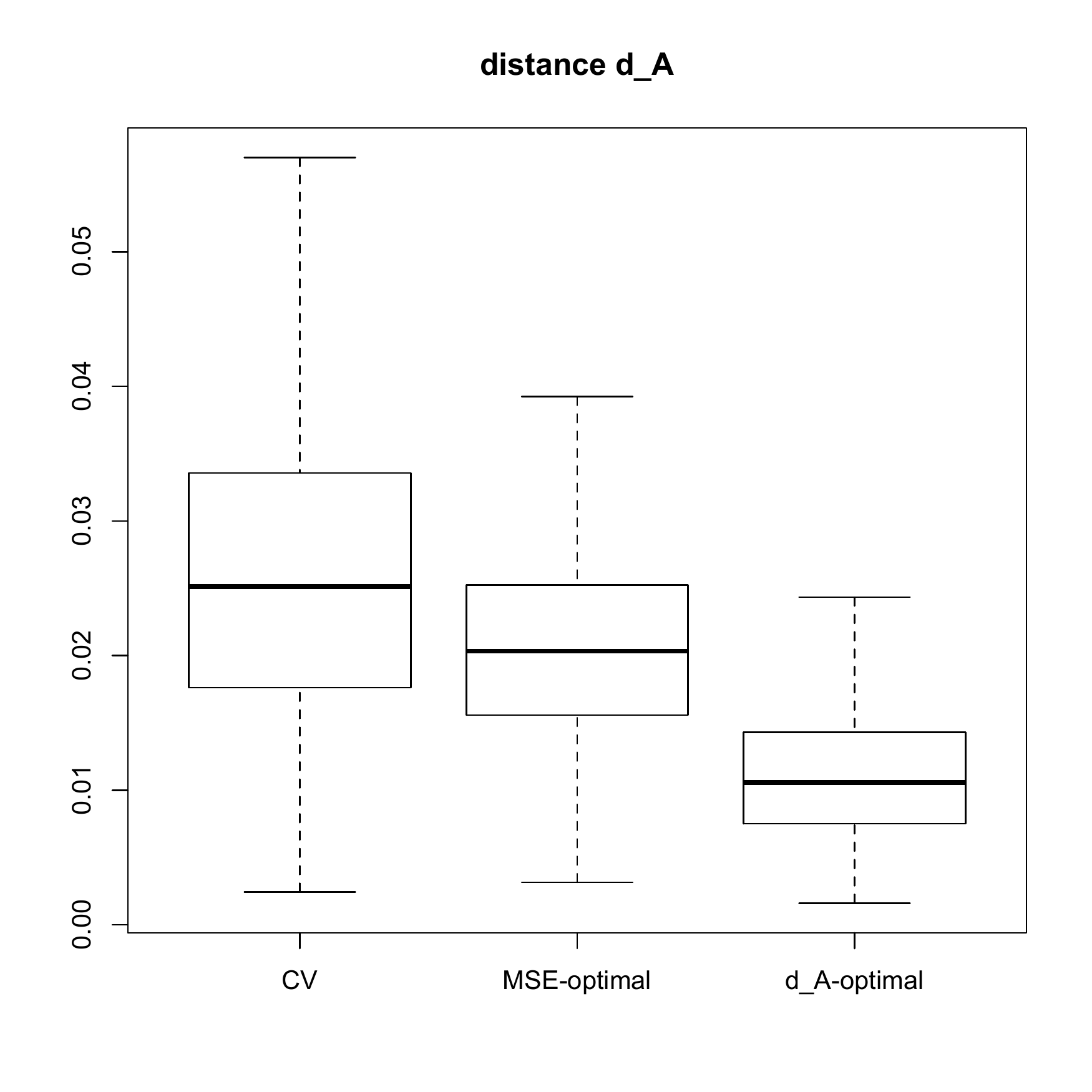}
    \end{tabular}
    \caption{Results for $F(\hat G_{\hat h}(u))$ used to estimate $\gamma(u) = \frac{c(u,1)}{c(u,0)}$ in the tvAR(1) model \reff{model_simulations}. Left: Histogram of the bandwidths $\hat h^{comp}$ obtained by cross validation. Right: Boxplot of the distances $d_M^{comp}(h)$ obtained for $h \in \{\hat h^{comp}, h_{opt}^{comp}, (h^{*})^{comp}\}$.}
    \label{figure_cv_corr_h}
\end{figure}

\subsubsection{Estimation of $\phi(u,\theta)$}
Writing $G_{\theta}(u) = \IE g_{\theta}(\tilde X_t(u))$ with $g_{\theta}(x) = \cos(\theta x)$, we have
\[
    \bar H(G) = \int_{\Theta} \frac{1}{n}\sum_{t=1}^{n}\big|\cos(\theta X_{t,n}) - \hat G_{\theta,h}(u)\big|_2^2 w(t/n) \dif \theta
\]
To perform the simulations, we choose $\Theta = [-10,10]$. Again, we use $N = 2000$ replications with $n^{-\alpha} = 0.10$. We compare $\hat h^{fun}$ chosen by cross validation with $h_{opt}^{fun}$ which minimizes the integrated mean-quared error $\IE d_M^{fun}(h)$ and $(h^{*})^{fun} = \argmin_{h\in H_n}d_M^{fun}(h)$.
In Figure \ref{figure_cv_charfkt_h} (left) we plotted the bandwidths $\hat h^{fun}$ and $h_{opt}^{fun}$ as a vertical red line, in Figure \ref{figure_cv_charfkt_h} (right) one can see the boxplots corresponding to $d_M^{fun}(h)$ for $h\in \{\hat h^{fun}, h_{opt}^{fun}, (h^{*})^{fun}\}$. Again, the results are satisfying in our opinion.

\begin{figure}
    \centering
    \begin{tabular}{cc}
        \includegraphics[width=5.2cm]{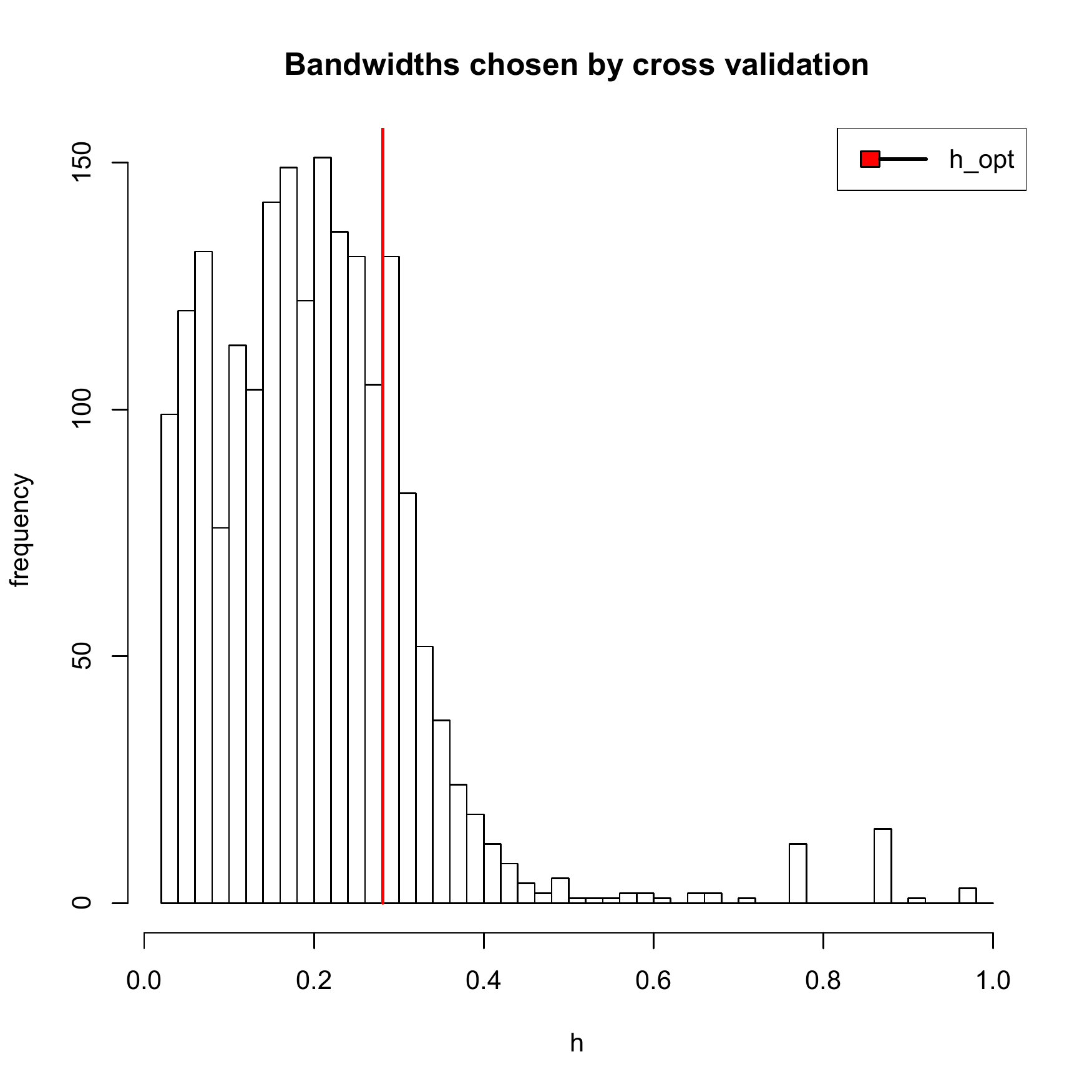} & \includegraphics[width=5.2cm]{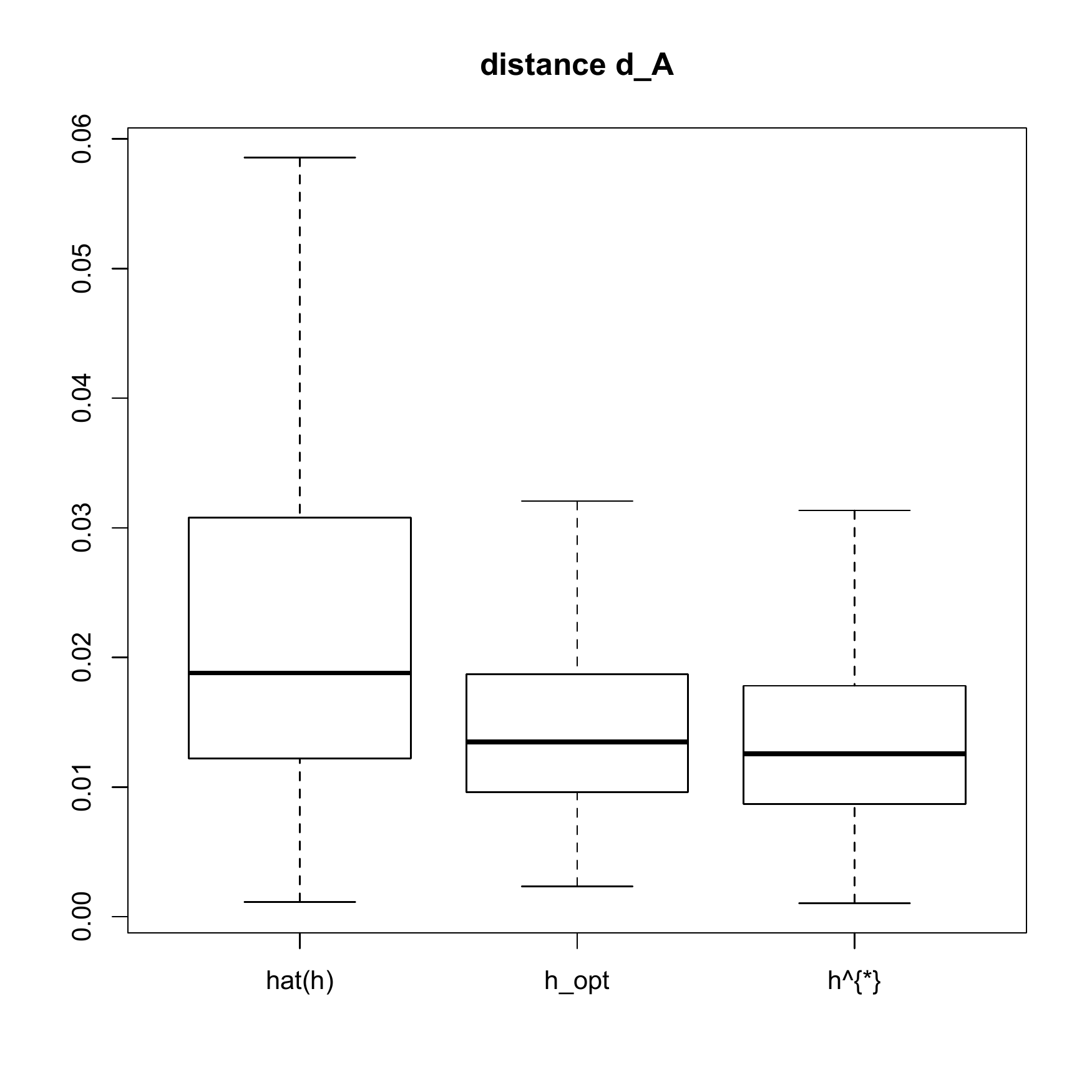}
    \end{tabular}
    \caption{Results for $\hat G_{\theta,\hat h}(u)$ used to estimate $\phi(u,\theta) =\IE \cos(\theta \tilde X_0(u))$ in the tvAR(1) model \reff{model_simulations} uniformly over $\theta \in [-10,10]$. Left: Histogram of the bandwidths $\hat h^{fun}$ obtained by cross validation. Right: Boxplot of the distances $d_M^{fun}(h)$ obtained for $h \in \{\hat h^{fun}, h_{opt}^{fun}, (h^{*})^{fun}\}$.}
    \label{figure_cv_charfkt_h}
\end{figure}

\section{Local model selection: A contrast minimization approach} 
\label{sec4}

The approach presented in the following allows to choose $h$ locally for each $u\in[0,1]$ in the estimator $\hat G_h(u)$. This enables the procedure to take into account local smoothness changes of the function $G(u)$. The algorithm is based on a contrast minimization approach which was introduced by \cite{lepski2011} for a different model. In the following we use a slightly modified estimator for $G(u)$, namely
\[
	\hat G_h^{\circ}(u) := \Big(\frac{1}{n}\sum_{t=1}^{n}K_h(t/n-u)\Big)^{-1}\cdot \hat G_h(u)
\]
which corrects for the deviation of the kernel Riemannian sum from its integral $\int K(x) \dif x = 1$.

The approach is based on a comparison of $\hat G_h(u) - \hat G_{h'}(u)$ with the theoretical variance of $\hat G_h(u)$. This leads to a procedure which does not need to estimate the bias of $\hat G_h(u)$ but only its variance and still finds a approximate minimizer of the corresponding mean squared error. As seen in Theorem \ref{thm_asymptotic_normality}, the asymptotic variance of $\hat G_h(u)$ is connected to the so-called long-run variance of $g(\tilde Y_t(u))$,
\[
	\Sigma(u) = \sum_{k\in\IZ}\Cov(g(\tilde Y_0(u)), g(\tilde Y_k(u)).
\]
Let $\hat \Sigma_n(u)$ be an estimator of $\Sigma(u)$. In practice, the requirements on the quality of $\hat \Sigma_n(u)$ are not too strong, however we need that it has the same order of magnitude as $\Sigma(u)$. Large deviations from the true $\Sigma(u)$ may lead to instable bandwidth choices. For theoretical derivations, we suppose:

\begin{assumption}[Assumptions on $\hat \Sigma_n(u)$]\label{assumption_sigma}
There exists some constant $c_{\Sigma} > 0$ such that for each $u\in[0,1]$,
\begin{itemize}
	\item $\hat \Sigma_n(u)$ is consistent such that for every $\varepsilon > 0$, $\IP(|\hat \Sigma_n(u) - \Sigma(u)|_{\infty} > \varepsilon) = O(n^{-2})$.
	\item $\|\ |\hat \Sigma_n(u)|_{\infty}\ \|_2 \le c_{\Sigma}$.
\end{itemize}
\end{assumption}
\noindent
A possible choice for $\hat \Sigma_n(u)$ is given by
\begin{equation}
	\hat \Sigma_n(u) := \sum_{k=-r_n}^{r_n}\hat c^{g}_\eta(u,k), \quad\quad \hat c_{\eta}^{g}(u,k) := \frac{1}{n}\sum_{t=1}^{n}K_{\eta}(t/n-u)\cdot g(Y_{t,n})g(Y_{t+k,n})'\label{longrun_covariance_estimator}
\end{equation}
with some $0 \le r_n \le n$, $r_n \to \infty$ and $\eta = \eta_n \to 0$. We are now able to define the local bandwidth selection procedure as follows:
	Let $\lambda(h) := \max\{1,\sqrt{\log(1/h)}\}$ and
	\[
		\hat v^2(h,u) := \frac{\sigma_K^2}{nh}\tr(\hat \Sigma_n(u)).
	\]
	Let $H_n \subset (0,1]$ be a geometrically decaying grid of bandwidths given by
	\begin{equation}
		H_n = \{a^{-k}:k\in \IN_0\} \cap [\underline{h}, 1],\label{def_bandwidth_grid}
	\end{equation}
	where $\underline{h} = \underline{h}_n > 0$ is some lower bound specified below.
	For some $C^{\#} > 0$, define
	\begin{equation}
		\hat h(u) :\in \sup\{h\in H_n:|\hat G_h^{\circ}(u) - \hat G_{h'}^{\circ}(u)|_2 \le C^{\#}\cdot \hat v(h',u) \lambda(h') \text{ for all } h' < h, h'\in H_n\}.\label{lokale_wahl_bandbreite}
	\end{equation}

\begin{remark}
	\begin{itemize}
	\item  $\hat v^2(h,u)$ is approximately the variance of $\hat G_h^{\circ}(u) - \hat G_{h'}^{\circ}(u)$. It is multiplied by a log factor $\lambda(h')$ to account for local random deviations. As described in \cite{lepski2011}, the bandwidth \reff{lokale_wahl_bandbreite} can be seen as the largest bandwidth where $\hat G_h^{\circ}(u)$ does not deviate significantly from $G(u)$.
	\item To prove theoretical results, $H_n$ should not allow for too small bandwidths.
	\item The condition $C^{\#} \ge 64$ is needed for the theoretical results. However, for many applications $C^{\#}$ should be chosen smaller to avoid too conservative (i.e. too large) choices of $\hat h(u)$.
	\end{itemize}
\end{remark}

%	For each $u\in[0,1]$, define
%	\begin{eqnarray*}
%		\hat v^2(h,u) &:=& \frac{1}{nh}\int K(x)^2 \dif x \cdot \text{tr}(\hat \Sigma_h(u)),\\
%		\hat v^2(h,h',u) &:=& \frac{1}{n}\int \{K_h(x) - K_{h'}(x)\}^2 \dif x \cdot \text{tr}(\hat \Sigma_h(u)),
%	\end{eqnarray*}
%	where
%	For some fixed $D_1,D_2 > 0$, define
%	\[
%		\lambda(h) := \max\{1,\sqrt{D_2 \log(\overline{h}/h)}\}, \quad\quad \hat \psi(h,h',u) := D_1 \hat v(h,u) \lambda(h) + \hat v(h,h',u)\cdot \lambda(h').
%	\]
%	(where $\overline{h} = \max_{h\in H_n}h$) and choose
%	\[
%		\hat h(u) :\in \sup\{h\in H_n:|\hat G_h(u) - \hat G_{h'}(u)|_2 \le \hat \psi(h,h',u) \text{ for all } h' < h, h'\in H_n\}.
%	\]
%\end{definition}

We now provide the assumptions on the process $\tilde X_0(u)$ under which the theoretical statements for the local bandwidth selection procedure $\hat h(u)$ holds.

\begin{assumption}[Moment / Dependence assumptions]\label{assumption_dependence}
	Let $\alpha \ge 0$. Let Assumption \ref{def_localstat} hold for all $q \ge 1$. Define $N_{\alpha}(q) := \Gamma(\alpha q + 2)$. Assume that for all $u\in [0,1]$,
	\begin{enumerate}
		\item[(i)] $\sup_{u\in[0,1]}\|\tilde X_0(u)\|_q \le D \cdot N_{\alpha}(q)$,
		\item[(ii)] $\sup_{u\in[0,1]}\delta_q^{\tilde X_0(u)}(k) \le D \cdot N_{\alpha}(q) \cdot \rho^{k}$.
	\end{enumerate}
\end{assumption}

\begin{remark}
	\begin{itemize}
		\item Assumption \ref{assumption_dependence}(i) basically asks for a quantification of the growth of the moments $\|\tilde X_0(u)\|_q^q$ in $q$. It can be easily seen that the condition $\sup_{u\in[0,1]}\|\tilde X_0(u)\|_q \le D \cdot N_{\alpha}(q)$ follows if $\tilde X_0(u)$ has a Lebesgue density $\sim \exp(-x^{1/\alpha})$.
		\item Assumption \ref{assumption_dependence}(ii) additionally asks the process to have geometrically decaying dependence measure $\delta_q^{\tilde X_0(u)}(k)$.
	\end{itemize}
\end{remark}

The assumptions stated above are mainly used to prove a Bernstein inequality which is a key ingredient to prove the consistency of contrast minimization methods. The geometric dependence decay is used to apply a Bernstein inequality from \cite{doukhanneumann2007}, while the moment conditions are necessary to allow for a large set of bandwidths $H_n$ by establishing a simple exponential inequality. The Bernstein inequality is formulated in terms of the process
\[
    \tilde G_h(u) := \frac{1}{n}\sum_{t=1}^{n}K_h(t/n-u)\cdot g(\tilde Y_t(t/n))
\]
which is a natural approximation of $\hat G_h(u)$ itself. The deviation $\hat G_h(u) - \tilde G_h(u)$ can be controlled by using the property $\|X_{t,n} - \tilde X_t(t/n)\|_q \le Dn^{-1}$ from Assumption \ref{def_localstat}.

\begin{theorem}[Bernstein inequality for $\tilde G_h(u)$]\label{lemma_bernstein_kerne}
	Fix $u\in [0,1]$. Assume that $g\in \sH(M,\chi,C)$ and Assumption \ref{assumption_dependence} holds. Then there exist some constants $\tilde c_1,c_4,c_5,c_H > 0$ only dependent on $M,\chi,C,D,\rho,|K|_{\infty},\alpha$ such that for all $j = 1,...,d$, $h\in H_n$:
	\[
		\IP\Big((nh)\big|\tilde G_h(u)_j - \IE \tilde G_h(u)_j\big| > \gamma\Big) \le 2\exp\Big(-\frac{\gamma^2}{32 (nh)^2 v_j^2(h,u)+ c_4 a_n^{1/3} \gamma^{5/3}}\Big) + c_5 \frac{n^{-1}}{\gamma},
	\]
	with $a_n := \tilde c_1(8\log(n))^{\alpha M}$, $v_j^2(h,u) := \frac{\sigma_K^2}{nh}\Sigma(u)_{jj}$ and $\underline{h} = c_H\cdot \log(n)^{5+2\alpha M}$ in \reff{def_bandwidth_grid}.
\end{theorem}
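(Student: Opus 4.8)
The plan is to decompose $\tilde G_h(u)_j - \IE\tilde G_h(u)_j$ into a bounded (truncated) part and a remainder, apply the Bernstein inequality of Doukhan--Neumann for weakly dependent sequences to the bounded part, and control the remainder via the moment growth in Assumption~\ref{assumption_dependence}(i). Concretely, write $Z_{t} := K_h(t/n-u)\big(g(\tilde Y_t(t/n))_j - \IE g(\tilde Y_t(t/n))_j\big)$, so that $(nh)(\tilde G_h(u)_j - \IE\tilde G_h(u)_j) = \frac{1}{h}\sum_{t=1}^n Z_t$. Since $g\in\sH(M,\chi,C)$ and the $\chi$-norm of $\tilde Y_t(t/n)$ has moments controlled by $N_\alpha(q)=\Gamma(\alpha q+2)$ (this needs the invariance proposition together with Assumption~\ref{assumption_dependence}(i), summability of $\chi_i=O(i^{-2-\varepsilon})$, and Minkowski), one gets $\|Z_t\|_q \le |K|_\infty \cdot C' \cdot N_{\alpha M}(q)$ for a constant $C'$; that is, $g(\tilde Y_t(t/n))_j$ has $(\alpha M)$-sub-Weibull-type tails. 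Moreover the functional dependence measure of $g(\tilde Y_t(t/n))_j$ inherits geometric decay: $\delta_q^{g(\tilde Y(u))}(k) \lesssim N_{\alpha M}(q)\rho'^{\,k}$, by the standard argument that $|g(x)-g(y)|\le C|x-y|_\chi(1+|x|_\chi^{M-1}+|y|_\chi^{M-1})$ plus Hölder splits the dependence measure of $g$ into the dependence measure of $\tilde X$ times moment factors.

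Next I would choose a truncation level $T = T_n$ (polynomial in $\log n$, of order $(\log n)^{\alpha M}$, matching the quantity $a_n$ in the statement) and split $Z_t = Z_t^{\le} + Z_t^{>}$ where $Z_t^{\le}$ keeps the event $\{|g(\tilde Y_t(t/n))_j|\le T\}$, recentering each piece. For $Z_t^{>}$: by Markov and the sub-Weibull moment bound, $\IP(|g(\tilde Y_t(t/n))_j|>T)$ and $\IE|Z_t^{>}|$ are both $\lesssim n^{-1}\gamma^{-1}\cdot(\text{const})$ once $T\asymp(\log n)^{\alpha M}$ — this is where the $c_5 n^{-1}/\gamma$ term and the lower bound $\underline h = c_H\log(n)^{5+2\alpha M}$ on the grid enter, the latter being needed so that the summable-error bookkeeping over the (polynomially many in $n$) bandwidths in $H_n$ goes through. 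For $Z_t^{\le}$, which is bounded by $|K|_\infty T$ and has the same geometric weak-dependence coefficients (truncation of a Lipschitz-in-$\chi$ functional does not destroy the dependence decay, up to constants), apply the Doukhan--Neumann Bernstein inequality: this yields a bound of the shape $2\exp\big(-\gamma^2/(A\,V + B\,T^{1/3}\gamma^{5/3})\big)$ where $V$ is the variance proxy $\sum_{t,s}|\Cov(Z_t^{\le},Z_s^{\le})|$ and the $\gamma^{5/3}T^{1/3}$ term is exactly the form produced by that inequality for geometrically weakly dependent, bounded summands (the exponent $5/3$ and the cubic-root weight on the bound are characteristic of Doukhan--Neumann). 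Dividing through by $h$ rescales $\gamma$ and turns $V$ into $(nh)^2 v_j^2(h,u)$ up to the constant $32$, using that $\sum_{t}K_h(t/n-u)^2 \approx nh\,\sigma_K^2$ and the autocovariance sum converges to $\Sigma(u)_{jj}$.

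The main obstacle I expect is twofold and entirely quantitative rather than conceptual: (a) getting the variance proxy $V$ to collapse to the clean expression $32(nh)^2 v_j^2(h,u)$, which requires carefully replacing the truncated-and-recentered covariances $\Cov(Z_t^{\le},Z_s^{\le})$ by $\Cov(Z_t,Z_s)$ with an error that is absorbable, and then Riemann-sum approximation $\frac1{nh}\sum_{k}\sum_t K_h(\tfrac tn-u)K_h(\tfrac{t+k}{n}-u)\Cov(g(\tilde Y_0(\tfrac tn)),g(\tilde Y_k(\tfrac tn))) \to \sigma_K^2\Sigma(u)_{jj}$, uniformly enough; and (b) calibrating $T_n$, $\underline h$, and the constants so that the Doukhan--Neumann sub-exponential term and the truncation-remainder term $c_5 n^{-1}/\gamma$ simultaneously hold for \emph{all} $h\in H_n$ and \emph{all} $\gamma>0$ — in particular, checking that the $a_n^{1/3}\gamma^{5/3}=\tilde c_1^{1/3}(8\log n)^{\alpha M/3}\gamma^{5/3}$ term genuinely dominates the contribution of the truncation level $T^{1/3}$ in the bounded-part Bernstein bound. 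Everything else (the invariance under $g$, the geometric decay transfer, the moment bookkeeping) is routine given the earlier propositions and Assumption~\ref{assumption_dependence}.
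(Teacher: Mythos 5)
Your plan is essentially the paper's own proof: the paper also truncates at level $a_n\asymp(\log n)^{\alpha M}$ (justified by an exponential-moment lemma derived from Assumption \ref{assumption_dependence}(i)), handles the remainder by Markov's inequality to produce the $c_5 n^{-1}/\gamma$ term, applies the Doukhan--Neumann Bernstein inequality to the truncated sum using the geometric decay of $\delta_q^{g(\tilde Y(u))}$ transferred from $\tilde X$ via the $\sH(M,\chi,C)$-Lipschitz/H\"older argument, and then identifies the variance proxy with $(nh)^2 v_j^2(h,u)$ (up to the factor giving $32$) by a separate variance-calculation lemma, with the lower bound $\underline{h}$ ensuring the replacement errors are negligible. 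The only minor discrepancy is your attribution of $\underline{h}$ to a union bound over the grid rather than to making the variance approximation error $O((nh)^{-2}+n^{-1}\log(n)^2)$ small relative to $(nh)^2v_j^2(h,u)$, but you correctly flagged this calibration as the point needing care.
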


The following theorem states that $\IE| \hat G_{\hat h(u)}(u) - G(u)|_2^2$ is bounded by an universal constant times $\IE|\hat G_{h_{opt}(u)}(u) - G(u)|_2^2$ up to an additional log-factor (cf. \reff{hopt_lokal} for $h_{opt}(u)$).

\begin{theorem}\label{upperbound_thm2}
	Fix $u\in (0,1)$. Suppose that Assumptions \ref{assumption_sigma} and \ref{assumption_dependence} are fulfilled. 
	
	Suppose further that $g \in \sH(M,\chi,C)$ such that for all $j \ge 0$, $|\chi_j| \le C \rho^j$ and $G$ is twice continuously differentiable around $u$ with  $|\partial_u^2 G(u)|_2 \not= 0$.
	
	Then there exists some universal constant $c = c(a) > 0$ and some constant $c_H > 0$ only dependent on $M,\chi,C,D,\rho,|K|_{\infty},\alpha$ such that for $\underline{h} = c_H\cdot \log(n)^{5+2\alpha M}$, $C^{\#} \ge 64$ and $n$ large enough,
	\[
		\IE |\hat G_{\hat h(u)}^{\circ}(u) - G(u)|_2^2 \le c \cdot \Big(\sigma_K^2 \tr(\Sigma(u))\Big)^{4/5}\cdot \Big(\mu_K^2 |\partial_u^2 G(u)|_2^2\Big)^{1/5}\cdot \Big(\frac{\log(n)}{n}\Big)^{4/5}.
	\]
\end{theorem}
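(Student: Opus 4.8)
\emph{Proof strategy.} The plan is to carry out the classical Lepski-type oracle argument, using the Bernstein inequality of Theorem~\ref{lemma_bernstein_kerne} in place of the Gaussian/i.i.d.\ concentration step of the original scheme. Write $v(h,u)^2 := \sigma_K^2\,\tr(\Sigma(u))/(nh)$ for the leading variance (cf.\ Theorem~\ref{theorem_bias}(ii)); by Theorem~\ref{theorem_bias}(i), since $u$ is interior, $|\IE\hat G_h^{\circ}(u)-G(u)|_2 \le \tfrac{h^2}{2}\mu_K|\partial_u^2 G(u)|_2 + o(h^2) + O((nh)^{-1})$ uniformly on $H_n$. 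I would introduce the inflated oracle bandwidth $h^{\ast}=h^{\ast}_n$ as the solution of $\tfrac14 h^4\mu_K^2|\partial_u^2 G(u)|_2^2 = v(h,u)^2\lambda(h)^2$, for which $\log(1/h^{\ast})=\tfrac15\log n\,(1+o(1))$, $h^{\ast}\asymp\big(\sigma_K^2\tr(\Sigma(u))/(\mu_K^2|\partial_u^2 G(u)|_2^2)\big)^{1/5}(\log n/n)^{1/5}$, and the common value $v(h^{\ast},u)^2\lambda(h^{\ast})^2\asymp\big(\sigma_K^2\tr(\Sigma(u))\big)^{4/5}\big(\mu_K^2|\partial_u^2 G(u)|_2^2\big)^{1/5}(\log n/n)^{4/5}$ equals, up to constants, the right-hand side of the claim. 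Let $h^{+}$ be the largest point of $H_n$ with $h^{+}\le h^{\ast}$; since $a$ is fixed and $\underline h\ll h^{\ast}$ one has $h^{+}\in(h^{\ast}/a,h^{\ast}]$ for $n$ large, so $\lambda(h')\ge\lambda(h^{\ast})\asymp\sqrt{\log n}$ for every $h'\in H_n$ with $h'\le h^{+}$, and (again using the lower bound on $\underline h$) the bias above is $\lesssim v(h',u)\lambda(h')$ for every such $h'$, with equality up to $1+o(1)$ at $h'=h^{\ast}$.

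Next I would fix a small $\varepsilon_0>0$, set $\beta:=(C^{\#}-3)/2$, and let $\mathcal{A}_n$ be the intersection of $\{|\hat\Sigma_n(u)-\Sigma(u)|_{\infty}\le\varepsilon_0\}$ with the events $\{|\hat G_h^{\circ}(u)_j-\IE\hat G_h^{\circ}(u)_j|\le\beta\,\hat v(h,u)\lambda(h)\}$ over all coordinates $j$ and all $h\in H_n$ with $h\le h^{+}$. On the first event $\hat v(h,u)\asymp v(h,u)$ uniformly, and Assumption~\ref{assumption_sigma} bounds its complement by $O(n^{-2})$. For one event of the second type I would first pass from $\hat G_h^{\circ}(u)$ to $\tilde G_h(u)$ --- the difference being $O(n^{-1})$ in every $L^q$ by $\|X_{t,n}-\tilde X_t(t/n)\|_q\le Dn^{-1}$ (Assumption~\ref{def_localstat}), $g\in\sH(M,\chi,C)$, H\"older, and the fact that the $\hat G_h^{\circ}$-renormalisation is $1+O((nh)^{-1})$, hence negligible against $\hat v(h,u)\lambda(h)\gtrsim n^{-1/2}$ off a set of probability $\le n^{-10}$ --- and then apply Theorem~\ref{lemma_bernstein_kerne} with $\gamma=\beta(nh)\hat v(h,u)\lambda(h)\asymp(nh)v(h,u)\lambda(h)$. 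The lower bound $n\underline h\gtrsim(\log n)^{5+2\alpha M}$ makes the heavy-tail summand $c_4 a_n^{1/3}\gamma^{5/3}$ a small fraction of the Gaussian term $32(nh)^2 v_j^2(h,u)$, so the failure probability is at most $2\exp(-c\,\beta^2\lambda(h)^2)+c_5 n^{-1}/\gamma$ for a numerical $c>0$; since $\lambda(h)^2\ge\tfrac15\log n\,(1+o(1))$ and $\beta\ge 30$ because $C^{\#}\ge 64$, this is $O(n^{-2})$, and a union bound over the $O(\log n)$ relevant $h$ and over $j$ gives $\IP(\mathcal{A}_n^c)=O(n^{-2}\log n)$.

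On $\mathcal{A}_n$ the standard Lepski argument applies. First, $h^{+}$ lies in the admissible set of \reff{lokale_wahl_bandbreite}: for $h'<h^{+}$ in $H_n$, split $\hat G_{h^{+}}^{\circ}(u)-\hat G_{h'}^{\circ}(u)$ into its two centred stochastic parts --- which sum to at most $2\beta\,\hat v(h',u)\lambda(h')$ by monotonicity of $h\mapsto\hat v(h,u)\lambda(h)$ on $\mathcal{A}_n$ --- and the deterministic part $\IE\hat G_{h^{+}}^{\circ}(u)-\IE\hat G_{h'}^{\circ}(u)$, which is $\le(2+o(1))\,\hat v(h',u)\lambda(h')$ by the bias bound and the definition of $h^{\ast}$; since $2+2\beta=C^{\#}-1$, the total is $\le C^{\#}\hat v(h',u)\lambda(h')$ for $n$ large, so $\hat h(u)\ge h^{+}$. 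If $\hat h(u)=h^{+}$ then $|\hat G_{\hat h(u)}^{\circ}(u)-G(u)|_2\le(\text{bias at }h^{+})+\beta\hat v(h^{+},u)\lambda(h^{+})\lesssim v(h^{\ast},u)\lambda(h^{\ast})$; if $\hat h(u)>h^{+}$ the definition of $\hat h(u)$ forces $|\hat G_{\hat h(u)}^{\circ}(u)-\hat G_{h^{+}}^{\circ}(u)|_2\le C^{\#}\hat v(h^{+},u)\lambda(h^{+})$, and combining with $|\hat G_{h^{+}}^{\circ}(u)-G(u)|_2\lesssim v(h^{\ast},u)\lambda(h^{\ast})$ again gives $|\hat G_{\hat h(u)}^{\circ}(u)-G(u)|_2\lesssim v(h^{\ast},u)\lambda(h^{\ast})$. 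Squaring and inserting the value of $v(h^{\ast},u)^2\lambda(h^{\ast})^2$ bounds $\IE[|\hat G_{\hat h(u)}^{\circ}(u)-G(u)|_2^2\,\Ii_{\mathcal{A}_n}]$ by the claimed expression. For the complement I would use $\hat h(u)\ge\underline h$, so $|\hat G_{\hat h(u)}^{\circ}(u)-G(u)|_2\le Z_n:=\sup_{h\in H_n}|\hat G_h^{\circ}(u)-G(u)|_2$ with $\|Z_n\|_{2p}\le c_p(\log n)^{1/(2p)}$ for every $p$ (Assumption~\ref{assumption_dependence}, $g\in\sH(M,\chi,C)$, union bound over $H_n$); then H\"older gives $\IE[Z_n^2\Ii_{\mathcal{A}_n^c}]\le\|Z_n\|_{2p}^2\,\IP(\mathcal{A}_n^c)^{1-1/p}=O(n^{-2(1-1/p)+o(1)})=o((\log n/n)^{4/5})$ for $p$ large, and adding the two contributions gives the theorem.

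The main obstacle is the construction of $\mathcal{A}_n$ in the second step. One has to verify that, with $\gamma\asymp(nh)v(h,u)\lambda(h)$, the heavy-tail summand $c_4 a_n^{1/3}\gamma^{5/3}$ in Theorem~\ref{lemma_bernstein_kerne} is genuinely dominated by the Gaussian term along the entire grid --- which is exactly where the polynomial-in-$\log n$ lower bound on $\underline h$ and the moment-growth normalisation $N_{\alpha}(q)=\Gamma(\alpha q+2)$ of Assumption~\ref{assumption_dependence} enter --- and that, because every bandwidth entering the admissibility test for $h^{+}$ satisfies $\lambda(h)\gtrsim\sqrt{\log n}$, the requirement $C^{\#}\ge 64$ pushes the Bernstein bound to a negative power of $n$ exceeding $2$, small enough both to survive the union bound over $H_n$ and to be absorbed by the crude moment estimate on $\mathcal{A}_n^c$. (A cleaner route is to first prove the more general statement for the setting of Section~\ref{sec4} that is deferred to the Appendix and then specialise, but the mechanism is the same.)
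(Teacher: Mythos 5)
Your proposal is a correct Lepski-type argument and uses the same key ingredients as the paper (the Bernstein inequality of Theorem \ref{lemma_bernstein_kerne}, the stationary approximation step of Lemma \ref{lemma_statapprox}, Assumption \ref{assumption_sigma} for $\hat\Sigma_n$, and a bias--variance$\times\log$ balance at $h\asymp(\log n/n)^{1/5}$), but it is organized differently from the paper's proof. The paper proves Theorem \ref{upperbound_thm2} in two stages: first an oracle inequality (Proposition \ref{upperbound}) with respect to the bandwidth $h_0(u)$ defined by the bias-versus-threshold condition $|G_{h'}(u)-G(u)|_2\le \frac{C^{\#}}{8}v(h,u)\lambda(h)$, where the event $\{\hat h(u)<h_0(u)\}$ is handled by summing over pairs $(h,h')$, applying the increment Bernstein inequality (Lemma \ref{lemma_bernstein_kerne2}) to $\tilde G_h-\tilde G_{h'}$, and integrating the tails to obtain an additive residual $c'\log(n)^2 n^{-1}$; second, a conversion of $v^2(h_0(u),u)\lambda(h_0(u))^2$ into the explicit rate by comparing with $h_{opt,n}(u)$ via a geometric-series split of the grid. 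You instead balance explicitly at $h^{\ast}$, place single-bandwidth concentration at every grid point $h\le h^{+}$ (plus $\hat\Sigma_n$-accuracy) into one good event $\mathcal{A}_n$, and dispose of $\mathcal{A}_n^c$ by a crude H\"older/moment bound on $\sup_{h\in H_n}|\hat G_h^{\circ}(u)-G(u)|_2$. This buys a shorter and more transparent argument (no increment Bernstein inequality, no tail integration) at the cost of cruder residual terms, which are in any case dominated by $(\log n/n)^{4/5}$; the paper's route yields a reusable oracle inequality with a sharper $\log^2 n/n$ remainder.

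Two points need correcting, though neither is fatal. First, your claim $\IP(\mathcal{A}_n^c)=O(n^{-2}\log n)$ ignores the additive term $c_5\,n^{-1}/\gamma$ in Theorem \ref{lemma_bernstein_kerne}: with $\gamma\asymp (nh)v(h,u)\lambda(h)\gtrsim (\log n)^{(5+2\alpha M)/2}$ this term only gives $O(n^{-1}(\log n)^{-c})$ per bandwidth, so $\IP(\mathcal{A}_n^c)=O(n^{-1})$ rather than $O(n^{-2}\log n)$; your H\"older step still closes the argument, but you must then take $p>5$ so that $n^{-(1-1/p)}=o((\log n/n)^{4/5})$, rather than relying on the exponent $2(1-1/p)$. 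Second, you invoke Theorem \ref{theorem_bias}(i) for the bias, but that result requires Assumption \ref{assumption_derivative_process}, which is not among the hypotheses of Theorem \ref{upperbound_thm2}. The paper avoids this by expanding the deterministic quantity directly: $\IE\hat G_h^{\circ}(u)$ is, up to $O((nh)^{-1})+O(n^{-1})$ (kernel Riemann-sum and stationary-approximation errors), a kernel average of $G(t/n)$, and a Taylor expansion of the twice continuously differentiable $G$ around $u$ gives $\frac{h^2}{2}\mu_K|\partial_u^2G(u)|_2(1+o(1))$. Replacing your citation by this direct computation makes your bias bound valid under the stated assumptions.
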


\begin{remark}
    The parameter $C^{\#}$ is a tuning parameter of the procedure. However, from a theoretical point of view it does not depend on unknown quantities like $G(u)$ but is a universal constant and therefore can be chosen based on training data before applying the procedure to real data. In practice, we obtained good results with $C^{\#} \in [0.5,1.0]$.
\end{remark}

\subsection{Simulations}

We discuss the behavior of $\hat h(u)$ if the underlying time series model is a tvAR(1) process given by \reff{model_simulations} with $\zeta_t$  i.i.d. $N(0,1)$ and
\begin{enumerate}[(a)]
    \item $a(u) = 0.6 \sin(2\pi u)$,
    \item $a(u) = 0.5 \cdot (1 - 2\Ii_{[0.5,1]}(u))$
\end{enumerate}
In both situations we will estimate
\[
    c(u,1) = \IE \tilde X_0(u) \tilde X_{1}(u)
\]
by using $N = 2000$ replications of the time series with length $n = 2000$. In the simulations, we use the Epanechnikov kernel $K(x) = \frac{3}{2}(1-(2x)^2)\Ii_{[-\frac{1}{2},\frac{1}{2}]}(x)$ and the set of bandwidths $H_n$ with $a = 0.9$ and $\underline{h} = 0.9^{29} \approx 0.047$ with $C^{\#} = 1.0$ (for (a)) and $C^{\#} = 0.8$ (for (b)). We estimate the long run variance $\Sigma(u)$ by $\hat \Sigma_n(u)$ given in \reff{longrun_covariance_estimator} with a simple ad-hoc choice of $\eta = 0.35$ and $r_n = 18$.

A typical phenomenon arises if $\underline{h}$ is chosen too small, i.e. $H_n$ allows for too small bandwidths: Then, $\hat h(u)$ tends to select the smallest possible bandwidth abruptly at random locations $u \in [0,1]$. The theoretical reason being that in such cases $\underline{h}$ does not fulfill $\underline{h} \ge c_H \frac{\log(n)^{5+2\alpha M}}{n}$ which is needed to discuss large deviations of $\hat G_h(u)$ in the Bernstein inequality. In practice, one may apply the selection procedure $\hat h(u)$ for all $u\in[0,1]$ and for different $\underline{h}$, choosing the smallest $\underline{h}$ where no abrupt outliers occur.

In Figure \ref{figure_lepski_onerealization} (left) we depicted the typical behavior of $\hat h(u)$ (red) for the two scenarios (a),(b) for a single realization. Moreover, $\hat h(u)$ itself is drawn with a green line. The rough shape of $G_{\hat h(u)}(u)$ is due to the  stepwise form of $\hat h(u) \in H_n$.
Generally speaking it can be seen that $\hat h(u)$ gets smaller if a too large bandwidth would introduce an unnecessary bias (see the peaks in (a) or the step in (b)). $\hat h(u)$ gets large if the variance of the procedure can be significantly reduced. In (b), this is the case on the boundaries where a nearly constant function has to be estimated and in the exact middle at $u = 0.5$ where the target value 0 is estimated best by taking the mean over all observations. In (a) this is the case at points $u$ where $G(u)$ has nearly linear shape (turning points). A typical drawback of the local selection procedure $\hat h(u)$ is that it may be more sensitive to strong local dependencies. In (b) one can see that around $u \approx 0.8$, $\hat h(u)$ is much smaller than the bandwidths chosen around it which is due to a strong peak of the process which is due to a violation of the asymptotic statements. 
In Figure \ref{figure_lepski_onerealization} (right) the corresponding covariance estimator $\hat \Sigma_n(u)$ is drawn against the true long run variance $\Sigma(u)$. It can be seen that the method works quite satisfying even if $\Sigma(u)$ is not estimated well; however it is important that the order of magnitudes coincide.
Comparing $G_{\hat h(u)}(u)$ (red) and $G_{h^{*}}(u)$ (blue), where $h^{*} = \argmin_{h\in H_n}d_M(h)$, it can be seen that in case (b), the local bandwidth selection may outperform the global bandwidth selection because of the significantly changing smoothness properties of $G(u)$.

\begin{figure}
    \centering
    \begin{tabular}{ccc}
        (a) &  \includegraphics[width=5.2cm]{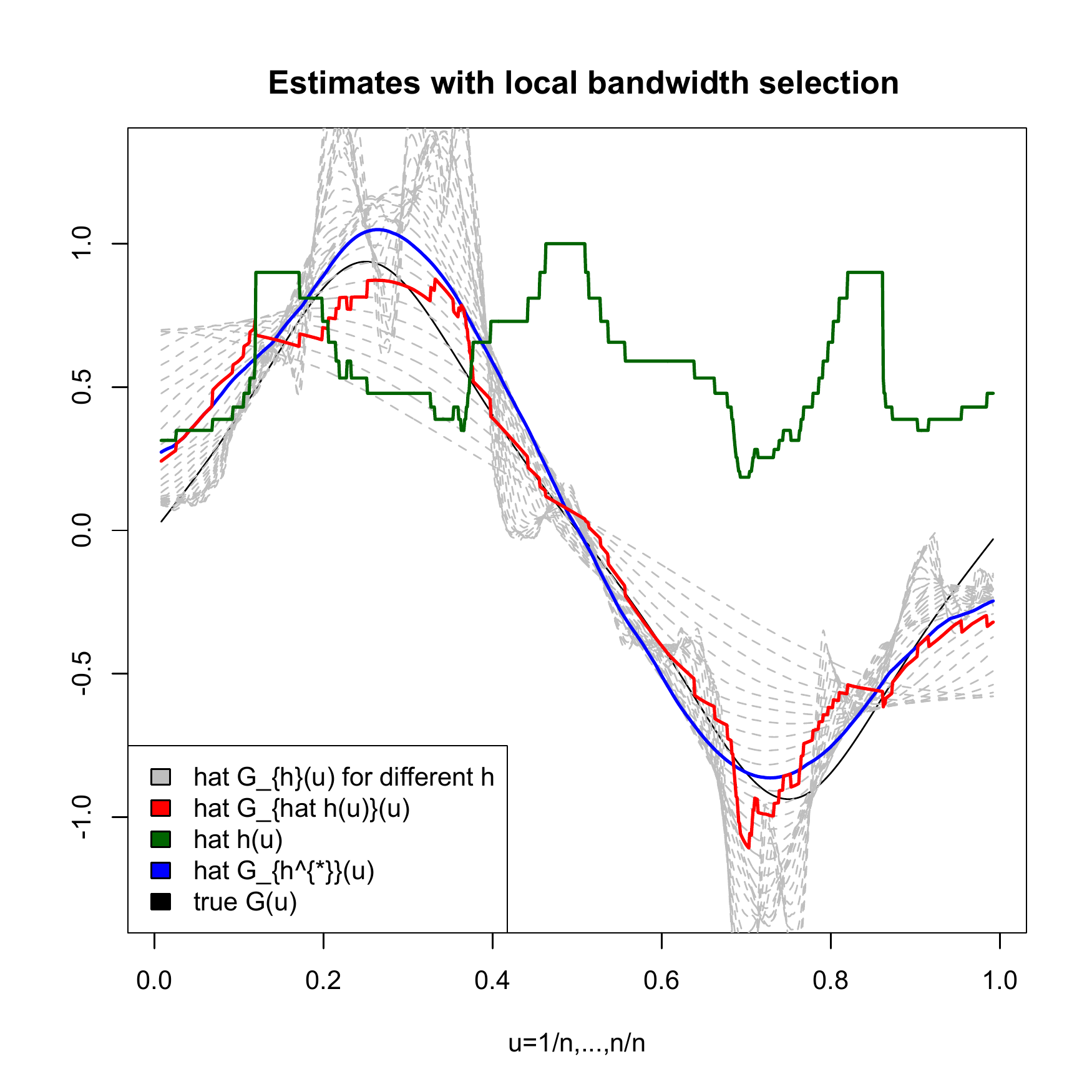}& \includegraphics[width=5.2cm]{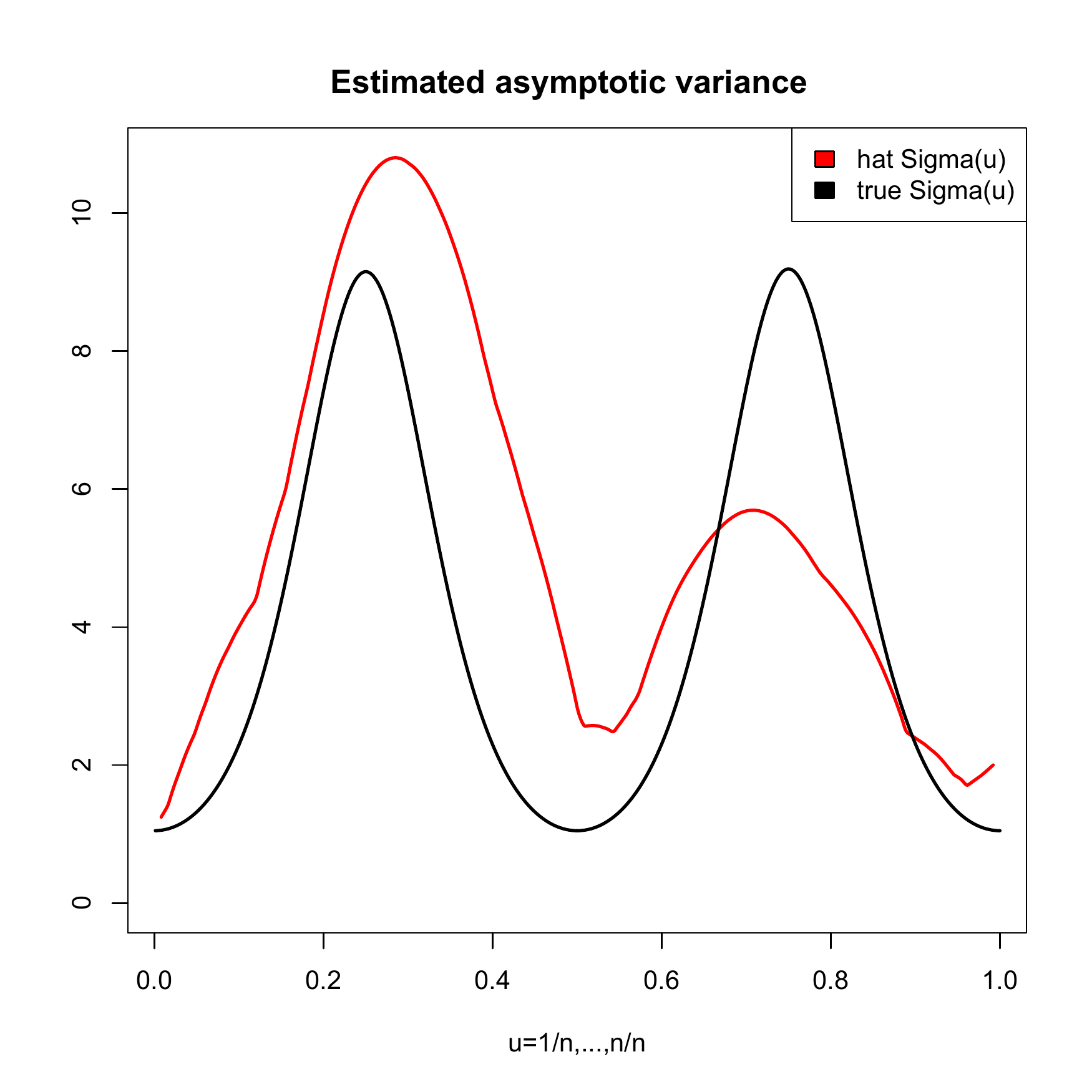}\\
        (b) &  \includegraphics[width=5.2cm]{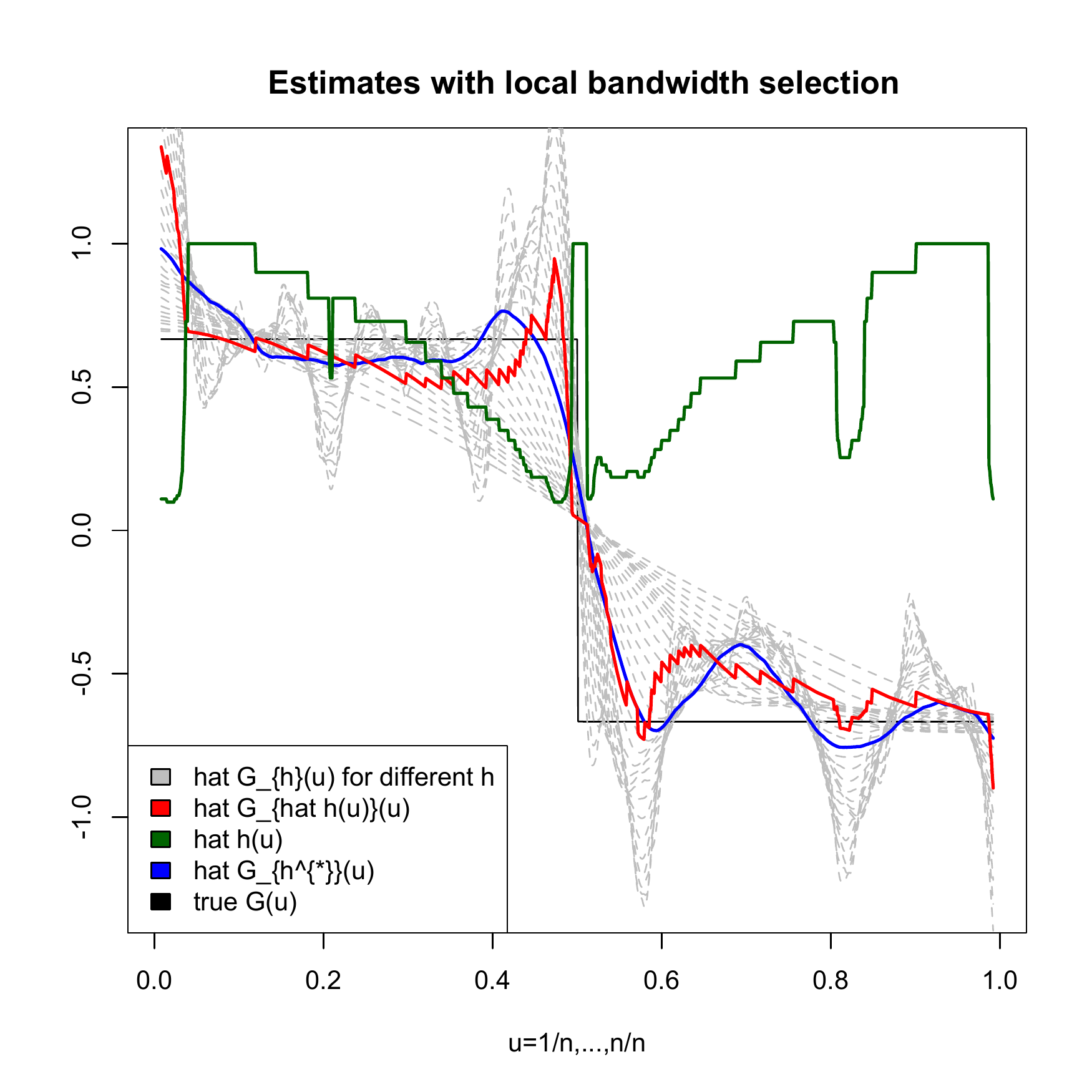}& \includegraphics[width=5.2cm]{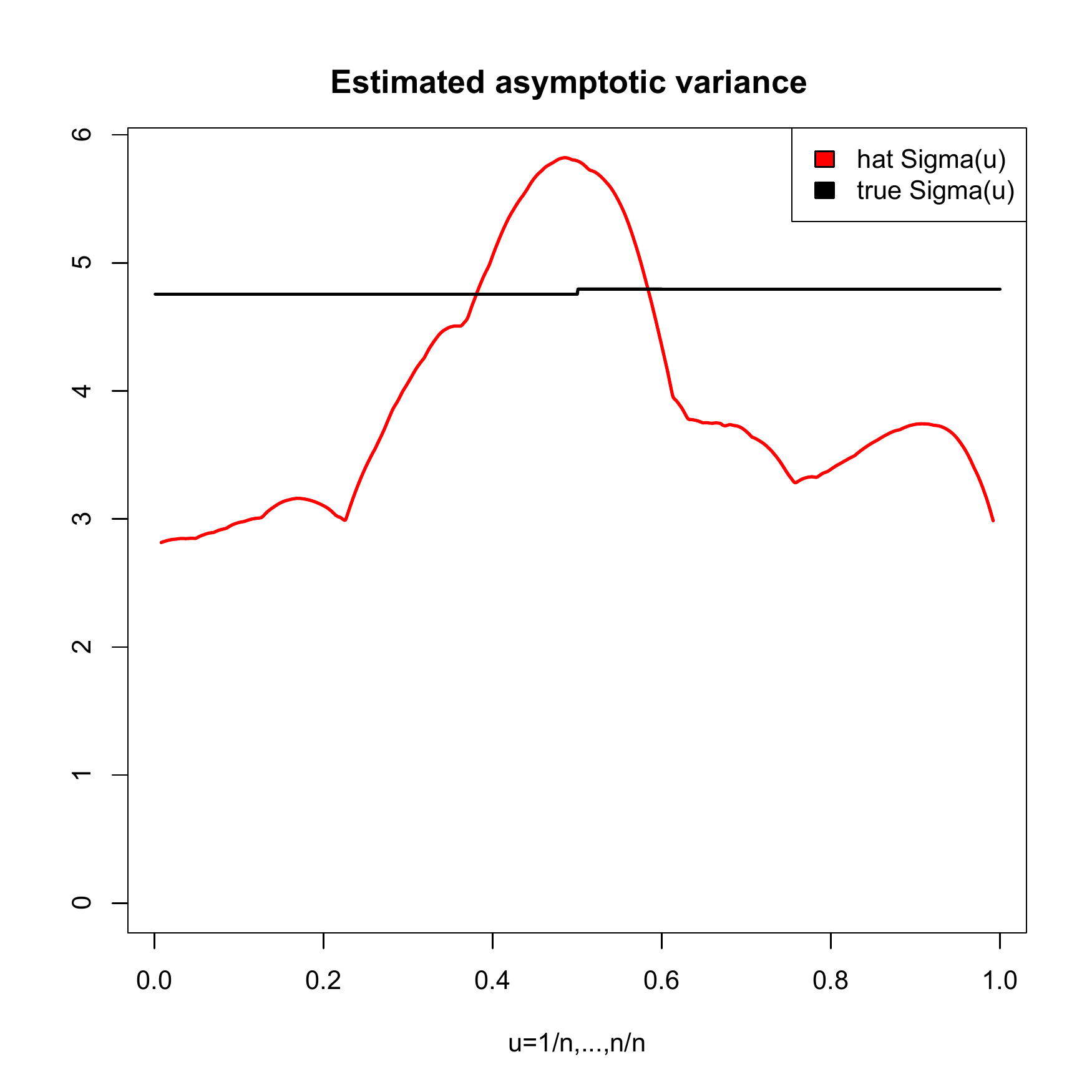}
    \end{tabular}
    \caption{Scenario (a),(b): Behavior of the algorithm for one realization. Left: Plot of $G_{\hat h(u)}(u)$ (red) together with all $G_h(u)$, $h\in H_n$ (grey) and the global optimal choice $G_{h^{*}}(u)$ (blue) against the true value $G(u)$ (black). The green line is $\hat h(u)$. Right: True $\Sigma(u)$ (black) and its estimator $\hat \Sigma_n(u)$ (red).}
    \label{figure_lepski_onerealization}
\end{figure}

In the following we choose the weight function $w(u) = \Ii_{[0.05,0.95]}(u)$ and compare the integrated distance $d_M^{local}(h(u)) := \int_0^{1} |\hat G_{h(u)}(u) - G(u)|_2^2 w(u) \dif u$ for the local selector $h(u) = \hat h(u)$ and the global optimal selector $h(u) = h^{*}$ which is not known in practice. The results are summarized in Figure \ref{figure_lepski_all} via boxplots (right) and 0.05- and 0.95-quantile curves (left). It should be noted that even comparable results of $d_M^{local}(\hat h(u))$ with $d_M^{local}(h^{*})$ are worth to be emphasized since $\hat h(u)$ suffers from the typical higher variance of a data-driven selection procedure which is not the case for $h^{*}$.
In case (a), we see that the quantile curves of both selection procedures are comparable with a little bit higher variance around the peaks . The global optimal bandwidth clearly outperforms $\hat h(u)$ since $G$ has nearly smoothness properties over the whole time line.

In the case (b), it can be seen from the quantile curves that for $u$ near $0$ and $1$, the local selector outperforms the global optimal selector since it is able to choose larger bandwidths there, reducing the variance of $\hat G_{\hat h(u)}(u)$. For $u$ around $0.5$, $\hat G_{\hat h(u)}(u)$ manages to mimic the step of the true function $G(u)$ better than the global selector $h_{opt}(u)$.

\begin{figure}
    \centering
    \begin{tabular}{ccc}
        (a) &  \includegraphics[width=5.2cm]{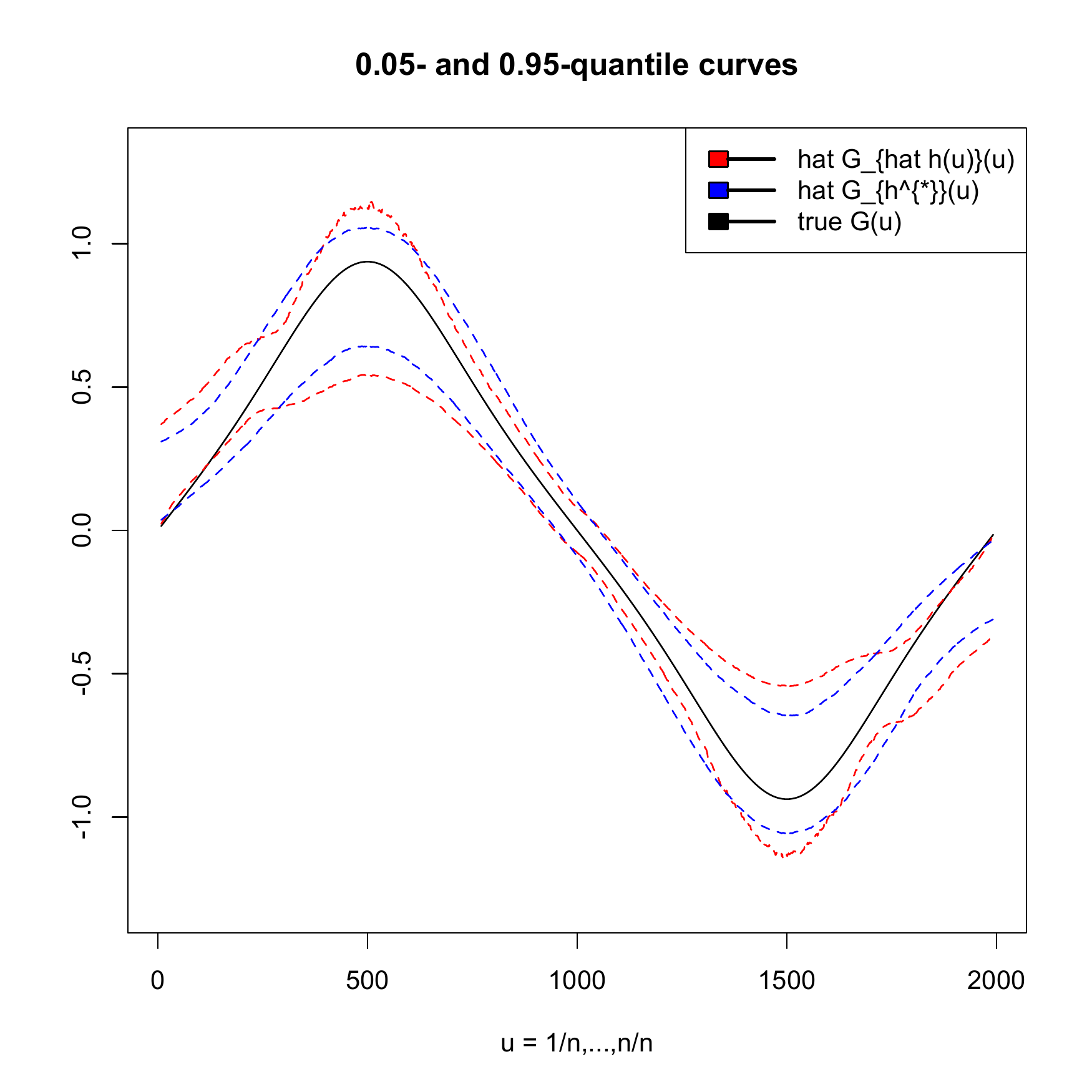}& \includegraphics[width=5.2cm]{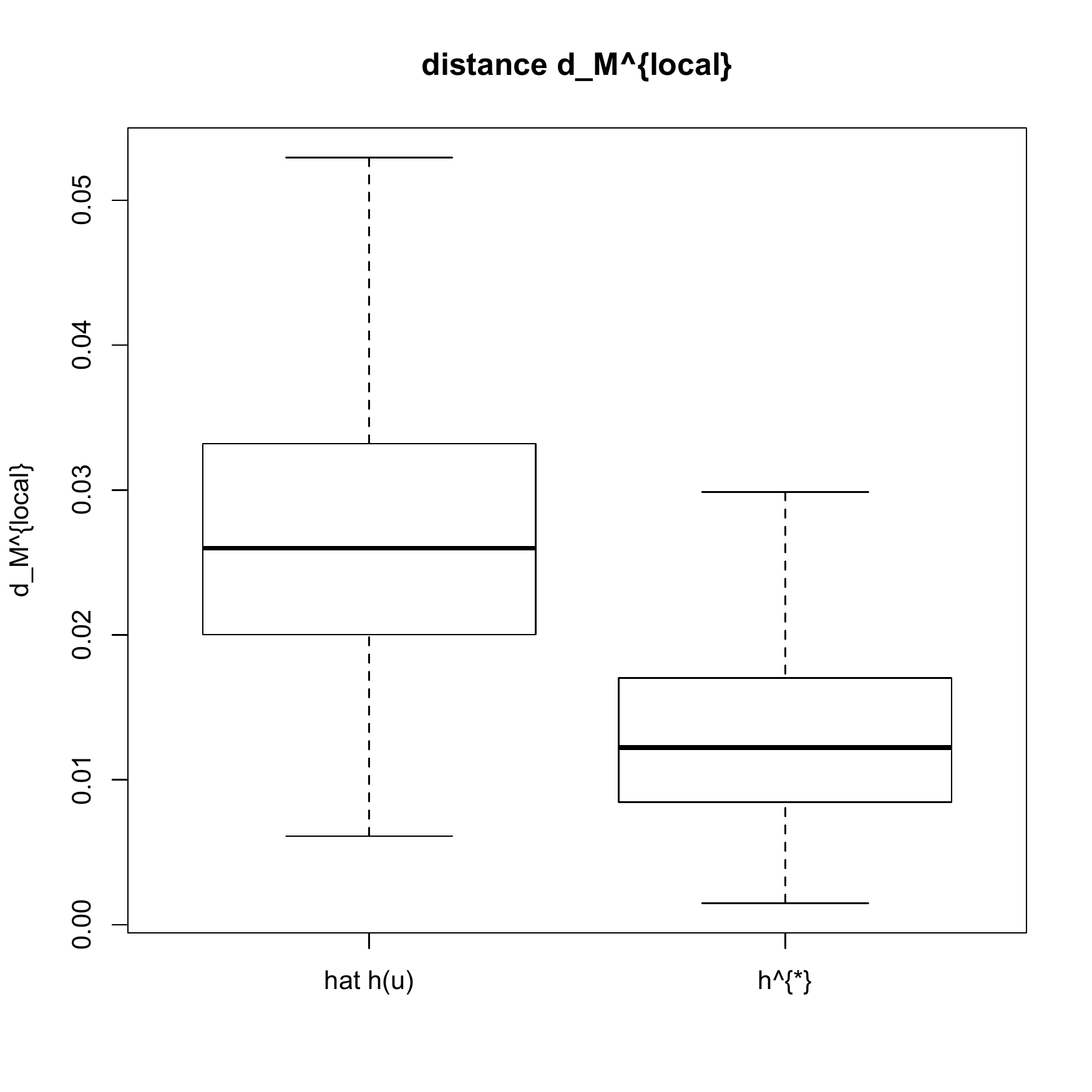}\\
        (b) &  \includegraphics[width=5.2cm]{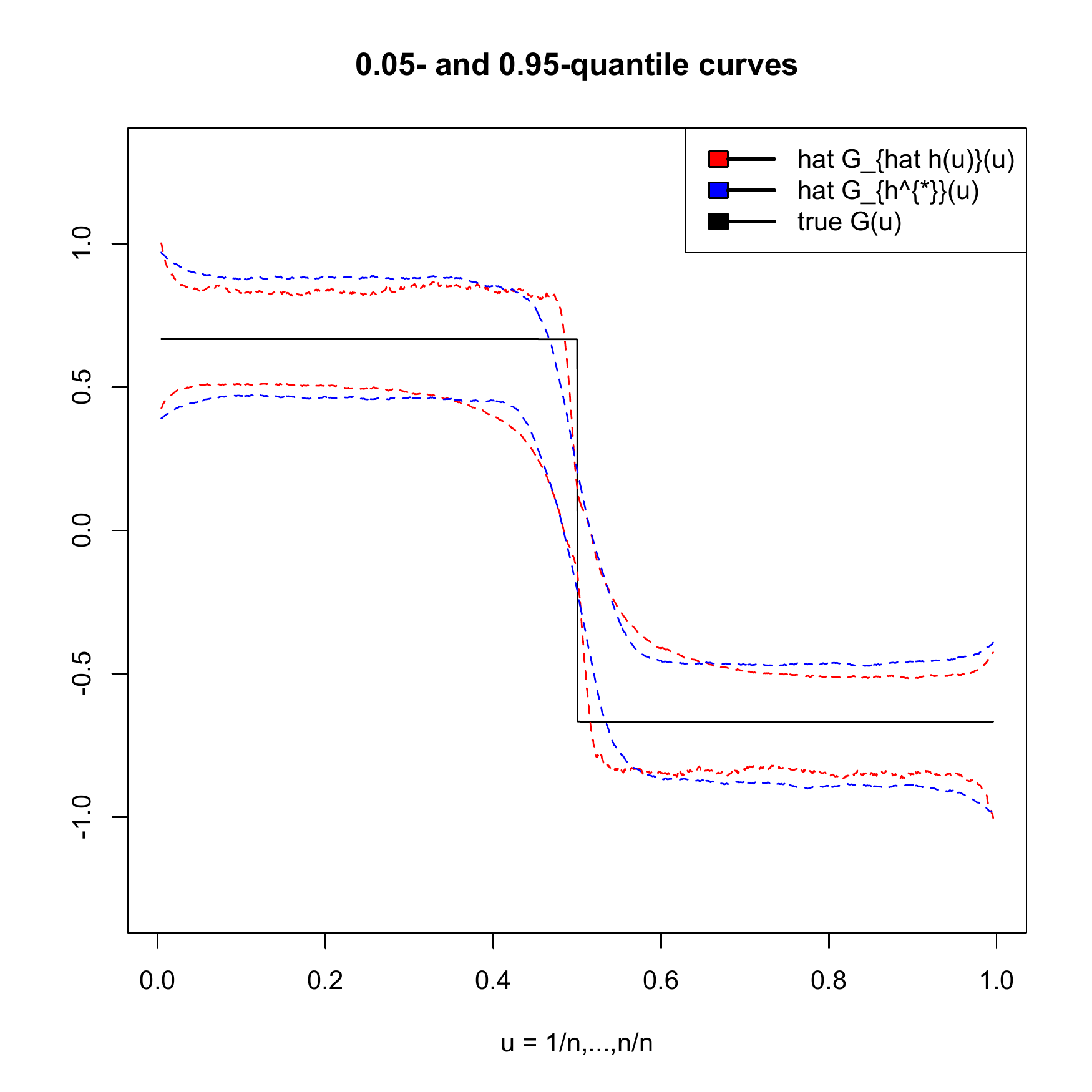}& \includegraphics[width=5.2cm]{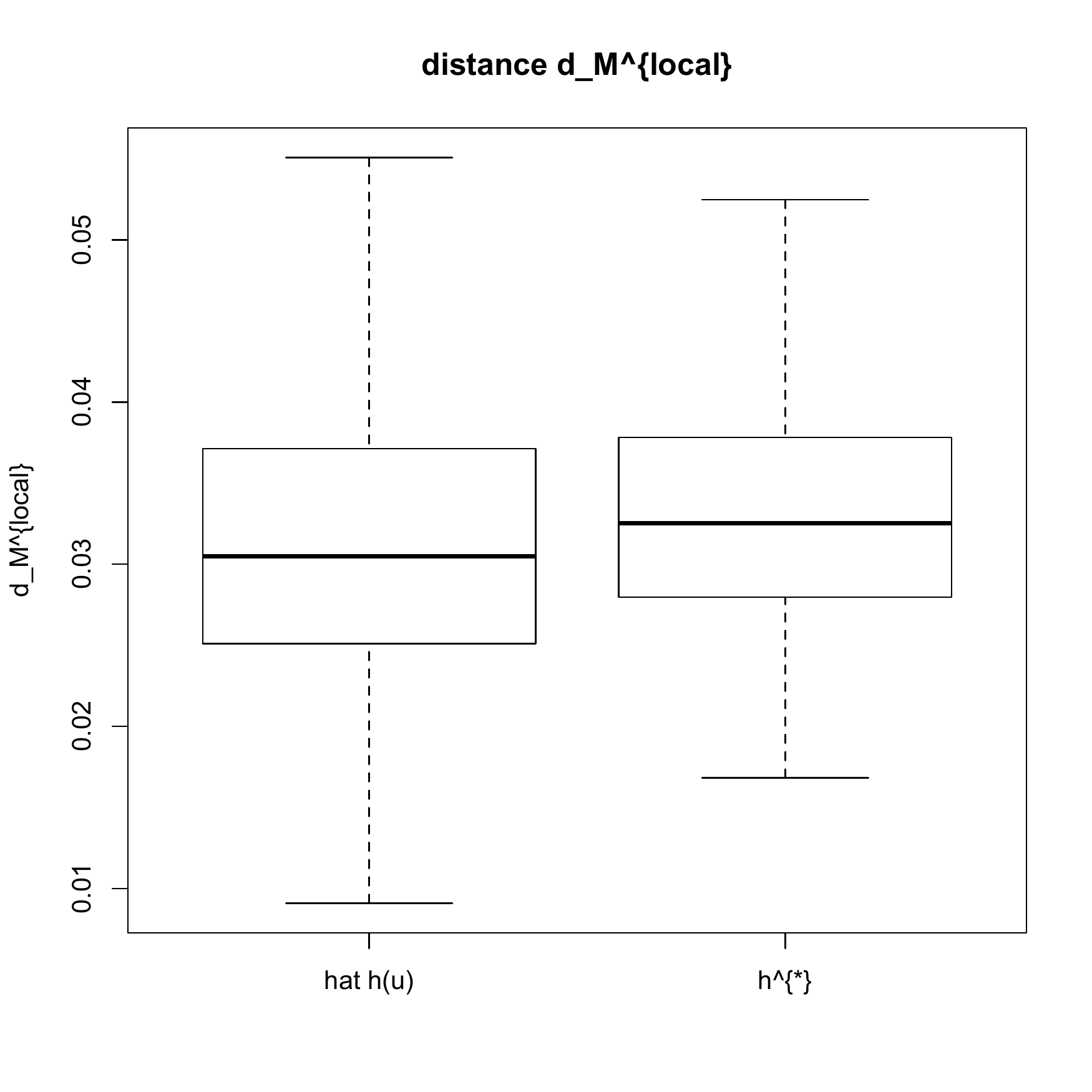}
    \end{tabular}
    \caption{Scenario (a),(b): Behavior of the local bandwidth selector for $N = 2000$ replications. Left: 0.05- and 0.95-quantile curves of $G_{\hat h(u)}(u)$ (red) and the global optimal choice $G_{h^{*}}(u)$ (blue) against the true value $G(u)$ (black). Right: Boxplots of the achieved distances $d_M^{local}(\hat h(u))$ and $d_M^{local}(h^{*})$.}
    \label{figure_lepski_all}
\end{figure}

\section{Conclusion}
\label{sec5}
In this paper, we have developed two methods for bandwidth selection for nonparametric moment estimators of locally stationary processes of some curve $G(u)$. We have derived theoretical results for their optimality with respect to mean-squared-error type distance measures and found with simulations that they work quite well in practice. The first method is  based on a cross validation approach and allows for global bandwidth selection. A critical issue is to deal with the dependency of the observed time series which is controlled by some  quantity $\alpha$. We have given theoretical and heuristical motivations how to choose this value to obtain a stable procedure. It should be emphasized that this choice is rather  straightforward and data-dependent and therefore should not be considered as a tuning parameter. In a series of remarks, we have shown how to generalize our method to more general settings, for instance if $G(u)$ is a composition of moment functions.

The second method is for local bandwidth selection, i.e. for each time point a different bandwidth is chosen. This allows for taking into account local smoothness properties of the unknown curve. The method needs an estimator $\hat \Sigma_n(u)$ of the asymptotic long-run variance and is dependent on a parameter $C^{\#}$. We have seen that the quality of $\hat \Sigma_n(u)$ does not influence the bandwidth selection procedure very much, while the choice of $C^{\#}$ however is crucial to obtain meaningful results. In our theoretical results we have shown that $C^{\#}$ is some universal constant which does not need to depend on the unknown quantities to be estimated. However, the conditions stated in theory may lead to too conservative estimates (i.e. the variance is reduced too strongly by introducing a large bias). In our simulations we have given ad hoc choices of $C^{\#}$ which work fairly well, but in more specific applications it may be necessary to adjust $C^{\#}$ further. We have seen in simulations that the local bandwidth selection procedure may outperform global bandwidth selection procedures. Finally, it should be noted that the presented local bandwidth selection procedure reduces the problem of choosing several  tuning parameters separately (all the local bandwidths $h(u)$, $u\in[0,1]$) to a proper choice of $C^{\#}$.

In practice, we propose to first use the cross validation method for a first guess $\hat h$ of the global optimal bandwidth. Afterwards one may  apply the local selection procedure, using $\hat h$ (or the estimator of $G(u)$ based on $\hat h$, respectively) to calibrate $C^{\#}$.

One may try to improve the theory for the presented methods, allowing for more general structures of $G(u)$ and its estimators or investigating $G(u)$ with moments of two sided functions. These questions are left to further research.

\section{Appendix}
\label{sec6}

During the proofs, $c' > 0$ is a generic constant which may depend on
\begin{itemize}
    \item $M,\chi,C$ (from $g \in \sH(M,\chi,C)$), 
    \item $D,\alpha,\rho$ (from Assumption \ref{def_localstat} and Assumption \ref{assumption_dependence}),
    \item $K \in \sK$ (the kernel).
\end{itemize}
Its value may change from line to line. We define
\[
    \tilde G_h(u) := \frac{1}{n}\sum_{t=1}^{n}K_h(t/n-u)\cdot g(\tilde Y_t(t/n))
\]
and $\tilde G_h^{\circ}(u) := (\frac{1}{n}\sum_{t=1}^{n}K_h(t/n-u))^{-1}\tilde G_h(u)$.

\begin{proof}[Proof of Theorem \ref{theorem_bias}] (i) By Lemma \ref{lemma_statapprox}(ii), we have in each component that
%We have in each component that
%\[
%	\Big\|\hat G_h(u) - \frac{1}{n}\sum_{t=1}^{n}K_h(t/n-u)\cdot g(\tilde Y_t(t/n))\Big\|_1 \le  \frac{1}{n}\sum_{t=1}^{n}|K_h(t/n-u)|\cdot \|g(Y_{t,n}) - g(\tilde Y_t(t/n))\|_1.
%\]
%By Hoelder's inequality,
%\begin{eqnarray*}
%	&& \|g(Y_{t,n}) - g(\tilde Y_t(t/n))\|_1\\
%	&\le&  C (1 + 2C^{M-1})\cdot \Big[\sum_{i=1}^{t}\chi_i \|X_{t-i+1,n} - \tilde X_{t-i+1}((t-i+1)/n))\|_M\\
%	&&\quad\quad\quad + \sum_{i=1}^{t}\chi_i \|\tilde X_{t-i+1}((t-i+1)/n)) - \tilde X_{t-i+1}(t/n)\|_M + \sum_{i=t+1}^{\infty}\chi_i \|\tilde X_{t-i+1}(t/n)\|_M\Big]\\
%	&\le& C^2(1+2C^{M-1})\cdot \Big[n^{-1} \cdot \sum_{i\in\N}\chi_i + n^{-1}\sum_{i\in\N}i\cdot \chi_i + \sum_{i=t+1}^{\infty}\chi_i\Big],
%\end{eqnarray*}
%such that
\[
	\sup_{u\in[0,1]}\Big\|\hat G_h(u) - \frac{1}{n}\sum_{t=1}^{n}K_h(t/n-u)\cdot g(\tilde Y_t(t/n))\Big\|_1 \le c' (nh)^{-1}.
\]
Since $u \mapsto \tilde X_t(u)$ is twice differentiable,  the same holds for each component of $u \mapsto g(\tilde Y_t(u))$ by using the chain rule. We conclude that
\begin{eqnarray*}
	g(\tilde Y_t(u')) &=& g(\tilde Y_t(u)) + (u' - u)\cdot \partial_u g(\tilde Y_t(u)) + \frac{1}{2}(u'-u)^2 \cdot \partial_u^2 g(\tilde Y_t(u))\\
	&&\quad + \int_{u}^{u'}(s-u) \cdot \big\{\partial_u^2 g(\tilde Y_t(s)) - \partial_u^2 g(\tilde Y_t(u))\big\} \dif s,
\end{eqnarray*}
and
\begin{eqnarray}
	&&\IE\Big[\frac{1}{n}\sum_{t=1}^{n}K_h(t/n-u)\cdot g(\tilde Y_t(t/n))\Big] - G(u)\nonumber\\
	&=& \Big(\frac{1}{n}\sum_{t=1}^{n}K_h(t/n-u)\cdot \IE g(\tilde Y_t(u)) - G(u)\Big)\\
	&&\quad\quad + \frac{1}{n}\sum_{t=1}^{n}K_h(t/n-u)\cdot (t/n-u)\cdot \IE \partial_u g(\tilde Y_t(u))\nonumber\\
	&&\quad\quad + \frac{1}{2n}\sum_{t=1}^{n}K_h(t/n-u)\cdot (t/n-u)^2\cdot \IE \partial_u^2 g(\tilde Y_t(u))\nonumber\\
	&&\quad\quad + \frac{1}{n}\sum_{t=1}^{n}K_h(t/n-u)\int_{u}^{t/n}(s-u)\IE\big[\partial_u^2 g(\tilde Y_t(s)) - \partial_u^2 g(\tilde Y_t(u))\big] \dif s.\label{theorem_bias_eq1}
\end{eqnarray}
Due to Lipschitz continuity and symmetry of $K$, the first two terms in \reff{theorem_bias_eq1} are $\le \text{const.} (nh)^{-1}$. The third term in \reff{theorem_bias_eq1} is $\frac{h^2}{2}\mu_K \cdot \partial_u^2 G(u) + O((nh)^{-1})$.

Due to Assumption \ref{assumption_derivative_process}, it can be seen with Hoelder's inequality that \[
    \|\sup_{u\in[0,1]}|\partial_u^2 g(\tilde Y_t(u))| \|_1 < \infty.
\]
The dominated convergence theorem yields that the last term in \reff{theorem_bias_eq1} is $o(h^2)$, giving the final result.

(ii) By Lemma \ref{lemma_statapprox}(ii), we have in each component that
\[
    |\|\hat G_h(u) - G(u)\|_2 - \|\tilde G_h(u) - G(u)\|_2| \le \|\hat G_h(u) - \tilde G_h(u)\|_2 \le c' (nh)^{-1}.
\]
We furthermore have
\begin{equation}
    \IE|\tilde G_h(u) - G(u)|_2^2 = \IE|\tilde G_h(u) - \IE \tilde G_h(u)|_2^2 + |\IE \tilde G_h(u) - G(u)|_2^2.\label{theorem_bias_eq2}
\end{equation}
Similar as in (i), we obtain
\begin{equation}
    \IE \tilde G_h(u) - G(u) = \frac{h^2}{2}\mu_K \partial_u^2 G(u) + O((nh)^{-1}) + o(h^2).\label{theorem_bias_eq3}
\end{equation}
By Lemma \ref{lemma_variance_calc}, we have
\begin{eqnarray}
    &&\IE|\tilde G_h(u) - \IE \tilde G_h(u)|_2^2
    = \sum_{j=1}^{d}\Var(\tilde G_h(u)_j)\nonumber\\
    &=& \frac{1}{nh}\sigma_K^2 \sum_{j=1}^{d}\sum_{k\in\Z}\Cov(g_j(\tilde Y_0(u)), g_j(\tilde Y_k(u))) + O((nh)^{-2}).\label{theorem_bias_eq4}
\end{eqnarray}
Inserting \reff{theorem_bias_eq4} and \reff{theorem_bias_eq3} into \reff{theorem_bias_eq2}, we obtain the result.

\end{proof}

\subsection{Proof of Theorem \ref{thm_crossval}}

\begin{proof}[Proof of Theorem \ref{thm_crossval}]
	We use the proof techniques from \cite{richterdahlhaus2018aos}. Choosing $\ell(x,y,\theta) = \frac{1}{2}(g(x,y) - \theta)^2$, we find that $\theta \mapsto L(u,\theta) := \IE \ell(\tilde X_0(u),\tilde Y_{-1}(u),\theta)$ is uniquely minimized by $G(u) = \IE g(\tilde Y_0(u))$.\\
	Assumptions 3.1, 3.3, 3.4 and 3.7 easily follow from the assumptions stated at the beginning (weight function $w$..., set of bandwidths $H_n$, $G(u)$ twice continuously diff, conditions on $g$, twice cont. ...). It can be easily seen that Assumption 3.7(3) therein is only needed in the version we ask for on $u \mapsto \partial_u^2 g(\tilde Y_0(u))$.\\
	The only condition which is not fulfilled is Assumption 3.3(4) therein, which asks
	\[
		\nabla_{\theta} \ell(\tilde Y_0(u),\theta)\big|_{\theta = G(u)} = G(u) - g(\tilde Y_0(u))
	\]
	to be an uncorrelated sequence. We now follow the main steps of the proof of Theorem 3.6 in \cite{richterdahlhaus2018aos} and emphasize the steps where the condition of uncorrelatedness has to be circumvented.\\
	By (18) in \cite{richterdahlhaus2018aos}, we have uniformly in $h\in H_n$,
	\begin{eqnarray}
		d_M(h) &:=& \IE \int_0^{1}|\hat G_h(u) - G(u)|_2^2 w(u) \dif u\nonumber\\
		&=& \frac{\mu_{K}}{nh} + \frac{h^4}{4}d_K^2 \int_0^{1}|\partial_u^2 G(u)|_2^2 w(u) \dif u + o((nh)^{-1}) + o(h^4).\label{cvproof3}
	\end{eqnarray}
	Using the same arguments as in \cite{richterdahlhaus2018aos}, equation (26) therein, we have
	\begin{equation}
		\sup_{h\in H_n}\Big|\frac{d_A(h) - d_M(h)}{d_M(h)}\Big| \to 0\quad a.s.\label{cvproof5}
	\end{equation}
	Define
	\[
		d_{A,-}(h) := \frac{1}{n}\sum_{t=1}^{n}|\hat G_h^{-}(t/n) - G(t/n)|_2^2 w(t/n).
	\]
	Then we have 
	\begin{eqnarray*}
		|d_{A}(h) - d_{A,-}(h)| &\le& \frac{1}{n}\sum_{t=1}^{n}|\hat G_h(t/n) - \hat G_h^{-}(t/n)|_2^2 w(t/n)\\
		&&\quad\quad\quad + \Big|\frac{2}{n}\sum_{t=1}^{n}\langle G(t/n) - \hat G_h(t/n), \hat G_h(t/n) - \hat G_h^{-}(t/n)\rangle w(t/n)\Big|\\
		&\le& W_{n,h} + 2 W_{n,h}^{1/2}d_{A}(h)^{1/2},
	\end{eqnarray*}
	where with $k_{n,h}(t) := \frac{1}{n}\sum_{s=1}^{n}K_h^{(n)}((s-t)/n)$,
	\[
		W_{n,h} = \frac{1}{n^3}\sum_{t=1}^{n}\Big(\sum_{s=1}^{n}|g(Y_{s,n})|_1 \{K_h-  \frac{K_h^{(n)}}{k_{n,h}(t)}\}((s-t)/n)\Big)^2w(t/n)^2.
	\]
	Note that $W_{n,h}$ is Lipschitz-continuous in the sense that
	\[
		|W_{n,h} - W_{n,h'}| \le C(n)\cdot |h-h'|
	\]
	with some polynomial $C(n)$ in $n$. This allows us to use chaining arguments to prove uniform convergences in $h$. In the following we use the decomposition
	\[
		W_{n,h} \le 4(W_{n,h,1} + W_{n,h,2} + W_{n,h,3}),
	\]
	where with $c(s_1,s_2) := \sum_{t=1}^{n}\{K_h-  \frac{K_h^{(n)}}{k_{n,h}(s_1)}\}((s_1-t)/n)\cdot \{K_h-  \frac{K_h^{(n)}}{k_{n,h}(s_2)}\}((s_2-t)/n) w(t/n)^2$ and $\IE_0(Z) = Z - \IE Z$,
	\begin{eqnarray*}
		W_{n,h,1} &:=& \frac{1}{n^3}\sum_{t=1}^{n}\Big(\sum_{s=1}^{n}\IE |g(\tilde Y_{s}(s/n))|_1 \{K_h-  \frac{K_h^{(n)}}{k_{n,h}(t)}\}((s-t)/n)\Big)^2 w(t/n)^2,\\
		W_{n,h,2} &:=& \frac{1}{n^3}\sum_{s_1,s_2=1}^{n}\IE_0 |g(\tilde Y_{s_1}(s_1/n))|_1 \cdot \IE_0 |g(\tilde Y_{s_2}(s_2/n))|_1 \cdot c(s_1,s_2) ,\\
		W_{n,h,3} &:=& \frac{1}{n^3}\sum_{t=1}^{n}\Big(\sum_{s=1}^{n}|g(Y_{s,n}) - g(\tilde Y_s(s/n))|_1 \{K_h-  \frac{K_h^{(n)}}{k_{n,h}(t)}\}((s-t)/n)\Big)^2 w(t/n)^2.
	\end{eqnarray*}
	Since for all $q \ge 1$, $\sup_{h\in H_n}(nh)\|W_{n,h,3}\|_q \le O(h)$, it is easy to see by a chaining argument that $\sup_{h \in H_n}(nh)|W_{n,h,3}| \to 0$ a.s.\\
	Note that
	\[
	    k_{n,h}(t) - 1 = O((nh)^{-1} + n^{-\alpha}),
	\]
	and for $t \in \{1,...,n\}$ with $w(t/n) \not= 0$,
	\[
	    \sum_{s=1}^{n}\big|K_h((s-t)/n) - \frac{K_h^{(n)}((s-t)/n}{k_{n,h}(t)}\big| = O(n\cdot (n^{-\alpha} + (nh)^{-1})).
	\]
	By Lemma \ref{lemma_exponential_moments}, we have $\sup_{s,n}\IE|g(\tilde Y_{s}(s/n))|_1 =O(1)$, thus with some constant $c' > 0$,
	\[
		|W_{n,h,1}| \le (c')^2 |K|_{\infty}^2 |w|_{\infty}^2 (n^{-2\alpha} + (nh)^{-2}).
	\]
	Therefore we obtain $\sup_{h\in H_n}(nh)|W_{n,h,2}| \to 0$ if $(nh)\cdot n^{-2\alpha} \to 0$.\\
	Using Lemma 8.1(ii) from the Supplementary material of \cite{richterdahlhaus2018aos}, we have with some constants $c',c'' > 0$,
	\begin{eqnarray*}
		\IE W_{n,h,2} &\le& c'\frac{|w|_{\infty}^2}{n^3}\sup_{k\in\IZ}\sum_{1\le t,t+k\le n}|c(t,t+k)| \le c''\frac{|w|_{\infty}^2|K|_{\infty}^2}{n(nh)^2}\cdot n\cdot (n^{-\alpha} + (nh)^{-1})(nh)\\
		&=& c''|w|_{\infty}^2|K|_{\infty}^2 \cdot (n^{-\alpha} + (nh)^{-1})(nh)^{-1},
	\end{eqnarray*}
	which shows that $\sup_{h\in H_n}(nh)|\IE W_{n,h,2}| \to 0$.\\
	Finally, by the same Lemma we obtain for all $q > 0$ with some constant $c' = c'(q) > 0, c'' = c''(q) > 0$:
	\begin{eqnarray*}
		\|W_{n,h,2} - \IE W_{n,h,2}\|_q &\le& c' \frac{1}{n^3}\Big(\sum_{s_1,s_2=1}^{n}c(s_1,s_2)^2\Big)^{1/2}\\
		&\le& c''|K|_{\infty}^2 \frac{(n^{-\alpha} + (nh)^{-1})(nh)}{n(nh)^2}\cdot ( (n^{-\alpha} + (nh)^{-1})(nh)\cdot n)^{1/2}\\
		&\le&  c'' |K|_{\infty}^2 \frac{(n^{-\alpha} + (nh)^{-1})^{3/2}h^{1/2}}{nh},
	\end{eqnarray*}
	showing with a chaining argument that $\sup_{h\in H_n}(nh)|W_{n,h,2} - \IE W_{n,h,2}| \to 0$ a.s.
	In total, we have seen that
	\[
		\sup_{h\in H_n}(nh)|W_{n,h}| \to 0\quad a.s.,
	\]
	and thus with \reff{cvproof3},
	\begin{equation}
		\sup_{h\in H_n}\frac{|d_A(h) - d_{A,-}(h)|}{d_A(h)} \to 0 \quad a.s.\label{cvproof4}
	\end{equation}
	We now analyze the difference
	\[%\frac{1}{n}\sum_{t=1}^{n}|G(t/n) - \hat G_h^{-}(t/n)|_2^2
		2[H(\hat G_h^{-}) - H(G)] - d_{A,-}(h) =  -\frac{2}{n}\sum_{t=1}^{n}\langle g(Y_{t,n}) - G(t/n), \hat G_h^{-}(t/n)-G(t/n)\rangle =: S_{n,h}
	\]
	Following the proof of Lemma 3.16 in \cite{richterdahlhaus2018aos}, we show
	\begin{equation}
		\sup_{h\in H_n}\Big|\frac{2[H(\hat G_h^{-}) - H(G)]-d_{A,-}(h)}{d_{M}(h)}\Big| \to 0\quad a.s.\label{cvproof1}
	\end{equation}
	Define $\bar G_{h}^{-}(u) := \frac{1}{n}\sum_{t=1}^{n} K^{(n)}_h(t/n-u)\cdot g(\tilde Y_t(u))$. Following the proof of Lemma 2.5 in \cite{richterdahlhaus2018aos} ((41) and the discussion of $R_{0,n}$, $R_{1,n}$, $R_{2,n}$, $R_{3,n}$ therein), we obtain that
	\[
		\tilde S_{n,h} :=  -\frac{2}{n}\sum_{t=1}^{n}\langle g(\tilde Y_{t}(t/n)) - G(t/n), \bar G_h^{-}(t/n)-\IE \bar G_h^{-}(t/n)\rangle 
	\]
	fulfills $\sup_{h\in H_n}|\frac{S_{n,h} - \tilde S_{n,h}}{d_M(h)}| \to 0$ a.s. To use the argument for $R_{3,n}$ therein to obtain \reff{cvproof1}, we have to verify that
	\begin{equation}
		\sup_{h\in H_n}\frac{\IE \tilde S_{n,h}}{d_M(h)} \to 0.\label{cvproof2}
	\end{equation}
	We now cannot argue with the uncorrelatedness of $\nabla_{\theta} \ell(\tilde Y_0(u),\theta)\big|_{\theta = G(u)} = G(u) - g(\tilde Y_0(u))$. Instead we use a direct calculation of $\IE \tilde S_{n,h}$: It holds that $\sup_{u\in[0,1]}\delta_2^{g(\tilde Y(u))}(k) \le c' k^{-\kappa}$ with some $c' > 0$ (cf. also \reff{lemma_emp_eq1}), thus
	\begin{eqnarray*}
		|\IE \tilde S_{n,h}| &\le& \frac{2}{n(nh)}\sum_{s,t=1}^{n}|K^{(n)}((s-t)/(nh)|\cdot |\Cov(g(\tilde Y_t(t/n)), g(\tilde Y_s(t/n)))|\\
		&\le& \frac{2}{n(nh)}\sum_{s,t=1}^{n}|K^{(n)}((s-t)/(nh)|\cdot \sup_{u\in[0,1]}\delta_2^{g(\tilde X(u))}(|s-t|)\\
		&\le& \frac{4c'|K|_{\infty}}{n(nh)}\sum_{t=1}^{n} \sum_{k=\lfloor (1-\varepsilon)n^{-\alpha}(nh)\rfloor}^{\infty} k^{-\kappa}\\
		&=& O(\frac{(n^{-\alpha}(nh))^{-\kappa+1}}{nh})
	\end{eqnarray*}
	which shows \reff{cvproof2} if $n^{-\alpha}(nh) \to \infty$.
	
	Using \reff{cvproof1}, \reff{cvproof4} and \reff{cvproof5}, the result now follows along the same lines as the proof of Theorem 3.6 in \cite{richterdahlhaus2018aos}.
\end{proof}

\subsection{Proof of Theorem \ref{upperbound_thm2}}

\begin{proof}[Proof of Theorem \ref{upperbound_thm2}]
	Define $\Delta_h(u) := \sup_{h' \le h}|G_{h'}(u) - G(u)|_2$. Since $h'\in H_n$, for $n$ large enough it holds that
	\[
		\frac{1}{n}\sum_{t=1}^{n}K_{h'}(t/n-u) \ge \frac{1}{2}.
	\]
	Furthermore, with some constant $c' > 0$ only depending on $|K|_{\infty}$, $|G|_{\infty}$ and the corresponding Lipschitz constants of $K,G$:
	\[
		\Big|\frac{1}{n}\sum_{t=1}^{n}K_{h'}(t/n-u)\{G(t/n) - G(u)\} - \int_0^{1}K_{h'}(v-u)\{G(v) - G(u)\} \dif v\Big|_{\infty} \le c' n^{-1}.
	\]
	Since $G$ is twice continuously differentiable,
	\[
		\int_0^{1}K_{h'}(v-u)\{G(v) - G(u)\} \dif v = \frac{h^2}{2}\mu_K\cdot \partial_u^2 G(u) + o(h^2).
	\]
	We obtain
	\begin{eqnarray*}
		\Delta_h(u) &=& \sup_{h'\le h}\Big(\frac{1}{n}\sum_{t=1}^{n}K_{h'}(t/n-u)\Big)^{-1}\cdot \Big|\frac{1}{n}\sum_{t=1}^{n}K_{h'}(t/n-u)\{G(t/n) - G(u)\}\Big|_2\\
		&\le& 2\cdot \frac{h^2}{2}\mu_K\cdot |\partial_u^2 G(u)|_2 + o(h^2),
	\end{eqnarray*}
	and thus for $h$ small enough, $\Delta_h(u) \le 2h^2 \mu_K\cdot |\partial_u^2 G(u)|_2$. Define
	\begin{eqnarray*}
		h_{opt,n}(u) &:=& \argmin_{h > 0}\Big\{\frac{\log(n)}{nh}\sigma_K^2 \cdot \tr(\Sigma(u)) + \frac{h^4}{4}\mu_K^2\cdot |\partial_u^2 G(u)|_2^2\Big\}\\
		&=& \Big(\frac{4\sigma_K^2 \cdot \tr(\Sigma(u))}{\mu_K^2\cdot |\partial_u^2 G(u)|_2^2}\Big)^{1/5}\cdot \Big(\frac{\log(n)}{n}\Big)^{1/5}.
	\end{eqnarray*}
	By Proposition \ref{upperbound}, we have
	\begin{eqnarray}
		\IE |\hat G_{\hat h(u)}^{\circ}(u) - G(u)|_2^2 &\le& c \cdot v^2(\bar h,u) \lambda(\bar h)^2 + c\sum_{h\in H_n}v^2(h_0(u),u) \lambda(h_0(u))^2\Ii_{\{h_0(u) = a\cdot h\}}\nonumber\\
		&&\quad\quad+ c'\cdot \log(n)^2 n^{-1}.\label{upperbound_thm2_eq2}
	\end{eqnarray}
	For the second summand in \reff{upperbound_thm2_eq2} we find, for $n$ large enough, the upper bound
	\begin{eqnarray*}
		&&\sum_{h\in H_n}v^2(h_0(u),u) \lambda(h_0(u))^2\Ii_{\{h_0(u) = a\cdot h\}}\\
		&\le&\sum_{h\in H_n}\min\{v^2(ah,u) \lambda(ah)^2,\frac{1}{D_1^2}\Delta_h(u)^2\}\\
		&\le& \frac{1}{D_1^2}\sum_{h\in H_n, h \le h_{opt,n}(u)}\Delta_h(u)^2 + \sum_{h\in H_n, h > h_{opt,n}(u)}v^2(ah,u)\lambda(ah)^2\\
		&\le& \frac{4}{D_1^2(1-a^4)}h_{opt,n}(u)^4\mu_K^2|\partial_u^2 G(u)|_2^2 + \frac{D_2}{a(1-a)}\sigma_K^2 \tr(\Sigma(u))\frac{\log(n)}{n h_{opt,n}(u)},
	\end{eqnarray*}
	which together with \reff{upperbound_thm2_eq2} gives the result.
\end{proof}

\begin{proposition}\label{upperbound}
	Suppose that Assumption \ref{def_localstat} holds for all $q > 0$, and that Assumption \ref{assumption_sigma} and Assumption \ref{assumption_dependence} hold. Define $G_h(u) := \IE \hat G_h^{\circ}(u)$, and
	\[
		h_{0}(u) := \sup\{h\in H_n:|G_{h'}(u) - G(u)|_2 \le \frac{C^{\#}}{8} v(h,u) \lambda(h)\quad \text{ for all } h'\in H_n, h' \le h\}. 
	\]
	Suppose that
	\[
		H_n = \{a^{-k}:k\in \IN_0\} \cap [c_H\cdot \log(n)^{5+2\alpha M}, 1]
	\]
	with some constant $c_H > 0$ large enough. Then there exists some universal constant $c = c(a) > 0$ and some constant $c' > 0$ (depending on $M,\chi,C,\rho,D,\alpha,a,K$) such that for $n$ large enough,
	\[
		\IE |\hat G_{\hat h(u)}^{\circ}(u) - G(u)|_2^2 \le c \cdot v^2(h_0(u),u) \lambda(h_0(u))^2 + c'\cdot \log(n)^2 n^{-1}.
	\]
\end{proposition}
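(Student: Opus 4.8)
The plan is to run the now-standard two-sided argument for Lepski-type selectors: first restrict to a high-probability event on which the empirical quantities behave, show there that $\hat h(u)$ neither undersmooths nor yields an estimator far from $G(u)$, and then dispose of the complement by crude moment bounds. As preliminaries I would reduce $\hat G_h^{\circ}(u)$ to the stationary-approximation process $\tilde G_h(u)$: for $n$ large the deterministic normalisation $\frac{1}{n}\sum_{t=1}^n K_h(t/n-u)$ lies in $[\tfrac12,2]$ for all $h\in H_n$, and $\|\hat G_h(u)-\tilde G_h(u)\|_q\le c'(nh)^{-1}$ by Assumption \ref{def_localstat}; since $nh\ge n\underline{h}=c_H\log(n)^{5+2\alpha M}$ this error is negligible compared with $v(h,u)\lambda(h)$, uniformly over $h\in H_n$, so the Bernstein inequality of Theorem \ref{lemma_bernstein_kerne} (phrased for $\tilde G_h$) transfers to $\hat G_h^{\circ}-G_h$. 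I would also record two facts about $h_0(u)$: the set in its definition is nonempty for $n$ large (the bias at $\underline{h}$ is $O(\underline{h}^2)$, far below $\tfrac{C^{\#}}{8}v(\underline{h},u)\lambda(\underline{h})$), so $h_0(u)\in H_n$ attains its defining supremum; and because $|\partial_u^2 G(u)|_2\neq 0$, balancing $\tfrac{h^2}{2}\mu_K|\partial_u^2G(u)|_2$ against $\tfrac{C^{\#}}{8}v(h,u)\lambda(h)$ forces $h_0(u)=\Theta((\log(n)/n)^{1/5})$, hence $\lambda(h_0(u))^2=\log(1/h_0(u))\ge\beta\log n$ for some $\beta>0$ and $n$ large — the point that will make the deviation probabilities summable.

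Next I would introduce the good event $\Omega_n$, the intersection of $\{\tfrac12\tr(\Sigma(u))\le\tr(\hat\Sigma_n(u))\le 2\tr(\Sigma(u))\}$ (so $\tfrac{1}{\sqrt2}v(h,u)\le\hat v(h,u)\le\sqrt2\,v(h,u)$ for all $h$) and, for every $h\in H_n$ with $h\le h_0(u)$, of $\{|\hat G_h^{\circ}(u)-G_h(u)|_2\le\tfrac{C^{\#}}{8}v(h,u)\lambda(h)\}$. On $\Omega_n$ the selector does not undersmooth: for any $h'<h_0(u)$ in $H_n$, decomposing $\hat G_{h_0(u)}^{\circ}(u)-\hat G_{h'}^{\circ}(u)$ through $G_{h_0(u)}(u)$, $G(u)$, $G_{h'}(u)$, the two ``stochastic'' pieces are $\le\tfrac{C^{\#}}{8}v(\cdot,u)\lambda(\cdot)$ by the good event and the two ``bias'' pieces are $\le\tfrac{C^{\#}}{8}v(h_0(u),u)\lambda(h_0(u))$ by the definition of $h_0(u)$; since $h\mapsto v(h,u)\lambda(h)$ is nonincreasing and $h'\le h_0(u)$, all four are $\le\tfrac{C^{\#}}{8}v(h',u)\lambda(h')$, hence $|\hat G_{h_0(u)}^{\circ}(u)-\hat G_{h'}^{\circ}(u)|_2\le\tfrac{C^{\#}}{2}v(h',u)\lambda(h')\le C^{\#}\hat v(h',u)\lambda(h')$, so $h_0(u)$ lies in the set of \reff{lokale_wahl_bandbreite} and $\hat h(u)\ge h_0(u)$. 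Given this, I finish by cases: if $\hat h(u)=h_0(u)$ then $|\hat G_{\hat h(u)}^{\circ}(u)-G(u)|_2\le|\hat G_{h_0(u)}^{\circ}(u)-G_{h_0(u)}(u)|_2+|G_{h_0(u)}(u)-G(u)|_2\le\tfrac{C^{\#}}{4}v(h_0(u),u)\lambda(h_0(u))$; and if $\hat h(u)>h_0(u)$ then $h_0(u)$ is an admissible comparison bandwidth in \reff{lokale_wahl_bandbreite}, so $|\hat G_{\hat h(u)}^{\circ}(u)-\hat G_{h_0(u)}^{\circ}(u)|_2\le C^{\#}\hat v(h_0(u),u)\lambda(h_0(u))\le\sqrt2\,C^{\#}v(h_0(u),u)\lambda(h_0(u))$, and adding $|\hat G_{h_0(u)}^{\circ}(u)-G(u)|_2$ gives in all cases $|\hat G_{\hat h(u)}^{\circ}(u)-G(u)|_2^2\le c\,v^2(h_0(u),u)\lambda(h_0(u))^2$ on $\Omega_n$, with $c$ depending only on $a$ (and on the fixed universal $C^{\#}$), since it enters only through these triangle-inequality factors and later geometric sums.

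On $\Omega_n^c$ I would use $\IE\big[|\hat G_{\hat h(u)}^{\circ}(u)-G(u)|_2^2\,\Ii_{\Omega_n^c}\big]\le\|\hat G_{\hat h(u)}^{\circ}(u)-G(u)\|_{2p}^2\,\IP(\Omega_n^c)^{1-1/p}$: since $g\in\sH(M,\chi,C)$ and Assumption \ref{assumption_dependence} gives $\sup_{s,n}\|g(Y_{s,n})\|_{q}<\infty$ for every $q$, one has $\|\hat G_{\hat h(u)}^{\circ}(u)\|_{2p}=O(|H_n|^{1/(2p)})$ for fixed $p$, i.e. polylogarithmic in $n$. For $\IP(\Omega_n^c)$: the $\hat\Sigma_n$-event fails with probability $O(n^{-2})$ by Assumption \ref{assumption_sigma}; for each concentration event I invoke Theorem \ref{lemma_bernstein_kerne} with $\gamma$ a fixed multiple of $(nh)v_j(h,u)\lambda(h)$, the point being that $\underline{h}\ge c_H\log(n)^{5+2\alpha M}/n$ makes $c_4 a_n^{1/3}\gamma^{5/3}$ dominated by $32(nh)^2 v_j^2(h,u)$ in the denominator (this reduces to $nh\gtrsim a_n^2\lambda(h)^{10}$, and $a_n^2\lambda(h)^{10}\lesssim\log(n)^{5+2\alpha M}$), so the Gaussian exponent is of order $(C^{\#})^2\lambda(h)^2$; as only bandwidths $h\le h_0(u)$ enter, $\lambda(h)^2\ge\beta\log n$, and for $C^{\#}\ge 64$ the exponential term is a large enough negative power of $n$ to beat the $O(d\log n)$-fold union bound, while the residual $c_5 n^{-1}/\gamma$ terms sum to $O(n^{-1}/\mathrm{polylog}(n))$. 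Combining, $\IE\big[|\hat G_{\hat h(u)}^{\circ}(u)-G(u)|_2^2\,\Ii_{\Omega_n^c}\big]$ is of smaller order than $v^2(h_0(u),u)\lambda(h_0(u))^2$ and in particular $\le c'\log(n)^2 n^{-1}$, which with the bound on $\Omega_n$ yields the claim.

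The delicate part is the probability estimate, and it rests on two observations that must be made precise. First, all probabilistic control is needed \emph{only} for $h\le h_0(u)$: larger bandwidths, which $\hat h(u)$ may well select, are controlled purely through the definition \reff{lokale_wahl_bandbreite}, not through a concentration bound — this is essential, because $\lambda(h)$ supplies a $\sqrt{\log n}$ margin only once $h\le h_0(u)=\Theta((\log n/n)^{1/5})$, and without it the deviation probabilities at bandwidths of constant order would not be summable. Second, the bookkeeping of constants: one must check that $C^{\#}\ge 64$ makes the Bernstein exponent exceed the threshold against the union bound, and that the lower bound $\underline{h}=c_H\log(n)^{5+2\alpha M}/n$ is precisely what absorbs the super-Gaussian correction $c_4 a_n^{1/3}\gamma^{5/3}$ with $a_n\asymp\log(n)^{\alpha M}$. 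Once these are in place, the Lepski comparison and the complement estimate are routine.
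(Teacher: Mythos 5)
Your good-event construction contains a genuine gap: the entire complement estimate rests on the claim that $\lambda(h)^2\ge\beta\log n$ for all $h\le h_0(u)$, which you derive from $h_0(u)=\Theta((\log n/n)^{1/5})$, and that in turn uses twice differentiability of $G$ at $u$ together with $|\partial_u^2G(u)|_2\neq 0$. Neither hypothesis is part of Proposition \ref{upperbound}; they appear only in Theorem \ref{upperbound_thm2}, which is \emph{deduced from} the proposition. Under the proposition's actual assumptions the bias $|G_{h'}(u)-G(u)|_2$ may be negligible for all bandwidths, in which case $h_0(u)$ is of constant order and $\lambda(h)=O(1)$ for the largest $h\le h_0(u)$. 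Then the Bernstein bound of Theorem \ref{lemma_bernstein_kerne} applied with $\gamma\asymp\frac{C^{\#}}{8}(nh)v_j(h,u)\lambda(h)$ only gives a failure probability of the fixed size $\approx 2d\exp(-c(C^{\#})^2)$, not a negative power of $n$; hence $\IP(\Omega_n^c)$ need not tend to zero at any rate, and your H\"older bound $\|\hat G_{\hat h(u)}^{\circ}(u)-G(u)\|_{2p}^2\,\IP(\Omega_n^c)^{1-1/p}$ is of constant order, far above the required $c\,v^2(h_0(u),u)\lambda(h_0(u))^2+c'\log(n)^2n^{-1}\asymp\log(n)^2 n^{-1}$ in that regime. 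So the argument as written proves a weaker statement (the proposition under an added curvature hypothesis), not the proposition itself. The first half of your argument (no undersmoothing on the good event, and the bound when $\hat h(u)\ge h_0(u)$) is the standard Lepski comparison and is fine.

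The paper avoids this trap by never requiring high-probability control at bandwidths of order $h_0(u)$. It splits on $S(u)=\{\hat h(u)\ge h_0(u)\}$ and bounds the contribution of $S(u)$ \emph{in expectation}: $\IE|\hat G^{\circ}_{h_0(u)}(u)-G_{h_0(u)}(u)|_2^2\le 8v^2(h_0(u),u)+cn^{-1}$ via the variance calculation (Lemmas \ref{lemma_statapprox}, \ref{lemma_variance_calc}), $\IE\hat v^2(h_0(u),u)\le 3v^2(h_0(u),u)$ via Assumption \ref{assumption_sigma}, and the selector's defining inequality plus the bias bound from the definition of $h_0(u)$ — no concentration event at $h_0(u)$ is invoked. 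Only the undersmoothing part $S(u)^c$ uses the Bernstein inequalities, and there exclusively for pairs $h'<h<a\,h_0(u)$, through a layered tail integral in which the deviation probability of size roughly $h'$ (from $\exp(-\lambda(h')^2)$ with $D^{\#}=C^{\#}/8\ge 8$) is multiplied by $v^2(h,u)\log n\asymp\log n/(nh)$ and summed over the geometric grid, yielding $c'\log(n)^2n^{-1}$ without any lower bound on $\lambda(h_0(u))$. If you want to salvage your route, you would have to replace the single good event by exactly this kind of pairwise, weighted treatment of the small-bandwidth deviations and treat the $\hat h(u)\ge h_0(u)$ branch by moments rather than by a probability bound.
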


\begin{proof}[Proof of Proposition \ref{upperbound}]
	We follow the proof strategy of \cite{lepski2011}. During the proof, we use $c' > 0$ for a  constant only dependent on $M,C,D,\alpha,\rho,K$.
	Put
	\begin{eqnarray*}
		v^2(h,u) &:=& \frac{1}{nh}\int K(x)^2 \dif x \cdot \text{tr}(\Sigma(u)),\\
		v^2(h,h',u) &:=&  \frac{1}{n}\int \{K_h(x) - K_{h'}(x)\}^2 \dif x \cdot \text{tr}(\Sigma(u)).
	\end{eqnarray*}
	Define
	\[
		S(u) := \{\hat h(u) \ge h_0(u)\}.
	\]
	Then
	\begin{equation}
		\IE |\hat G_{\hat h(u)}^{\circ}(u) - G(u)|_2^2 = \IE|\hat G_{\hat h(u)}^{\circ}(u) - G(u)|_2^2 \Ii_{S(u)} + \IE|\hat G_{\hat h(u)}^{\circ}(u) - G(u)|_2^2 \Ii_{S(u)^c}.\label{upperbound_eq1}
	\end{equation}
	\emph{Discussion of the first summand in \reff{upperbound_eq1}:} It holds that
	\begin{eqnarray}
		\IE|\hat G_{\hat h(u)}^{\circ}(u) - G(u)|_2^2 \Ii_{S(u)} &\le& 2\IE|\hat G_{\hat h(u)}^{\circ}(u) - \hat G_{h_0(u)}(u)|_2^2 \Ii_{S(u)} + 2\IE |\hat G_{h_0(u)}^{\circ}(u) - G_{h_0(u)}(u)|_2^2\nonumber\\
		&&\quad\quad+ 2|G_{h_0(u)}(u) - G(u)|_2^2.\label{upperbound_eq2}
	\end{eqnarray}
	By Lemma \ref{lemma_statapprox} and Lemma \ref{lemma_variance_calc} and since $ \frac{1}{n}\sum_{t=1}^{n}K_h(t/n-u) \ge \frac{1}{2}$ for $n$ large enough, we have
	\begin{eqnarray}
		\IE |\hat G_{h_0(u)}^{\circ}(u) - G_{h_0(u)}(u)|_2^2 &\le& 8 \IE|\hat G_{h_0(u)}(u) - \tilde G_{h_0(u)}(u)|_2^2 + 8\IE|\tilde G_{h_0(u)}(u) - G_{h_0(u)}(u)|_2^2\nonumber\\
		&\le& 8v^2(h_{0}(u)) + c \cdot n^{-1}.\label{upperbound_eq2_5}
	\end{eqnarray}
	By definition of $\hat h(u)$ and monotonicity of $\lambda(\cdot)$,
	\begin{equation}
		\IE|\hat G_{\hat h(u)}^{\circ}(u) - \hat G_{h_0(u)}^{\circ}(u)|_2^2 \Ii_{S(u)} \le (C^{\#})^2\IE[\hat v^2(h_0(u),u)]\lambda(h_0(u))^2.\label{upperbound_eq3}
	\end{equation}
	We now discuss $\IE[\hat v^2(h_0(u),u)]$. It holds that
	\begin{eqnarray*}
		\IE[\hat v^2(h_0(u),u)\Ii_{\{\tr(\hat \Sigma_n(u)) > 2 \tr(\Sigma(u))\}}] &\le& \|\hat v^2(h_0(u),u)\|_2 \cdot \IP\big(|\tr(\hat \Sigma_n(u)) - \tr(\Sigma(u))| > \tr(\Sigma(u))\big)^{1/2}\\
		&\le& v^2(h_0(u),u)
	\end{eqnarray*}
	for $n$ large enough due to Assumption \ref{assumption_sigma}. We therefore have
	\begin{equation}
		\IE \hat v^2(h_0(u),u) \le 2v^2(h_0(u),u) + \IE[\hat v^2(h_0(u),u)\Ii_{\{\tr(\hat \Sigma_n(u)) > 2 \tr(\Sigma(u))\}}] \le 3 v^2(h_0(u),u).\label{upperbound_eq4}
	\end{equation}
	Using  \reff{upperbound_eq3} and \reff{upperbound_eq4}, we obtain
	\begin{equation}
		\IE|\hat G_{\hat h(u)}^{\circ}(u) - \hat G_{h_0(u)}^{\circ}(u)|_2^2\Ii_{S(u)} \le 3(C^{\#})^2v^2(h_0(u),u)\lambda(h_0(u))^2.\label{upperbound_eq5}
	\end{equation}
	By definition of $h_0(u)$, we have
	\begin{equation}
		|G_{h_0(u)}(u) - G(u)|_2 \le \frac{C^{\#}}{8}v(h_0(u),u)\lambda(h_0(u)).\label{upperbound_eq6}
	\end{equation}
	Inserting \reff{upperbound_eq2_5}, \reff{upperbound_eq5} and \reff{upperbound_eq6} into \reff{upperbound_eq2}, we obtain
	\begin{equation}
		\IE|\hat G_{\hat h(u)}^{\circ}(u) - G(u)|_2^2 \Ii_{S(u)} \le \big[8 + (\frac{C^{\#}}{8})^2 + 3(C^{\#})^2\big] v^2(h_0(u),u)\lambda(h_0(u))^2 + c'\cdot n^{-1}.\label{upperbound_eq24}
	\end{equation}
	\emph{Discussion of the second summand in \reff{upperbound_eq1}:} Let $H_n(h) := \{h' \in H_n: h' < h\}$. By definition of $h_0(u)$ and by monotonicity of $v(\cdot)$, $\lambda(\cdot)$, we obtain for $h' \le h \le h_0(u)$:
	\begin{equation}
		|G_{h'}(u) - G(u)|_2 \le \frac{C^{\#}}{8}v(h_0(u),u)\lambda(h_0(u)) \le \frac{C^{\#}}{8}v(h,u)\lambda(h).\label{upperbound_eq10}
	\end{equation}
	Decompose
	\[
		S(u)^c = \bigcup_{h\in H_n(a\cdot h_0(u))}\bigcup_{h'\in H_n(h)}E(h,h',u),\quad\quad E(h,h',u) := \{|\hat G_h^{\circ}(u) - \hat G_{h'}^{\circ}(u)|_2 > C^{\#}\hat v(h',u)\lambda(h')\}.
	\]
	Let
	\[
		A_1 := \{\text{tr}(\hat \Sigma_n(u)) \ge \frac{1}{2} \text{tr}(\Sigma(u))\},
	\]
	and define
	\begin{eqnarray*}
		N(h,h',u) &:=& (\hat G_h^{\circ}(u) - G_h(u)) - (\hat G_{h'}^{\circ}(u) - G_{h'}(u)),\\
		\tilde N(h,h',u) &:=& (\tilde G_h^{\circ}(u) - G_h(u) - (\tilde G_{h'}^{\circ}(u) - G_{h'}(u)),\\
		N(h,u) &:=& \hat G_h^{\circ}(u) - G_h(u),\\
		\tilde N(h,u) &:=& \tilde G_h^{\circ}(u) - G_h(u).
	\end{eqnarray*}
	We have
	\begin{eqnarray*}
		E(h,h',u) \cap A_1 &\subset& \{|\hat G_h^{\circ}(u) - \hat G_{h'}^{\circ}(u)|_2 > \frac{C^{\#}}{2}v(h',u)\lambda(h')\}\\
		&\subset& \{\frac{2C^{\#}}{8}v(h',u)\lambda(h') + |N(h,h',u)|_2 > \frac{C^{\#}}{2}v(h',u)\lambda(h')\}\\
		&\subset& \{|N(h,h',u)|_2 > C^{\#}(\frac{1}{2} - \frac{2}{8})v(h',u)\lambda(h')\} =: E_0(h,h',u).
	\end{eqnarray*}
	We conclude with the Cauchy-Schwarz inequality, \reff{upperbound_eq10} and Assumption \ref{assumption_sigma}:
	\begin{eqnarray*}
		&& \IE|\hat G_{\hat h(u)}^{\circ}(u) - G(u)|_2^2 \Ii_{S(u)^c}\\
		&\le& \sum_{h\in H_n(a h_0(u))}\sum_{h' \in H_n(h)}\IE|\hat G_{h}^{\circ}(u) - G(u)|_2^2 \Ii_{E(h,h',u)}\\
		&\le& \sum_{h\in H_n(a h_0(u))}\sum_{h' \in H_n(h)}\IE|\hat G_{h}^{\circ}(u) - G(u)|_2^2 \Ii_{E_0(h,h',u)} + c' \log(n)^2 n^{-1}\\
		&\le& \sum_{h\in H_n(a h_0(u))}\sum_{h' \in H_n(h)}\IE\big[ \big(|N(h,u)|_2 + \frac{C^{\#}}{8}v(h,u)\lambda(h_0(u))\big)^2\big] \Ii_{E_0(h,h',u)} + c' \log(n)^2 n^{-1}.
	\end{eqnarray*}
	With Lemma \ref{lemma_statapprox}, we can replace $|N(h,u)|_2$ with $|\tilde N(h,u)|_2$ with  error $O(\log(n)^2 n^{-1})$ due to
	\[
		\IE\big[ (|N(h,u)|_2 - |\tilde N(h,u)|_2)^2\big] \le |\ \|N(h,u) - \tilde N(h,u)\|_2\ |_2^2 \le c'n^{-2}.
	\]
	Similarly, the set $A(h,h') := \{|\ |N(h,h',u)|_2 - |\tilde N(h,h',u)|_2| \le \frac{C^{\#}}{2}(\frac{1}{2}-\frac{2}{8})v(h_0(u),u)\lambda(h_0(u))\}$ has the property
	\[
		\IP(A(h,h')) \le \frac{c' n^{-4}}{(v(h_0(u),u)\lambda(h_0(u)))^4} =O(n^{-2}),
	\]
	allowing to replace $|N(h,h',u)|_2$ by $|\tilde N(h,h',u)|_2$ in $E_0(h,h',u)$ with replacement error $O(\log(n)^2 n^{-1})$. Together with $v(h,h',u) \le v(h',u)$, we have shown that
	\[
		\tilde E_0(h,h',u) := \{|\tilde N(h,h',u)|_2 > \frac{C^{\#}}{2}(\frac{1}{2} - \frac{2}{8})v(h,h',u)\lambda(h')\},
	\]
	fulfills
	\begin{eqnarray}
		 \IE|\hat G_{\hat h(u)}(u) - G(u)|_2^2 \Ii_{S(u)^c} &\le& \sum_{h\in H_n(a h_0(u))}\sum_{h' \in H_n(h)}\IE\big[|\tilde N(h,u)|_2 + \frac{C^{\#}}{8}v(h,u)\lambda(h_0(u))\big]^2 \Ii_{\tilde E_0(h,h',u)}\nonumber\\
		 &&\quad\quad\quad\quad\quad\quad\quad\quad\quad\quad + c' \log(n)^2 n^{-1}.\label{upperbound_eq12}
	\end{eqnarray}
	%((Braucht man ueberhaupt noch $v(h)$ in dem Event $\tilde E_0$?))\\
	
	Put $x := \frac{C^{\#}}{8}\lambda(h_0(u))$ and $D^{\#} := \frac{C^{\#}}{2}(\frac{1}{2}-\frac{2}{8})$. We now discuss the summands in \reff{upperbound_eq12}. It holds that
	\begin{eqnarray}
		&&\IE\big[x + v(h,u)^{-1}|\tilde N(h,u)|_2\big]^2 \Ii_{\{|\tilde N(h,h',u)|_2 > D^{\#}v(h,h',u)\lambda(h')\}}\nonumber\\
		&=& 2\int_0^{\infty}z\cdot \IP(v(h,u)^{-1}|\tilde N(h,u)|_2 > z-x,|\tilde N(h,h',u)|_2 > D^{\#}v(h,h',u)\lambda(h')) \dif z\nonumber\\
		&\le& 2 \int_0^{x + \sqrt{2^7\log(n)}}z\cdot \IP(|\tilde N(h,h',u)|_2 > D^{\#}v(h,h',u)\lambda(h')) \dif z\nonumber\\
		&&\quad\quad + 2 \int_{x+\sqrt{2^7\log(n)}}^{\infty}z\cdot \IP(v(h)^{-1}|\tilde N(h,u)|_2 > z-x) \dif z\nonumber\\
		&=& (x+\sqrt{2^7\log(n)})^2 \IP(|\tilde N(h,h',u)|_2 > D^{\#}v(h,h',u)\lambda(h'))\nonumber\\
		&&\quad\quad + 2\int_{\sqrt{2^7\log(n)}}^{\infty}(x+y)\IP(|\tilde N(h,u)|_2 > v(h,u) y) \dif y.\label{upperbound_eq13}
	\end{eqnarray}
	\emph{Discussion of the first summand in \reff{upperbound_eq13}.} Put $v_j^2(h,u) := \frac{1}{nh}\int K(x)^2 \dif x \cdot \Sigma(u)_{jj}$ and $v_j^2(h,h',u) := \frac{1}{n}\int \{K_h(x) - K_{h'}(x)\}^2 \dif x \cdot \Sigma(u)_{jj}$. Then, with Lemma \ref{lemma_bernstein_kerne2},
	\begin{eqnarray*}
		&&\IP(|\tilde N(h,h',u)|_2 > D^{\#}v(h,h',u)\lambda(h'))\\
		&\le& \IP\Big(\sum_{j=1}^{d}\tilde N(h,h',u)_j^2 > (D^{\#})^2\sum_{j=1}^{d}v_j^2(h,h',u)\lambda(h')^2\Big)\\
		&\le& \sum_{j=1}^{d}\IP\big((nh')|\tilde N(h,h',u)_j|  > D^{\#}(nh')v_j(h,h',u)\lambda(h')\big)\\
		&\le& d\cdot \sup_{j=1,...,d}\Big\{2\exp\Big(-\frac{(D^{\#})^2\lambda(h')^2}{32 + c_4(\frac{D^{\#}a_n}{(nh')v_j(h,h',u)})^{1/3}\lambda(h')^{5/3}(D^{\#})^{5/3}}\Big) + c_5\frac{n^{-1}}{D^{\#}(nh')v_j(h,h',u)\lambda(h')}\Big\}.
	\end{eqnarray*}
By Lemma \ref{lemma_variance_calc}(ii), $c_4(\frac{a_n}{(nh')v_j(h,h',u)})^{1/3}\lambda(h')^{5/3}(D^{\#})^{5/3} \le 32$ is fulfilled if
\begin{eqnarray*}%%%soll groesser $a_n$ sein.
	\big(\frac{32}{c_4}\big)^3\lambda(h')^{-5} \cdot (nh')v_j(h,h',u) &\ge& \big(\frac{32}{c_4}\big)^3 \frac{1}{D_2^{5/2}}\cdot \big[c_6 (a-1)^2 \inf_{j=1,...,d}(\Sigma(u)_{jj})\big]^{1/2}\cdot (\frac{nh'}{\log(n)^5})^{1/2}\\
	&\ge& 2a_n = 2\tilde c_1 \cdot 8^{1/\tau_2} \log(n)^{1/\tau_2},
\end{eqnarray*}
i.e. if $h' \ge c' \cdot \log(n)^{5+\frac{2}{\tau_2}}\cdot n^{-1}$ for $c' > 0$ large enough (which is fulfilled due to $h'\in H_n$).
We obtain that for $h\in H_n$, $h' \in H_n(h)$ with $D^{\#} = 8$:
\begin{equation}
	\IP(|\tilde N(h,h',u)|_2 > v(h,h',u)\lambda(h')) \le 2d\cdot \frac{h'}{\bar h} +  \frac{c_5}{D^{\#}c_6^{1/2} |a-1|\inf_{j=1,...,d}(\Sigma(u)_{jj})^{1/2}}\cdot n^{-1}.\label{upperbound_eq20}
\end{equation}
	\emph{Discussion of the second summand in \reff{upperbound_eq13}.} We have by Lemma \ref{lemma_bernstein_kerne} that
	\begin{eqnarray*}
		&&\IP(|\tilde N(h,u)|_2 > v(h,u)y)\\
		&\le& \sum_{j=1}^{d}\IP((nh)|\tilde N(h,u)_j| > (nh)v_j(h,u)y)\\
		&\le& d \sup_{j=1,...,d}\Big\{2\exp\Big(-\frac{y^2}{32 + c_4 (\frac{a_n}{(nh)v_j(h,u)})^{1/3} y^{5/3}}\Big) + c_5 \frac{n^{-1}}{(nh)v_j(h,u)y}\Big\}.
	\end{eqnarray*}
	We have with $c_n := \frac{a_n}{(nh)v_j(h,u)}$:
	\[
		\exp\Big(-\frac{y^2}{32 + c_4 (\frac{a_n}{(nh)v_j(h,u)})^{1/3} y^{5/3}}\Big) \le \begin{cases}
			\exp(-\frac{y^2}{64}), & y \le (\frac{32}{c_4})^{3/5}c_n^{-1/5} =: d_n,\\
			\exp\big(-\frac{1}{2c_4}(\frac{y}{c_n})^{1/3}\big), & y > d_n
		\end{cases}
	\]
	Thus
	\begin{eqnarray}
		&& \int_{\sqrt{2^7\log(n)}}^{\infty}(x+y)\IP(|\tilde N(h,u)|_2 > v(h,u) y) \dif y\nonumber\\
		&\le& 4d\sup_j\big[\int_{\sqrt{2^7\log(n)}}^{d_n}(x+y)\exp(-\frac{y^2}{2^6}) \dif y + \int_{d_n}^{\infty}(x+y) \exp\Big(-\frac{1}{2c_4}(\frac{y}{c_n})^{1/3}\Big) \dif y\nonumber\\
		&&\quad\quad\quad\quad\quad\quad + \frac{c_5}{\sigma_K\cdot \inf_{j=1,...,d}(\Sigma(u)_{jj})^{1/2}} n^{-1}\big].\label{upperbound_eq15}
	\end{eqnarray}
	We now discuss the first two summands in \reff{upperbound_eq15}. We have
	\[
		\int_{\sqrt{2^7\log(n)}}^{d_n}(x+y)\exp(-\frac{y^2}{2^6}) \dif y \le [x+d_n] n^{-2},
	\]
	and, with some constant $\tilde c_4 > 0$ only depending on $c_4$,
	\begin{eqnarray*}
		&&\int_{d_n}^{\infty}(x+y)\exp(-\frac{1}{2c_4}(\frac{y}{c_n})^{1/3}) \dif y\\
		&=& c_n \int_{d_n/c_n}^{\infty}(x+c_n z)\exp(-\frac{1}{2c_4}z^{1/3}) \dif z\\
		&\le& \tilde c_4 \cdot c_n\cdot \exp\Big(-\frac{1}{2c_4}\Big(\frac{d_n}{c_n}\Big)^{1/3}\Big)\cdot \Big[x\cdot \big((\frac{d_n}{c_n})^{2/3}+1\big)) + c_n\big((\frac{d_n}{c_n})^{5/3} + 1\big)\Big].
	\end{eqnarray*}
	Here,
	\begin{equation}
		\frac{1}{2c_4}(\frac{d_n}{c_n})^{1/3} = \frac{1}{2}(\frac{16}{c_4^6})^{1/5}c_n^{-2/5} =  \frac{1}{2}\big[\frac{16\int K(x)^2 \dif x \cdot \Sigma(u)_{jj}}{c_4^6}\cdot\frac{nh}{a_n^2}\big]^{1/5}.\label{upperbound_eq16}
	\end{equation}
	We conclude that \reff{upperbound_eq16} is $\ge 2\log(n)$ if $h \ge c' \cdot \log(n)^{5+\frac{2}{\tau_2}}\cdot n^{-1}$ for $c' > 0$ large enough (which is fulfilled due to $h\in H_n$). Clearly, \reff{upperbound_eq16} is $\le O(n^{1/5})$. Summarizing these results into \reff{upperbound_eq15}, we obtain for all $h \in H_n$ that
	\begin{equation}
		\int_{\sqrt{2^7\log(n)}}^{\infty}(x+y)\IP(|\tilde N(h,u)|_2 > v(h,u) y) \dif y \le c' n^{-1}\label{upperbound_eq21}
	\end{equation}
	with some constant $c' > 0$.
	
	Inserting \reff{upperbound_eq20} and \reff{upperbound_eq21} into \reff{upperbound_eq13} and \reff{upperbound_eq12}, we obtain with $\sum_{h'\in H_n(h)}h' \le \frac{h}{1-a}$:
	\begin{eqnarray}
		&& \IE|\hat G_{\hat h(u)}(u) - G(u)|_2^2 \Ii_{S(u)^c}\nonumber\\
		&\le& \frac{2d}{\bar h}\cdot \sum_{h\in H_n(a h_0(u))}\sum_{h'\in H_n(h)}v^2(h,u)\cdot h' + c'\log(n)^2 n^{-1}\nonumber\\
		&\le& \frac{\int K^2(x) \dif x \cdot \tr(\Sigma(u))}{1-a}\cdot \frac{2d}{\bar h}\cdot \sum_{h\in H_n(a h_0(u))} n^{-1} + c'\log(n)^2 n^{-1}\nonumber\\
		&\le& c' \cdot \log(n)^2 n^{-1}.\label{upperbound_eq23}
	\end{eqnarray}
	By \reff{upperbound_eq24} and \reff{upperbound_eq23}, the result follows.
\end{proof}

\begin{lemma}\label{lemma_exponential_moments}
Assume that $g \in \sH(M,\chi,C)$. Suppose that Assumption \ref{assumption_dependence} holds. Put $\tau_2 = (\alpha M)^{-1}$. Then there exist constants $\tilde c_1,\tilde c_2 > 0$ only depending on $M,\chi,C,D$ such that for $q \ge 2$:
	\begin{eqnarray*}
		\big\|g(\tilde Y_t(u))\big\|_q &\le& \tilde c_1 N_{\alpha}(qM)^{M},\\
		\IE \exp\Big[\frac{1}{2}\Big(\frac{|g(\tilde Y_t(u))|}{\tilde c_1}\Big)^{\tau_2}\Big] &\le& \tilde c_2,
	\end{eqnarray*}
\end{lemma}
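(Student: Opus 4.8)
The plan is to first convert the Lipschitz-type bound $g\in\sH(M,\chi,C)$ into a pointwise polynomial growth bound, then pass to $q$-th moments using Assumption~\ref{assumption_dependence}(i), and finally feed the resulting moment bound into the exponential power series. Comparing $g$ with the zero sequence gives $|g(x)|\le |g(0)|+C|x|_\chi(1+|x|_\chi^{M-1})\le c_0(1+|x|_\chi^{M})$ for all $x\in\IR^{\IN}$, where $c_0$ depends only on $M,C$ (and on the fixed finite value $g(0)$; one may also subtract a constant to assume $g(0)=0$). Raising to a power $q\ge 1$ gives $\IE|g(\tilde Y_t(u))|^{q}\le c_0'^{\,q}\big(1+\IE\,|\tilde Y_t(u)|_\chi^{\,qM}\big)$ with $c_0'$ of the same type. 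Now $|\chi|_1:=\sum_i\chi_i<\infty$ because $\chi_i=O(i^{-2-\varepsilon})$, and $|\tilde Y_t(u)|_\chi=\sum_i\chi_i|\tilde X_{t-i+1}(u)|$ is a $\chi$-weighted absolute sum over the strictly stationary sequence $(\tilde X_s(u))_s$; hence, by Minkowski's inequality, stationarity, and Assumption~\ref{assumption_dependence}(i), $\IE\,|\tilde Y_t(u)|_\chi^{\,p}\le (D|\chi|_1)^{p}N_\alpha(p)$ for every $p\ge 1$. Taking $p=qM$ and absorbing constants (using $N_\alpha(qM)\ge\Gamma(2)=1$) yields the sharp moment bound $\IE|g(\tilde Y_t(u))|^{q}\le\tilde c_1^{\,q}N_\alpha(qM)$ for $q\ge 1$; since $N_\alpha(qM)^{1/q}\le N_\alpha(qM)^{M}$ for $q\ge 2$, $M\ge 1$, this contains the first assertion.

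For the exponential moment, put $W:=|g(\tilde Y_t(u))|/\tilde c_1\ge 0$, so that $\IE\,W^{q}\le N_\alpha(qM)=\Gamma(\alpha qM+2)$, and expand
\[
  \IE\exp\!\Big[\tfrac12 W^{\tau_2}\Big]=\sum_{k=0}^{\infty}\frac{2^{-k}}{k!}\,\IE\,W^{\tau_2 k}.
\]
The key point is the identity $\alpha\,\tau_2\,M=1$, which is precisely why $\tau_2=(\alpha M)^{-1}$ was chosen: for every $k$ with $\tau_2 k\ge 1$ it gives $\IE\,W^{\tau_2 k}\le\Gamma(\alpha\tau_2 kM+2)=\Gamma(k+2)=(k+1)!$, while the finitely many indices with $\tau_2 k<1$ contribute a bounded amount (Jensen's inequality together with $\IE W\le\Gamma(\alpha M+2)$). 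Therefore
\[
  \IE\exp\!\Big[\tfrac12 W^{\tau_2}\Big]\le\mathrm{const}+\sum_{k\ge 0}\frac{2^{-k}}{k!}(k+1)!=\mathrm{const}+\sum_{k\ge 0}(k+1)2^{-k}=\mathrm{const}+4=:\tilde c_2,
\]
a constant depending only on $M,\chi,C,D$.

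The computations are routine; the point that really needs care is the bookkeeping of the exponents of $N_\alpha$, because the series is borderline: $\sum_k 2^{-k}(k+1)$ converges, but replacing $\Gamma(k+2)$ by $\Gamma(k+2)^{1+\delta}$ for any $\delta>0$ would make it diverge. Hence it is essential that the moment bound be established in the sharp form $\IE|g(\tilde Y_t(u))|^{q}\le\tilde c_1^{\,q}N_\alpha(qM)$ (with the $q$-th power outside and $N_\alpha$ entering to the first power only) rather than in the coarser form $\|g(\tilde Y_t(u))\|_q\le\tilde c_1 N_\alpha(qM)^{M}$, and that $\tau_2$ be taken exactly $(\alpha M)^{-1}$. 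An equivalent way to organize the argument is to reduce to the case $M=1$ via $V:=|g(\tilde Y_t(u))|^{1/M}$, which inherits the same Gamma-type moment bound $\IE|V|^{p}\le\tilde c_1^{\,p/M}N_\alpha(p)$ as the coordinate process $\tilde X_0(u)$.
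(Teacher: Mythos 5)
Your proposal follows essentially the same route as the paper's proof: a Gamma-type moment bound for $g(\tilde Y_t(u))$ derived from the $\sH(M,\chi,C)$ condition and Assumption \ref{assumption_dependence}(i), followed by a term-by-term expansion of the exponential in which the choice $\tau_2=(\alpha M)^{-1}$ turns $\Gamma(\alpha\tau_2 kM+2)$ into $\Gamma(k+2)$, the resulting series $\sum_k (k+1)2^{-k}$ converges, and the finitely many small indices are handled separately (you split at $\tau_2 k\ge 1$, the paper at $\tau_2 q\ge 2$; immaterial). The only structural difference is how the moment bound is produced: you use the pointwise growth estimate $|g(x)|\le c_0(1+|x|_\chi^M)$ plus Minkowski, the paper compares $g(\tilde Y_t(u))$ with $g(0)$ and applies H\"older; these are interchangeable.

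One point deserves to be made explicit, and it is exactly the crux you yourself flag. Your claimed inequality $\IE|\tilde Y_t(u)|_\chi^p\le(D|\chi|_1)^p N_\alpha(p)$ does not follow from Minkowski if Assumption \ref{assumption_dependence}(i) is read literally as a bound on the norm $\|\tilde X_0(u)\|_p=(\IE|\tilde X_0(u)|^p)^{1/p}$: that reading only gives $(D|\chi|_1)^p N_\alpha(p)^p$, and with this coarser bound the exponential series diverges, as you correctly observe. Your step implicitly reads the assumption in the stronger sense $\IE|\tilde X_0(u)|^p\le D^p N_\alpha(p)$. The paper's own proof relies on the same strengthening: its part (i) records only $\|g(\tilde Y_t(u))\|_q\le\tilde c_1 N_\alpha(qM)^M$, yet its series estimate uses $\|g(\tilde Y_t(u))\|_{\tau_2 q}^{\tau_2 q}\le\tilde c_1^{\tau_2 q}\Gamma(q+2)$, which is precisely your sharp form and does not follow from the coarse one. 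So, relative to the intended meaning of Assumption \ref{assumption_dependence}(i), your argument is complete and coincides with the paper's; your bookkeeping remark about the sharp versus coarse form identifies the step the paper leaves implicit rather than introducing a gap of your own.
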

\begin{proof}[Proof of Lemma \ref{lemma_exponential_moments}:] (i) It holds that
	\begin{eqnarray*}
		\| g(\tilde Y_t(u)) - g(0)\|_q &\le& C \sum_{j=1}^{\infty}\chi_j\|\tilde X_{t-j+1}(u)\|_{qM}\cdot\big(1 + |\chi|_1^{M-1} \|\tilde X_t(u)\|_{qM}^{M-1}\big)\\
		&\le& C |\chi|_1 D N_{\alpha}(qM) \cdot \big( 1+ |\chi|_1^{M-1}D^{M-1} N_{\alpha}(qM)^{M-1}\big).
	\end{eqnarray*}
	Since $|g(0)| \le C$, we obtain $\| g(\tilde Y_t(u))\|_q \le \tilde c_1 N_{\alpha}(qM)^{M}$ with some $\tilde c_1$ only depending on $M,\chi,C,D$.
	
	(ii) Define $\lambda = (2 \tilde c_1^{\tau_2})^{-1}$. By a series expansion of $\exp$, we have
	\[
		\IE\exp\big(\lambda |g(\tilde Y_t(u))|^{\tau_2}\big) = \sum_{q=0}^{\infty}\frac{\lambda^{q}\| g(\tilde Y_t(u))\|_{\tau_2 q}^{\tau_2 q}}{q!}.
	\]
	If $\tau_2 q \ge 2$, we have
	\[
		\| g(\tilde Y_t(u)) \|_{\tau_2 q}^{\tau_2 q} \le \tilde c_1^{\tau_2 q}\cdot \Gamma(\alpha q \tau_2 M+2) = \tilde c_1^{\tau_2 q} \Gamma(q + 2).
	\]
	This shows $\sum_{\tau_2 q \ge 2}\frac{\lambda^q \| g(\tilde Y_t(u)) \|_{\tau_2 q}^{\tau_2 q}}{q!} \le \sum_{\tau_2 q \ge 0}^{\infty}(\lambda \tilde c_1^{\tau_2})^q\cdot \frac{\Gamma(q+2)}{\Gamma(q+1)} = \sum_{\tau_2 q \ge 2}\frac{q+1}{2^q} \le 4$.\\
	In the case $\tau_2 q < 2$, we have
	\[
		\| g(\tilde Y_t(u)) \|_{\tau_2 q}^{\tau_2 q} \le \| g(\tilde Y_t(u)) \|_2^{\tau_2 q} \le  \tilde c_1^{\tau_2 q}\Gamma(2\alpha M+2)^{\tau_2q/2} \le \tilde c_1^{\tau_2 q} \Gamma(2\alpha M + 2).
	\]
	This shows $\sum_{\tau_2 q < 2}\frac{\lambda^q \| g(\tilde Y_t(u)) \|_{\tau_2 q}^{\tau_2 q}}{q!} \le \Gamma(2\alpha M+2)\sum_{q=0}^{\infty}\frac{2^{-q}}{q!} = \exp(2^{-1})\Gamma(2\alpha M + 2)$. The result is obtained with $\tilde c_{2} := 4 + \exp(2^{-1})\Gamma(2\alpha M + 2)$.
\end{proof}

\begin{lemma}[Exponential inequality]\label{lemma_exponential}
	Assume that $\phi:[0,1] \to \IR$ is some measurable function, and $g\in \sH(M,\chi,C)$. Suppose that Assumption \ref{assumption_dependence} holds. Define
	\[
		F_n(\phi,g) := \frac{1}{n}\sum_{t=1}^{n}\phi(t/n)\cdot \{g(\tilde Y_t(t/n)) - \IE g(\tilde Y_t(t/n))\}.
	\]
	Put $\tau = \tau(\alpha,M) := (\frac{1}{2}+\alpha M)^{-1}$. Then there exist constants $c_1,c_2 > 0$ only depending on $M,\chi,C,D$ such that
	\begin{enumerate}
		\item[(i)] \[
		\|F_n(\phi,g)\|_q \le c_1(q-1)^{1/2}n^{-1/2}\Big(\frac{1}{n}\sum_{t=1}^{n}\phi(t/n)^2\Big)^{1/2}\cdot N_{\alpha}(qM)^M,
		\]
		\item[(ii)] \[
			\IP(|F_n(\phi,g)| > \gamma) \le c_2\exp\Big[-\frac{1}{4e}\Big(\frac{\sqrt{n}\cdot \gamma}{c_1 (\frac{1}{n}\sum_{t=1}^{n}\phi(t/n)^2)^{1/2}}\Big)^{\tau}\Big].
		\]
	\end{enumerate}
\end{lemma}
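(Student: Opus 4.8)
The plan is to establish (i), a Rosenthal--Burkholder type moment bound based on the functional dependence measure, and then to deduce (ii) from (i) via Markov's inequality together with an optimisation over the order $q$. Throughout write $a_t := \phi(t/n)/n$ and $Z_t := g(\tilde Y_t(t/n)) - \IE g(\tilde Y_t(t/n))$, so that $F_n(\phi,g) = \sum_{t=1}^{n} a_t Z_t$; since $\tilde X_s(u) = H(\sF_s,u)$, each $Z_t$ is a centred, $\sF_t$-measurable function of the innovations, the only non-stationarity being the argument $t/n$, which I will handle by passing to suprema over $u\in[0,1]$.

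For part (i) I would use the classical martingale--blocking decomposition of Wu. Let $P_k X := \IE[X\mid\sF_k] - \IE[X\mid\sF_{k-1}]$, so that $Z_t = \sum_{j\ge 0} P_{t-j}Z_t$ in $L^q$, and, by the standard coupling identity $P_{t-j}Z_t = \IE[\,Z_t - Z_t^{*}\mid\sF_{t-j}\,]$ with $Z_t^{*}$ the copy in which the innovation $\zeta_{t-j}$ is replaced, $\|P_{t-j}Z_t\|_q \le \delta_q^{g(\tilde Y(t/n))}(j) \le \sup_{u\in[0,1]}\delta_q^{g(\tilde Y(u))}(j)$. For each fixed $j$ the reindexed array $m\mapsto a_{m+j}P_m Z_{m+j}$ is a martingale difference sequence, so Burkholder's inequality (with Rio's constant $(q-1)^{1/2}$, valid for $q\ge 2$) combined with the triangle inequality in $L^{q/2}$ gives $\|\sum_t a_t P_{t-j}Z_t\|_q \le (q-1)^{1/2}\big(\sum_t a_t^2\big)^{1/2}\sup_u\delta_q^{g(\tilde Y(u))}(j)$. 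Summing over $j\ge 0$ by Minkowski and using $\big(\sum_t a_t^2\big)^{1/2} = n^{-1/2}\big(\frac{1}{n}\sum_t\phi(t/n)^2\big)^{1/2}$ reduces the whole problem to bounding $\sum_{j\ge 0}\sup_u\delta_q^{g(\tilde Y(u))}(j)$. That estimate is exactly the computation underlying Lemma~\ref{lemma_exponential_moments}(i): apply the $\sH(M,\chi,C)$ inequality to $g(\tilde Y_j(u)) - g(\tilde Y_j^{*}(u))$, split the resulting product by Hölder's inequality at exponents $M$ and $M/(M-1)$, bound the moment factor by Assumption~\ref{assumption_dependence}(i) (so $\||\tilde Y_j(u)|_\chi\|_{qM} \le |\chi|_1 D N_\alpha(qM)$) and the coupling differences $\|\tilde X_{j-i}(u) - \tilde X_{j-i}^{*}(u)\|_{qM} = \delta_{qM}^{\tilde X(u)}(j-i)$ by Assumption~\ref{assumption_dependence}(ii), and then sum over $i\le j$ and over $j$ using $\sum_i\chi_i < \infty$ and $\sum_k\rho^k < \infty$. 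This yields $\sum_{j\ge 0}\sup_u\delta_q^{g(\tilde Y(u))}(j) \le c_1'\,N_\alpha(qM)^M$, which is (i).

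For part (ii) I would apply Markov: $\IP(|F_n(\phi,g)| > \gamma) \le \gamma^{-q}\|F_n(\phi,g)\|_q^q$ for every $q\ge 2$. After the routine simplification of the $N_\alpha$-factors in (i) (which are of polynomial order in $q$), the moment bound takes the shape $\|F_n(\phi,g)\|_q \le c\,B\,q^{1/2+\alpha M}$ with $B$ proportional to $n^{-1/2}\big(\frac{1}{n}\sum_t\phi(t/n)^2\big)^{1/2}$, so $\IP(|F_n(\phi,g)| > \gamma) \le \inf_{q\ge 2}\big(c\,B\,q^{1/2+\alpha M}/\gamma\big)^q$. Choosing $q$ proportional to $(\gamma/B)^{\tau}$ with $\tau = (\tfrac{1}{2} + \alpha M)^{-1}$, so that the base of the power equals $e^{-1}$, produces the announced sub-Weibull tail $c_2\exp\!\big(-\tfrac{1}{4e}(\gamma/B)^{\tau}\big)$; for $\gamma$ so small that the optimal $q$ would fall below $2$ the bound is trivial after enlarging $c_2$ so that its right-hand side exceeds $1$ there.

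The hard part is keeping the dependence on $q$ exact at every step. The class $\sH(M,\chi,C)$ is polynomial of degree $M$, which forces the Hölder split at level $qM$ and is precisely why the dependence-measure sum carries the factor $N_\alpha(qM)^M$; in part (ii) the Burkholder factor $(q-1)^{1/2}$ then has to be balanced against the $q^{\alpha M}$ growth coming from the moments (via Stirling applied to $\Gamma(\alpha qM+2)$) so that the optimal order $q\sim(\gamma/B)^{\tau}$ produces exactly the exponent $\tau = (\tfrac{1}{2}+\alpha M)^{-1}$ and not a different power. A secondary but genuinely needed point is uniformity in $u$: since $t\mapsto g(\tilde Y_t(t/n))$ is a triangular (non-stationary) Bernoulli shift, each $t$-dependent dependence coefficient must be replaced by its supremum over $u\in[0,1]$ before the sums over $j$ (and over $t$) are carried out, which is legitimate because the bounds in Assumption~\ref{assumption_dependence} are uniform in $u$.
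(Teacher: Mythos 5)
Your part (i) follows essentially the same route as the paper: the paper also writes $g(\tilde Y_t(t/n))-\IE g(\tilde Y_t(t/n))$ as the sum of projections $P_{t-k}$, applies Rio's $L^q$-inequality (Burkholder with constant $(q-1)^{1/2}$, plus a direct variance computation for $q=2$) to each fixed-lag martingale-difference array, and bounds $\sup_u\delta_q^{g(\tilde Y(u))}(k)\le \tilde c\,\xi(k)\,N_\alpha(qM)^M$ with $\xi(k)=\sum_{j\le k}\chi_j\delta(k-j+1)$ by exactly the H\"older/$\sH(M,\chi,C)$ computation you describe (this is \reff{lemma_emp_eq1}), so nothing to add there. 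For part (ii) you take a genuinely different, though standard, route: Markov applied to $|F_n(\phi,g)|^q$ with the order $q$ optimized at $q\sim(\gamma/B)^{\tau}$. The paper instead bounds the stretched-exponential moment $\IE\exp(\lambda|F_n(\phi,g)|^{\tau})$ by expanding the exponential, inserting the moment bound term by term, splitting at $\tau q\ge 2$ versus $\tau q<2$, controlling $q^{\tau q/2}\Gamma(\alpha M\tau q+2)/\Gamma(q+1)$ by Stirling, and choosing $\lambda=(4e)^{-1}Z_n^{-\tau}$ so that the series converges; Markov then delivers the constant $\tfrac{1}{4e}$ in the exponent directly. The two methods are essentially equivalent (optimizing Chebyshev over a single $q$ versus summing over all $q$); yours is a bit more elementary, the paper's pins down the numerical constant without a separate optimization step. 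Your version is legitimate because $c_1,c_2$ are only claimed to exist: the exponent your optimization produces, of the form $(e\,c)^{-\tau}(\gamma/B)^{\tau}$, dominates $\tfrac1{4e}\bigl(\gamma/(c_1B)\bigr)^{\tau}$ after enlarging $c_1$, and your treatment of the small-$\gamma$ regime (optimal $q<2$) by inflating $c_2$ corresponds to the paper's handling of the $\tau q<2$ terms of the series. One point to make explicit when writing it out: the ``routine simplification of the $N_\alpha$-factors'' has to be performed at the level of $q$-th roots, i.e.\ you need $\Gamma(\alpha Mq+2)^{1/q}\lesssim (cq)^{\alpha M}$ via Stirling, so that the factor enters the moment bound as growth $q^{1/2+\alpha M}$ of $\|F_n(\phi,g)\|_q$ — this is precisely how the paper itself uses the bound inside its series argument (it bounds $\|F_n(\phi,g)\|_{\tau q}^{\tau q}$ by $(\tau q)^{\tau q/2}D^{\tau q}\Gamma(\alpha M\tau q+2)$), and it is what makes the balancing against the Burkholder factor produce exactly $\tau=(\tfrac12+\alpha M)^{-1}$ rather than a different power; keeping the $\Gamma$-factor as a prefactor of the norm itself would destroy that balance.
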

\begin{proof}[Proof of Lemma \ref{lemma_exponential}] 
	 (i) Let $\delta(k) := D\rho^k$. By Hoelder's inequality, we have with some constant $\tilde c$ only dependent on $M,\chi,C,D$:
\begin{eqnarray}
	&& \big\|g(\tilde Y_t(u)) - g(\tilde Y_t^{*}(u))\big\|_q\nonumber\\
	&\le& C \textstyle\sum_{j=1}^{t}\chi_j \| \tilde X_{t-j+1}(u) - \tilde X_{t-j+1}^{*}(u)\|_{qM} \cdot \big(1 + 2 |\chi|_1^{M-1}D^{M-1} \cdot N_{\alpha}(qM)^{M-1}\big)\nonumber\\
	&\le& \tilde c\textstyle\sum_{j=1}^{t}\chi_j \delta(t-j+1) \cdot N_{\alpha}(qM)^{M}.\label{lemma_emp_eq1}                                                                                                                                                           
\end{eqnarray}
Let $\xi(t) := \sum_{j=1}^{t}\chi_j\cdot \delta(t-j+1)$. Obviously, $\sum_{t=1}^{\infty}\xi(t) = \sum_{j=1}^{\infty}\chi_j\sum_{t=j}^{\infty}\delta(t-j+1) < |\chi|_1 |\delta(\cdot)|_1$. We have shown that the dependence measure fulfills $\delta^{g(\tilde Y(u))}_q(k) \le \tilde c \cdot \xi(k)\cdot N_{\alpha}(qM)^{M}$ and is absolutely summable.

By Theorem 2.1 from \cite{rio2009} for $q > 2$ (and for $q = 2$ directly by calculating the variance of the following term), we have
\begin{eqnarray*}
	\| F_{n}(\phi,g) \|_q &\le& \Big\|\frac{1}{n}\sum_{t=1}^{n}\phi(t/n)\big\{g(\tilde Y_t(t/n)) - \IE g(\tilde Y_t(t/n))\big\}\Big\|_q\\
	&\le& \frac{1}{n}\sum_{k=0}^{\infty}\Big\|\sum_{t=1}^{n}\phi(t/n) P_{t-k}g(\tilde Y_t(t/n))\Big\|_q\\
	&\le& \frac{1}{n}\sum_{k=0}^{\infty}(q-1)^{1/2}\Big|\Big(\sum_{t=1}^{n}\phi(t/n)^2 \|P_{t-k}g(\tilde Y_t(t/n))\|_q^{2}\Big)^{1/2}\Big|_2\\
	&\le& (q-1)^{1/2} n^{-1/2}\Big(\frac{1}{n}\sum_{t=1}^{n}\phi(t/n)^2\Big)^{1/2}\cdot \tilde c \sum_{k=0}^{\infty}\xi(k)\cdot N_{\alpha}(qM)^{M}.
\end{eqnarray*}
(ii) Define $Z_n := \tilde cn^{-1/2}\big(\frac{1}{n}\sum_{t=1}^{n}\phi(t/n)^2\big)^{1/2}\cdot \sum_{k=0}^{\infty}\xi(k)$. By Stirling's formula, we have for all $x \ge 1$:
\[
	\sqrt{2\pi}x^{x-\frac{1}{2}}e^{-x} \le \Gamma(x) \le e^{1/12}\cdot \sqrt{2\pi}x^{x-\frac{1}{2}}e^{-x}.
\]
By Markov's inequality, we have for $\gamma,\lambda > 0$:
\begin{eqnarray*}
	\IP(|F_{n}(\phi,g)| \ge \gamma) \le e^{-\lambda \gamma^{\tau}}\IE[e^{\lambda |F_{n}(\phi,g)|^{\tau}}] = e^{-\lambda\gamma^{\tau}}\sum_{q=0}^{\infty}\frac{\lambda^q \||F_{n}(\phi,g)|_2\|_{\tau q}^{\tau q}}{q!}.
\end{eqnarray*}
In the case $\tau q \ge 2$, we have 
\[
	\frac{\lambda^q \|F_{n}(\phi,g)\|_{\tau q}^{\tau q}}{q!} \le \frac{\lambda^q}{\Gamma(q+1)}(\tau q)^{\frac{\tau q}{2}}D(u)^{\tau q}\cdot \Gamma(\alpha M \tau q + 2).
\]
Note that $\alpha M \tau \le 1$ and $\tau(\alpha M + \frac{1}{2}) = 1$, thus
\begin{eqnarray*}
	q^{\frac{\tau q}{2}}\frac{\Gamma(\alpha M \tau q + 2)}{\Gamma(q+1)} &\le& (q+2)^{\frac{\tau q}{2}}\cdot \frac{(\alpha M \tau q + 2)^{\alpha M \tau q + \frac{3}{2}} e^{-(\alpha M \tau q+2)} e^{1/12}}{ (q+1)^{q+\frac{1}{2}} e^{-(q+1)}}\\
	&=& e^{1/12}(q+2)\cdot \Big(\frac{q+2}{q+1}\Big)^{q+\frac{1}{2}} e^{-1} e^{q(1-\alpha M \tau)}\\
	&\le& e^{1/12} (q+2) e^{q}.
\end{eqnarray*}
Define $\lambda := (4e)^{-1} Z_n^{-\tau}$. Since $\tau \le 2$, it holds that $\tau^{\tau/2} \le 2$. Thus
\[
	\sum_{q \ge 2/\tau}\frac{\lambda^q \|F_{n}(\phi,g)\|_{\tau q}^{\tau q}}{q!} \le e^{1/12}\cdot \sum_{q \ge 2/\tau}(q+2)(\lambda \cdot 2e Z_n^{\tau})^q \le e^{1/12}\sum_{q \ge 2/\tau}\frac{q+2}{2^q} \le 4e^{1/12}.
\]
In the case $\tau q < 2$, we have
\begin{eqnarray*}
	\frac{\lambda^q \|F_{n}(\phi,g)\|_{\tau q}^{\tau q}}{q!} &\le& \frac{\lambda^q \||E_{n,b}(g,u)|_2\|_{2}^{\tau q}}{q!} \le  \frac{\lambda^q}{q!}Z_n^{\tau q}\cdot \Gamma(2\alpha M + 2)^{\frac{\tau q}{2}}\\
	&\le& \frac{(4e)^{-q}}{q!}\cdot \Gamma(2\alpha M+2),
\end{eqnarray*}
thus $\sum_{q < 2/\tau}\frac{\lambda^q \|F_{n}(\phi,g)\|_{\tau q}^{\tau q}}{q!} \le \exp((4e)^{-1})\Gamma(2\alpha M + 2)$. So the result is obtained with $c_2 := 4e^{1/2} + \exp((4e)^{-1})\Gamma(2\alpha M + 2)$ and $c_1 = \tilde c \sum_{k=0}^{\infty}\xi(k) = \tilde c |\chi|_1 |\delta(\cdot)|_1$.
\end{proof}

\begin{lemma}[Bernstein inequality]\label{lemma_bernstein}
	Assume that $g\in \sH(M,\chi,C)$ and that Assumption \ref{assumption_dependence} holds. Let $\phi:[0,1] \to \R$ be a measurable function.
	Define
	\[
		W_n := \sum_{t=1}^{n}\phi(t/n)\{g(\tilde Y_t(t/n)) - \IE g(\tilde Y_t(t/n))\}.
	\]
	Assume that $s_n := \#\{t\in \{1,...,n\}: \phi(t/n) \not= 0\}$ fulfills
	\begin{equation}
		\Var(W_n) \ge \text{const.}(M,\chi,C,D,\rho,\phi)\cdot s_n.\label{lemma_bernstein_condition1}
	\end{equation}
	Then there exist some constants $c_4,c_5 > 0$ only dependent on $M,\chi,C,D,\rho,|\phi|_{\infty}$ such that:
	\[
		\IP\Big(\big|\sum_{t=1}^{n}\phi(t/n) \{g(\tilde Y_t(t/n)) - \IE g(\tilde Y_t(t/n))\}\big| > \gamma\Big) \le 2\exp\Big(-\frac{\gamma^2}{16 \Var(W_n) + c_4 a_n^{1/3} \gamma^{5/3}}\Big) + c_5 \frac{n^{-1}}{\gamma}.
	\]
		%\IP\Big(\big|\sum_{t=1}^{n}\phi(t/n) \{g(\tilde Y_t(t/n)) - \IE g(\tilde Y_t(t/n))\}\big| > \gamma\Big) \le 2\exp\Big(-\frac{\gamma^2}{16(nh)^2 \sigma_{n,h}^2 + c_4 a_n^{1/3} \gamma^{5/3}}\Big) + c_5 \frac{n^{-1}}{\gamma}.
	with $a_n := \tilde c_1(8\log(n))^{1/\tau_2}$ ($\tilde c_1, \tau_2$ from Lemma \ref{lemma_exponential_moments}).
	%\begin{eqnarray*}
		%\sigma_{n,h}^2 &:=& \frac{1}{nh}\int_0^{1}K(x)^2 \dif x \cdot \sum_{k\in\Z}\Cov(g(\tilde Y_0(u)), g(\tilde Y_k(u))),\\
		%\sigma_{n,h,h'}^2 &:=& \frac{1}{n}\int_0^{1}\{K_h(x) - K_{h'}(x)\}^2 \dif x \cdot \sum_{k\in\Z}\Cov(g(\tilde Y_0(u)), g(\tilde Y_k(u))).
	%\end{eqnarray*}
\end{lemma}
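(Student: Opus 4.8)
The plan is a truncation argument. Write $U_t := g(\tilde Y_t(t/n))$ and recall $a_n := \tilde c_1(8\log n)^{1/\tau_2}$ from Lemma~\ref{lemma_exponential_moments}, which is essentially the level at which $\IP(|U_t| > a_n) \le \tilde c_2 n^{-4}$. Set
\[
	U_t^{(1)} := U_t\,\Ii_{\{|U_t|\le a_n\}} - \IE\big[U_t\,\Ii_{\{|U_t|\le a_n\}}\big], \qquad U_t^{(2)} := U_t\,\Ii_{\{|U_t|> a_n\}} - \IE\big[U_t\,\Ii_{\{|U_t|> a_n\}}\big],
\]
so that $W_n = W_n^{(1)} + W_n^{(2)}$ with $W_n^{(i)} := \sum_{t=1}^n\phi(t/n)\,U_t^{(i)}$, both centred, $|U_t^{(1)}| \le 2a_n$, and
\[
	\IP\big(|W_n|>\gamma\big) \le \IP\big(|W_n^{(1)}|>\gamma/2\big) + \IP\big(|W_n^{(2)}|>\gamma/2\big).
\]
One may assume $s_n \ge 1$, since otherwise $W_n\equiv 0$.

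For the remainder $W_n^{(2)}$ I would use Markov's inequality. Lemma~\ref{lemma_exponential_moments}(ii) gives $\IP(|U_t|>a_n)\le\tilde c_2 n^{-4}$ uniformly in $t$, and the same exponential moment bound controls $\IE[|U_t|\,\Ii_{\{|U_t|>a_n\}}]$ and $\IE[U_t^2\,\Ii_{\{|U_t|>a_n\}}]$ by $O(n^{-3})$; summing over the at most $s_n\le n$ nonzero summands and using $|\phi|_\infty<\infty$ yields $\IE|W_n^{(2)}| = O(n^{-2})$ and $\Var(W_n^{(2)}) = O(n^{-1})$. Hence $\IP(|W_n^{(2)}|>\gamma/2) \le (2/\gamma)\,\IE|W_n^{(2)}| \le c_5\,n^{-1}/\gamma$, and since $W_n^{(1)} = W_n - W_n^{(2)}$ while condition~\eqref{lemma_bernstein_condition1} forces $\Var(W_n) \ge \text{const}\cdot s_n \ge \text{const}$, we get $\Var(W_n^{(1)}) \le 2\Var(W_n)$ for $n$ large.

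It remains to treat the bounded, centred array $(U_t^{(1)})_{t=1,\dots,n}$. Each $U_t^{(1)}$ is a measurable functional of $\sF_t$ and, by~\eqref{lemma_emp_eq1} together with Assumption~\ref{assumption_dependence}(ii), has a functional dependence measure dominated by a summable sequence (truncation only decreases it). After the standard passage from such bounds to covariance bounds for products of the coordinates, the Bernstein inequality of Doukhan and Neumann~\cite{doukhanneumann2007} applies with boundedness level $2|\phi|_\infty a_n$ and produces a bound of the form
\[
	\IP\big(|W_n^{(1)}|>\gamma/2\big) \le 2\exp\Big(-\frac{\gamma^2}{c_0\big(\Var(W_n^{(1)}) + s_n\big) + c_0'\,a_n^{1/3}\gamma^{5/3}}\Big),
\]
in which the $\gamma$-free part of the denominator is the long-run-variance correction (of order $s_n$) and the boundedness level enters only through $a_n^{1/3}$. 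By~\eqref{lemma_bernstein_condition1} both $s_n$ and $\Var(W_n^{(1)})$ are $\le\text{const}\cdot\Var(W_n)$, so, tracking the numerical constants (the factor from replacing $\gamma$ by $\gamma/2$, the leading factor in~\cite{doukhanneumann2007}, and $\Var(W_n^{(1)})\le 2\Var(W_n)$), the right-hand side is at most $2\exp\big(-\gamma^2/(16\Var(W_n)+c_4 a_n^{1/3}\gamma^{5/3})\big)$. Combining with the bound for $W_n^{(2)}$ gives the assertion, with $c_4,c_5$ depending only on $M,\chi,C,D,\rho,|\phi|_\infty$.

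The step I expect to be the main obstacle is the last one: verifying that the functional-dependence-measure estimate~\eqref{lemma_emp_eq1}, together with the \emph{geometric} decay postulated in Assumption~\ref{assumption_dependence}(ii), really meets the precise hypotheses of the Doukhan--Neumann theorem (their covariance bounds for products of up to $u+v$ coordinates, with the correct combinatorial weights), and that the resulting tail genuinely has the non-standard form with exponents $1/3$ and $5/3$ while retaining $\Var(W_n)$ as the leading term. The truncation estimates in the second step are elementary, but the powers of $n$ must be chosen with some slack so that the $s_n\le n$ summation is absorbed into $O(n^{-1})$.
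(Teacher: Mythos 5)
Your overall route is the same as the paper's: truncate $g(\tilde Y_t(t/n))$ at the level $a_n$ suggested by Lemma \ref{lemma_exponential_moments}, dispose of the unbounded remainder by Markov's inequality to produce the $c_5\, n^{-1}/\gamma$ term, apply the Doukhan--Neumann Bernstein inequality to the bounded, centred part, and use condition \reff{lemma_bernstein_condition1} to absorb the quantity $B_n$ of that theorem into $c_4 a_n^{1/3}$ while trading $\Var(W_n^{\circ})$ for $\Var(W_n)$ at the cost of the factor in front. The remainder estimates in your second step are fine (indeed slightly stronger than what the paper needs).

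However, there is a genuine gap exactly at the step you flag as the main obstacle, and it starts earlier than you think: your choice of \emph{hard} truncation $U_t^{(1)} = U_t\Ii_{\{|U_t|\le a_n\}} - \IE[\cdot]$ together with the parenthetical claim that ``truncation only decreases'' the functional dependence measure is false. The map $x\mapsto x\Ii_{\{|x|\le a_n\}}$ is not Lipschitz (it jumps at $\pm a_n$), so $\|U_t^{(1)} - (U_t^{(1)})^{*}\|_2$ cannot be bounded by $\|U_t - U_t^{*}\|_2$; if $U_t$ and $U_t^{*}$ sit on opposite sides of the threshold, the truncated difference is of order $a_n$ even when the original difference is tiny. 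Consequently the dependence-measure bound \reff{lemma_emp_eq1} does not transfer to your truncated array, and the covariance conditions of Doukhan--Neumann for products $\Cov(Z_{s_1}^{\circ}\cdots Z_{s_u}^{\circ}, Z_{t_1}^{\circ}\cdots Z_{t_v}^{\circ})$ cannot be verified along the lines you sketch. The paper avoids this by truncating with the $1$-Lipschitz winsorizing map $\Psi$ (clipping at $a_n$), and its Lipschitz continuity is used explicitly, via the projection operators $P_{t-k}$, to bound $\|P_{s_u-k}(Z_{t_1}^{\circ}\cdots Z_{t_v}^{\circ})\|_2$ by $(2|\phi|_\infty a_n)^{u-2}$ times a geometrically decaying term, which is what yields $B_n \le \mathrm{const}\cdot a_n$ under \reff{lemma_bernstein_condition1} and hence the exponents $1/3$ and $5/3$ with $\Var(W_n)$ as the leading term. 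So the fix is standard (replace indicator truncation by clipping and then actually carry out the product-covariance verification), but as written your argument would fail at its decisive step, which you left unverified.
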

\begin{proof} (i) \emph{Step 1: Truncation.} Define $W_n^{\circ} := \sum_{t=1}^{n}Z_t^{\circ}$, where $Z_t^{\circ} = Z_t^{\circ}(t/n)$, $\Psi(x) = x\Ii_{\{|x| \le a_n\}} + a_n \Ii_{\{|x| > a_n\}}$,
\[
	Z_t^{\circ}(u) := \phi(u)\cdot [\Psi(g(\tilde Y_t(u))) - \IE \Psi(g(\tilde Y_t(u)))].
\]
We have
\begin{eqnarray}
	\|W_n - W_n^{\circ}\|_q &\le& 2\sum_{t=1}^{n}|\phi(t/n)|\cdot \|g(\tilde Y_t(t/n))\Ii_{\{|g(\tilde Y_t(t/n))| > a_n\}}\|_q\nonumber\\
	&\le& 2|\phi|_{\infty}n\cdot \sup_{u\in[0,1]}\|g(\tilde Y_0(u))\|_{2q}\cdot \sup_{u\in[0,1]}\IP(|g(\tilde Y_0(u))|>a_n)^{1/2}\label{proof_lemma_bernstein_eq0}
\end{eqnarray}
By Lemma \ref{lemma_exponential_moments}, we have
\[
	\IP(|g(\tilde Y_0(u))|>a_n) \le \tilde c_2\cdot \exp\Big(-\frac{1}{2}\Big(\frac{a_n}{\tilde c_1}\Big)^{\tau_2}\Big) \le \tilde c_2 n^{-4}.
\]
Inserting this into \reff{proof_lemma_bernstein_eq0} and using Lemma \ref{lemma_exponential_moments} to bound $\sup_{u\in[0,1]}\|g(\tilde Y_0(u))\|_{2q} \le \tilde c_1 N_{\alpha}(2qM)^M$, we obtain.
\begin{equation}
	\|W_n - W_n^{\circ}\|_q \le 2|\phi|_{\infty}\tilde c_1 N_{\alpha}(2qM)^M\sqrt{\tilde c_2}\cdot n^{-1} =: \tilde c'(q)\cdot n^{-1}\label{proof_lemma_bernstein_eq03}
\end{equation}
With \reff{proof_lemma_bernstein_eq03} and Markov's inequality, we obtain
\begin{eqnarray}
	\IP(|W_n| > \gamma) &\le& \IP(|W_n - W_n^{\circ}| > \frac{\gamma}{2}) + \IP(|W_n^{\circ}| > \frac{\gamma}{2})\nonumber\\
	&\le& \tilde c'(1) \cdot \frac{n^{-1}}{\gamma} + \IP(|W_n^{\circ}| > \frac{\gamma}{2}).\label{proof_lemma_bernstein_eq05}
\end{eqnarray}
\emph{Step 2: Applying a Bernstein inequality from \cite{doukhanneumann2007}}. Note that by Assumption \ref{assumption_dependence} and \reff{lemma_emp_eq1}, we have with some constant $\tilde c$ only dependent on $M,\chi,C,D$:
\[
	\delta_q^{g(\tilde Y(u))}(t) \le \tilde c\cdot \Big(\sum_{j=1}^{t}\chi_j\cdot \rho^{t-j+1}\Big)N_{\alpha}(qM)^M \le \tilde c\cdot t\rho^{t+1} N_{\alpha}(qM)^M,
\]
i.e.
\begin{equation}
	\delta_q^{g(\tilde Y(u))}(t) \le \tilde c'\cdot \tilde \rho^{t}\cdot N_{\alpha}(qM)^M\label{proof_lemma_bernstein_eq1}
\end{equation}
with $\tilde c' > 0$, $\tilde \rho \in (0,1)$ only depending on $M,\chi,C,D,\rho$.

For $s_1,...,s_u,t_1,...,t_v\in\IN$, it holds that
\begin{eqnarray*}
	&& |\Cov(Z_{s_1}^{\circ}\dots Z_{s_u}^{\circ},Z_{t_1}^{\circ}\dots Z_{t_v}^{\circ})|\\
	&\le& \sum_{k=0}^{\infty}|P_{s_u-k}(Z_{s_1}^{\circ}\dots Z_{s_u}^{\circ})\cdot P_{s_u-k}(Z_{t_1}^{\circ}\dots Z_{t_v}^{\circ})|\\
	&\le& \sum_{k=0}^{\infty}\|P_{s_u-k}(Z_{s_1}^{\circ}\dots Z_{s_u}^{\circ})\|_2\cdot\|P_{s_u-k}(Z_{t_1}^{\circ}\dots Z_{t_v}^{\circ})\|_2.
\end{eqnarray*}
We have
\[
	\|P_{s_u-k}(Z_{s_1}^{\circ}\dots Z_{s_u}^{\circ})\|_2 \le 2\|Z_{s_1}^{\circ}\dots Z_{s_u}^{\circ}\|_2 \le 2(2|\phi|_{\infty}a_n)^u.
\]
By Lemma \ref{lemma_exponential_moments}, $L := |\phi|_{\infty}\sup_{u\in[0,1]}\|g(\tilde Y_0(u))\|_2 \le |\phi|_{\infty}\cdot \tilde c_1 N_{\alpha}(2M)^M$. We obtain with \reff{proof_lemma_bernstein_eq1} and Lipschitz continuity of $\Psi$ that
\begin{eqnarray*}
	\|P_{s_u-k}(Z_{t_1}^{\circ}\dots Z_{t_v}^{\circ})\|_2 &\le& \|Z_{t_1}^{\circ}\dots Z_{t_v}^{\circ} - (Z_{t_1}^{\circ})^{*}\dots (Z_{t_v}^{\circ})^{*}\|_2\\
	&\le& L|\phi|_{\infty}(2|\phi|_{\infty}a_n)^{u-2}\sum_{k=1}^{v}\delta_2^{g(\tilde Y(u))}(t_1 - s_u + k)\\
	&\le& \tilde c'\cdot L|\phi|_{\infty} (2|\phi|_{\infty}a_n)^{u-2}\cdot v \cdot \tilde \rho^{t_1-s_u}.
\end{eqnarray*}
Furthermore,
\[
	\sum_{s=0}^{\infty}(s+1)^k \tilde \rho^{k} \le (\frac{1}{1-\tilde \rho})^{k+1},\quad\quad \IE|Z_{t}|^k \le (2|K|_{\infty}a_n)^k.
\]
Using Theorem 1 in \cite{doukhanneumann2007} (with $\mu=0$, $\nu=1$ therein) yields 
\[
	\IP(W_n^{\circ} > \frac{\gamma}{2}) \le \IP(\sum_{t=1}^{n}Z_{t}^{\circ} \ge \frac{\gamma}{2}) \le \exp\Big(-\frac{(\gamma/2)^2/2}{A_n + B_n^{1/3}(\gamma/2)^{5/3}}\Big),
\]
where $A_n :=\Var(W_n^{\circ})$ and with $s_n := \#\{t\in \{1,...,n\}:\phi(t/n) \not=0\}$,
\[
	B_n = 2(\sqrt{\tilde c L |\phi|_{\infty}} \vee (2|\phi|_{\infty}a_n))\cdot \frac{1}{1-\tilde \rho}\cdot \Big(\Big(\frac{2^5 s_n \frac{\tilde c' L |\phi|_{\infty}}{1-\tilde \rho}}{A_n}\Big) \vee 1\Big)
\]
(we use $s_n$ instead of $n$ which is possible due to a change in the upper bound in their equation (43)).
Here, we have with \reff{proof_lemma_bernstein_eq03} and $h \le 1$ that
\[
	|\Var(W_n^{\circ}) - \Var(W_n)| \le \|W_n^{\circ} - W_n\|_2 \le \tilde c'(2)n^{-1}.
\]
By assumption, we conclude that $B_n \le \text{const.}(M,\chi,C,D,\rho)\cdot a_n$ for $n$ large enough.
\end{proof}

As a direct corollary of Lemma \ref{lemma_bernstein}, we obtain Theorem \ref{lemma_bernstein_kerne} by using the following arguments:

\begin{proof}[Proof of Theorem \ref{lemma_bernstein_kerne}]
	We apply Lemma \ref{lemma_bernstein} with $\phi(v) := K((v-u)/h)$. Here, $s_n = \#\{t\in \{1,...,n\}: \phi(t/n) \not= 0\} \le n\cdot h$. By Lemma \ref{lemma_variance_calc}, we have $W_n = (nh)\cdot \tilde G_h(u)_j$ and 
	\[
		|\Var(\tilde G_h(u)_j) - v_j^2(h,u)| \le c\cdot ((nh)^{-2} + n^{-1}\log(n)^2),
	\]
	showing that $\frac{1}{2}(nh)^2 v_j^2(h,u) \le \Var(W_n) \le 2(nh)^2 v_j^2(h,u)$ is fulfilled for $n$ large enough since $h\in H_n$. Therefore, 
	\reff{lemma_bernstein_condition1} is fulfilled for $n$ large enough.
	%\begin{eqnarray*}
	%	\|W_n\|_2 &\le& \sum_{k=0}^{\infty}\Big\| \sum_{t=1}^{n}\phi(t/n) P_{t-k}g(\tilde Y_t(t/n))\Big\|_2 = \sum_{k=0}^{\infty}\Big(\sum_{t=1}^{n}\phi(t/n)^2 \|P_{t-k} g(\tilde Y_t(t/n))\|_2^2\Big)^{1/2}\\
	%	&\le& \sum_{k=0}^{\infty}\sup_{u\in[0,1]}\delta_2^{g(\tilde Y(u))}(k) \cdot \Big(\sum_{t=1}^{n}\phi(t/n)^2\Big)^{1/2} \le \frac{\tilde c N_{\alpha}(2M)^M}{1-\tilde \rho}\cdot \Big(\sum_{t=1}^{n}\phi(t/n)^2\Big)^{1/2}.
%\end{eqnarray*}	
\end{proof}

Furthermore, we obtain a Bernstein inequality for the difference $\tilde G_h(u) - \tilde G_{h'}(u)$ for two different bandwidths $h' \le h$:

\begin{lemma}[Bernstein inequality for $\tilde G_h(u)-\tilde G_{h'}(u)$]\label{lemma_bernstein_kerne2}
	Fix $u\in [0,1]$ and $a \in (0,1)$. Assume that $g\in \sH(M,\chi,C)$ and Assumption \ref{assumption_dependence} holds. Then there exist some constants $c_4,c_5 > 0$ only dependent on $M,\chi,C,D,\rho,|K|_{\infty},a$ such that for $h' \le a\cdot h$,
	\begin{eqnarray*}
		&&\IP\Big((nh')\big|(\tilde G_h(u)_j - \tilde G_{h'}(u)_j) - (\IE \tilde G_h(u)_j - \IE \tilde G_{h'}(u)_j)\big| > \gamma\Big)\\
		&\le& 2\exp\Big(-\frac{\gamma^2}{32 (nh')^2v_j^2(h,h',u) + c_4 a_n^{1/3} \gamma^{5/3}}\Big) + c_5 \frac{n^{-1}}{\gamma}.
	\end{eqnarray*}
		%\IP\Big(\big|\sum_{t=1}^{n}\phi(t/n) \{g(\tilde Y_t(t/n)) - \IE g(\tilde Y_t(t/n))\}\big| > \gamma\Big) \le 2\exp\Big(-\frac{\gamma^2}{16(nh)^2 \sigma_{n,h}^2 + c_4 a_n^{1/3} \gamma^{5/3}}\Big) + c_5 \frac{n^{-1}}{\gamma}.
	with $a_n := \tilde c_1(8\log(n))^{1/\tau_2}$ ($\tilde c_1, \tau_2$ from Lemma \ref{lemma_exponential_moments}).
	%\begin{eqnarray*}
		%\sigma_{n,h}^2 &:=& \frac{1}{nh}\int_0^{1}K(x)^2 \dif x \cdot \sum_{k\in\Z}\Cov(g(\tilde Y_0(u)), g(\tilde Y_k(u))),\\
		%\sigma_{n,h,h'}^2 &:=& \frac{1}{n}\int_0^{1}\{K_h(x) - K_{h'}(x)\}^2 \dif x \cdot \sum_{k\in\Z}\Cov(g(\tilde Y_0(u)), g(\tilde Y_k(u))).
	%\end{eqnarray*}
\end{lemma}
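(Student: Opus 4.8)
\textbf{Proof strategy for Lemma~\ref{lemma_bernstein_kerne2}.}
The plan is to split the difference linearly into two \emph{rescaled} single-bandwidth kernel averages and to reduce the estimate to two invocations of the already-established Bernstein inequality of Theorem~\ref{lemma_bernstein_kerne}. Fix $u\in[0,1]$, $j\in\{1,\dots,d\}$ and $h'\le a\cdot h$, both in $H_n$. Since $K_h(v-u)=\tfrac1h K((v-u)/h)$, one has the exact identity
\[
    (nh')\big[\tilde G_h(u)_j-\tilde G_{h'}(u)_j-\IE\tilde G_h(u)_j+\IE\tilde G_{h'}(u)_j\big]=W_n^{(1)}-W_n^{(2)},
\]
with $W_n^{(1)}:=\tfrac{h'}{h}\,(nh)\big[\tilde G_h(u)_j-\IE\tilde G_h(u)_j\big]$ and $W_n^{(2)}:=(nh')\big[\tilde G_{h'}(u)_j-\IE\tilde G_{h'}(u)_j\big]$, so a union bound reduces the claim to bounding $\IP(|W_n^{(1)}|>\gamma/2)$ and $\IP(|W_n^{(2)}|>\gamma/2)$. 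Note that one cannot apply Lemma~\ref{lemma_bernstein} directly to $W_n^{(1)}$ viewed as a kernel average with weight $v\mapsto\tfrac{h'}{h}K((v-u)/h)$, since condition~\reff{lemma_bernstein_condition1} would ask for $\Var(W_n^{(1)})\gtrsim s_n\asymp nh$ while $\Var(W_n^{(1)})\asymp n(h')^2/h$, which fails when $h'\ll h$; rewriting $W_n^{(1)}$ via the bandwidth-$h$ average is precisely what sidesteps this.

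The bound on $W_n^{(2)}$ is Theorem~\ref{lemma_bernstein_kerne} applied with bandwidth $h'$ and level $\gamma/2$. For $W_n^{(1)}$ I would write $\IP(|W_n^{(1)}|>\gamma/2)=\IP\big((nh)|\tilde G_h(u)_j-\IE\tilde G_h(u)_j|>\tfrac{h}{2h'}\gamma\big)$ and apply Theorem~\ref{lemma_bernstein_kerne} with bandwidth $h$ and level $\tfrac{h}{2h'}\gamma$; multiplying numerator and denominator of the exponent by $(2h'/h)^2\le 2^2$ turns the Gaussian-variance term into (a numerical multiple of) $(h'/h)^2(nh)^2 v_j^2(h,u)$ and the heavy-tail term into $c_4\,2^{1/3}(h'/h)^{1/3}a_n^{1/3}\gamma^{5/3}\le c_4\,2^{1/3}a_n^{1/3}\gamma^{5/3}$ — here $h'/h\le1$ is what keeps the $a_n$-term from deteriorating — while the polynomial term becomes $\le 2c_5\,n^{-1}/\gamma$. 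Thus, up to harmless numerical factors, each of the two probabilities has the target shape, with ``variance'' $(h'/h)^2(nh)^2 v_j^2(h,u)$ and $(nh')^2 v_j^2(h',u)$ respectively.

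It remains to dominate both $(h'/h)^2(nh)^2 v_j^2(h,u)=n(h')^2\sigma_K^2\Sigma(u)_{jj}/h$ and $(nh')^2 v_j^2(h',u)=nh'\sigma_K^2\Sigma(u)_{jj}$ by a constant times $(nh')^2 v_j^2(h,h',u)=n(h')^2\Sigma(u)_{jj}\int\{K_h(x)-K_{h'}(x)\}^2\dif x$, and this is the crux of the argument and the only place the separation $h'\le ah$ with $a<1$ is used. By the reverse triangle inequality in $L^2$, $\|K_h-K_{h'}\|_2\ge\|K_{h'}\|_2-\|K_h\|_2=\sigma_K(h'^{-1/2}-h^{-1/2})$, so $h'\le ah$ gives both $\int\{K_h-K_{h'}\}^2\dif x\ge\sigma_K^2 h^{-1}(a^{-1/2}-1)^2$ and $\ge\sigma_K^2 h'^{-1}(1-\sqrt a)^2$, yielding the two required dominations with constant $\big(\min\{a^{-1/2}-1,\,1-\sqrt a\}\big)^{-2}$. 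Inserting these, relabelling $\gamma/2$ as $\gamma$, and collecting the two exponentials into one factor $2\exp(\cdot)$ and the two $n^{-1}/\gamma$ terms into one, produces the asserted inequality with $c_4,c_5$ now additionally depending on $a$ (the numerical constant in the denominator being whatever falls out of this bookkeeping). The main obstacle is exactly this last step: one must ensure that the variance of the difference does not degenerate relative to those of the two individual averages — which is false as $a\to1$, hence the strict separation — and route the $W_n^{(1)}$-part through the rescaled bandwidth-$h$ average rather than through Lemma~\ref{lemma_bernstein}. Routine points suppressed above (that $\Var(W_n^{(\ell)})$ coincides with the closed-form $v_j^2$-expressions up to an $O((nh')^{-2}+n^{-1}\log(n)^2)$ error, negligible since $h,h'\in H_n$ with $\underline{h}\asymp\log(n)^{5+2\alpha M}$) are in fact already built into Theorem~\ref{lemma_bernstein_kerne}, which we only invoke as a black box.
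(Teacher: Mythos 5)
Your route is genuinely different from the paper's, and the core computations in it are sound. The paper proves the lemma by a single application of the general Bernstein inequality (Lemma \ref{lemma_bernstein}) with the weight $\phi(v)=h'\{K_h(v-u)-K_{h'}(v-u)\}$, i.e.\ it treats $(nh')(\tilde G_h(u)_j-\tilde G_{h'}(u)_j)$ as one weighted sum; Lemma \ref{lemma_variance_calc}(ii) is then used both to identify $\Var(W_n)$ with $(nh')^2v_j^2(h,h',u)$ up to an $O((nh')^{-2}+n^{-1}\log(n)^2)$ error and to verify the lower-bound condition \reff{lemma_bernstein_condition1}, which is exactly where the separation $h'\le ah$ enters through the factor $(a-1)^2$ and $f_{min}>0$. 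You instead split into two single-bandwidth deviations, apply Theorem \ref{lemma_bernstein_kerne} twice, and transfer the variance to $v_j^2(h,h',u)$ via the reverse triangle inequality $\|K_h-K_{h'}\|_{L^2}\ge\sigma_K(h'^{-1/2}-h^{-1/2})$, which plays precisely the role of the paper's $(a-1)^2$-bound. What your route buys: you only ever invoke the Bernstein inequality for genuine kernel averages, where variance and support size are both of order $nh$, so you never have to check \reff{lemma_bernstein_condition1} for the difference kernel, where the variance is of order $nh'$ while the support is of order $nh$ (a delicate point in the paper's one-shot argument when $h'\ll h$). What it costs is the constants, and that is where your write-up falls short of the statement.

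Concretely: the union bound at level $\gamma/2$ already turns the $32$ from Theorem \ref{lemma_bernstein_kerne} into $4\cdot 32=128$ before any variance comparison; the reverse-triangle domination then costs a further factor $(1-\sqrt a)^{-2}$ for the $h'$-piece (resp.\ $(a^{-1/2}-1)^{-2}$ for the rescaled $h$-piece); and the two exponentials combine to $4\exp(\cdot)$, not $2\exp(\cdot)$. So what you actually prove is the inequality with $32$ replaced by $128(1-\sqrt a)^{-2}$ and the leading $2$ replaced by $4$ --- an $a$-dependent constant that blows up as $a\to1$ and, for the grid ratio $a=0.9$ used in the paper, is several orders of magnitude larger than $32$. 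This is not a cosmetic issue you can wave into ``$c_4,c_5$ now depend on $a$'': the $a$-dependence sits on the coefficient of $(nh')^2v_j^2(h,h',u)$, and that explicit $32$ is used verbatim downstream in the proof of Proposition \ref{upperbound} (the bound on $\IP(|\tilde N(h,h',u)|_2>D^{\#}v(h,h',u)\lambda(h'))$ with $D^{\#}=8$, tied to the threshold $C^{\#}\ge 64$ in Theorem \ref{upperbound_thm2}). With your constant, $C^{\#}$ would have to grow like $(1-\sqrt a)^{-1}$ and the downstream statements would need reworking. So either accept that you are proving a weaker lemma and propagate the $a$-dependent constant through Proposition \ref{upperbound}, or adopt the paper's single-application argument, which works with the exact variance of the difference and therefore retains the stated constant.
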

\begin{proof}[Proof of Theorem \ref{lemma_bernstein_kerne}]
	We apply Lemma \ref{lemma_bernstein} with $\phi(v) := h'(K_h(v-u) - K_{h'}(v-u))$. Here $\#\{t\in \{1,...,n\}: \phi(t/n) \not= 0\} \le 2n h$. By Lemma \ref{lemma_variance_calc}, we have $W_n = (nh')\cdot \tilde G_h(u)_j$ and 
	\[
		|\Var(\tilde G_h(u)_j - \tilde G_{h'}(u)_j) - v_j^2(h,h',u)| \le c\cdot ((nh')^{-2} + n^{-1}\log(n)^2),
	\]
	showing that
	\[
		\frac{1}{2}(nh')^2 v_j^2(h,h',u) \le \Var(W_n) \le 2(nh')^2 v_j^2(h,h',u)
	\]
	is fulfilled for $n$ large enough since $h\in H_n$. Therefore, 
	\reff{lemma_bernstein_condition1} is fulfilled for $n$ large enough.
	%\begin{eqnarray*}
	%	\|W_n\|_2 &\le& \sum_{k=0}^{\infty}\Big\| \sum_{t=1}^{n}\phi(t/n) P_{t-k}g(\tilde Y_t(t/n))\Big\|_2 = \sum_{k=0}^{\infty}\Big(\sum_{t=1}^{n}\phi(t/n)^2 \|P_{t-k} g(\tilde Y_t(t/n))\|_2^2\Big)^{1/2}\\
	%	&\le& \sum_{k=0}^{\infty}\sup_{u\in[0,1]}\delta_2^{g(\tilde Y(u))}(k) \cdot \Big(\sum_{t=1}^{n}\phi(t/n)^2\Big)^{1/2} \le \frac{\tilde c N_{\alpha}(2M)^M}{1-\tilde \rho}\cdot \Big(\sum_{t=1}^{n}\phi(t/n)^2\Big)^{1/2}.
%\end{eqnarray*}	
\end{proof}

\begin{lemma}[Stationary approximation]\label{lemma_statapprox}
	Let $q \ge 1$. Then with some constant $c' = c'(u) > 0$,
	\begin{enumerate}
	\item[(i)] $\|\hat G_h(u) - \tilde G_h(u)\|_{q} \le c' n^{-1}$.
	\item[(ii)] uniformly in $u\in[0,1]$, $\|\hat G_h(u) - \tilde G_h(u)\|_{q} \le c' (nh)^{-1}$.
	\end{enumerate}
\end{lemma}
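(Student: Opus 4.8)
The plan is to write $\hat G_h(u)-\tilde G_h(u)$ as a kernel-weighted average of the pointwise discrepancies $g(Y_{t,n})-g(\tilde Y_t(t/n))$, to bound each such discrepancy in $L^q$ using the polynomial Lipschitz property $g\in\sH(M,\chi,C)$ together with Assumption \ref{def_localstat}, and then to sum against $K_h$. Concretely, since
\[
\hat G_h(u)-\tilde G_h(u)=\frac1n\sum_{t=1}^n K_h(t/n-u)\big(g(Y_{t,n})-g(\tilde Y_t(t/n))\big),
\]
Minkowski's inequality gives $\|\hat G_h(u)-\tilde G_h(u)\|_q\le\frac1n\sum_{t=1}^n K_h(t/n-u)\,\|g(Y_{t,n})-g(\tilde Y_t(t/n))\|_q$, so everything reduces to a good pointwise bound.

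To bound the pointwise term, for $M\ge2$ I would use the definition of $\sH(M,\chi,C)$ and Hölder's inequality (splitting $\tfrac1q=\tfrac1{qM}+\tfrac{M-1}{qM}$):
\[
\|g(Y_{t,n})-g(\tilde Y_t(t/n))\|_q\le C\,\big\|\,|Y_{t,n}-\tilde Y_t(t/n)|_\chi\,\big\|_{qM}\cdot\big\|1+|Y_{t,n}|_\chi^{M-1}+|\tilde Y_t(t/n)|_\chi^{M-1}\big\|_{qM/(M-1)},
\]
while for $M=1$ the last factor is constant and the bound $\|g(Y_{t,n})-g(\tilde Y_t(t/n))\|_q\le 3C\,\big\|\,|Y_{t,n}-\tilde Y_t(t/n)|_\chi\,\big\|_q$ is immediate. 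In either case the second factor is $O(1)$, because $\big\|\,|Y_{t,n}|_\chi\,\big\|_{qM}\le|\chi|_1\sup_{t,n}\|X_{t,n}\|_{qM}\le|\chi|_1 D$ and likewise for $\tilde Y_t(t/n)$ by stationarity, with $|\chi|_1<\infty$ since $\chi_i=O(i^{-2-\varepsilon})$; here I use that the standing assumptions provide moments of order $qM$, which holds in every invocation of this lemma. For the discrepancy factor I match components ($Y_{t,n}$ has $i$-th entry $X_{t-i+1,n}$ for $i\le t$ and $0$ otherwise, $\tilde Y_t(t/n)$ has $i$-th entry $\tilde X_{t-i+1}(t/n)$) to get
\[
\big\|\,|Y_{t,n}-\tilde Y_t(t/n)|_\chi\,\big\|_{qM}\le\sum_{i=1}^t\chi_i\,\|X_{t-i+1,n}-\tilde X_{t-i+1}(t/n)\|_{qM}+\sum_{i>t}\chi_i\,\|\tilde X_{t-i+1}(t/n)\|_{qM}.
\]
For $i\le t$ (so the index $s=t-i+1\ge1$) I would insert $\tilde X_{t-i+1}((t-i+1)/n)$ and combine $\|X_{s,n}-\tilde X_s(s/n)\|_{qM}\le Dn^{-1}$ with the Lipschitz-in-$u$ bound $\|\tilde X_s((t-i+1)/n)-\tilde X_s(t/n)\|_{qM}\le D(i-1)/n$ of Assumption \ref{def_localstat}, obtaining $\le Di/n$; summing and using $\sum_i i\chi_i<\infty$ makes this sum $O(n^{-1})$. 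For $i>t$, stationarity gives $\|\tilde X_{t-i+1}(t/n)\|_{qM}\le D$, so the tail is $D\sum_{i>t}\chi_i=O(t^{-1-\varepsilon})$. Hence $\|g(Y_{t,n})-g(\tilde Y_t(t/n))\|_q\le c'(n^{-1}+t^{-1-\varepsilon})$.

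Finally I would assemble, using $\frac1n\sum_t K_h(t/n-u)=1+O((nh)^{-1})=O(1)$ together with $K_h\le h^{-1}|K|_\infty$ and $\#\{t:K_h(t/n-u)\ne0\}\le nh+1$. For (ii): the $n^{-1}$-part contributes $O(n^{-1})$ and $\frac1n\sum_t K_h(t/n-u)t^{-1-\varepsilon}\le\frac{|K|_\infty}{nh}\sum_{t\ge1}t^{-1-\varepsilon}=O((nh)^{-1})$, both uniformly in $u$, giving $\|\hat G_h(u)-\tilde G_h(u)\|_q=O((nh)^{-1})$. For (i) with $u\in(0,1)$ fixed: when $h\le u\wedge(1-u)$ the support of $K_h(\cdot-u)$ forces $t\ge n(u-h/2)\ge nu/2$, so $\frac1n\sum_t K_h(t/n-u)t^{-1-\varepsilon}=O((nu)^{-1-\varepsilon})=o(n^{-1})$; for larger $h$ the estimate of (ii) already yields $O((nh)^{-1})=O(n^{-1})$ with constant $\propto(u\wedge(1-u))^{-1}$; in all cases the bound is $c'(u)\,n^{-1}$.

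I expect the one genuinely delicate step to be the pointwise bound of the second paragraph, namely controlling the infinite trajectory $|Y_{t,n}-\tilde Y_t(t/n)|_\chi$: one must evaluate the stationary approximation $\tilde X_s$ at the shifted time $t/n$ rather than its natural time $s/n$, which introduces the extra factor $i/n$ and makes the summability $\chi_i=O(i^{-2-\varepsilon})$ (so that $\sum_i i\chi_i<\infty$) essential, and one must absorb the tail $i>t$, whose non-uniform contribution $t^{-1-\varepsilon}$ is precisely what costs the extra factor $h^{-1}$ in (ii) relative to (i). The remaining steps are routine Hölder/Minkowski bookkeeping with the moment bounds of Assumption \ref{def_localstat}.
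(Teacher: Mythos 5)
Your proof is correct and follows essentially the same route as the paper: H\"older together with the $\sH(M,\chi,C)$ property gives the per-$t$ bound $c(n^{-1}+\sum_{i\ge t}\chi_i)$, after which the kernel support argument (case $h\le u$ versus larger $h$) yields (i) and the crude bound $K_h\le h^{-1}|K|_\infty$ together with $\sum_t\sum_{i\ge t}\chi_i<\infty$ yields (ii). If anything, you are slightly more careful than the paper's displayed computation on one point, namely comparing $X_{s,n}$ with $\tilde X_s(t/n)$ rather than with $\tilde X_s(s/n)$ and absorbing the resulting Lipschitz-in-$u$ shift via $\sum_i i\chi_i<\infty$, which only tightens the bookkeeping and leads to the identical conclusion.
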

\begin{proof}[Proof of Lemma \ref{lemma_statapprox}] (i) By Hoelder's inequality,
\begin{eqnarray}
	&& \|g(Y_{t,n}) - g(\tilde Y_t(t/n))\|_q\nonumber\\
	&\le& C(1+2DN_{\alpha}(qM)^{M-1})\cdot \Big[\sum_{i=1}^{t-1}\chi_i \|X_{t-i,n} - \tilde X_{t-i}((t-i)/n)\|_{qM} + \sum_{i=t}^{\infty}\chi_i \|\tilde X_{t-i}(t/n)\|_{qM}\Big]\nonumber\\
	&\le& CD(1+2DN_{\alpha}(qM)^{M-1})\cdot \Big[n^{-1}\sum_{i\in\N}\chi_i + \sum_{i=t}^{\infty}\chi_i\Big]\label{lemma_statapprox_eq1}
\end{eqnarray}
Put $c'' = CD(1+2DN_{\alpha}(qM)^{M-1})$. Then
	\[
		\|\hat G_h(u) - \tilde G_h(u)\|_{q} \le |K|_{\infty}c''n^{-1}\sum_{i\in\N}\chi_i + \frac{c''}{nh}\sum_{t=1}^{n}K\Big(\frac{t/n-u}{h}\Big)\cdot \sum_{i=t}^{\infty}\chi_i.
	\]
	If $h \le u$, then summation is only done over $\frac{t}{n} \ge u-\frac{h}{2} \ge \frac{u}{2}$, thus $t \ge \frac{u}{2}\cdot n$. Since $\sum_{i=t}^{\infty}\chi_i \le c t^{-1} \le \frac{2c}{u}\cdot \frac{1}{n}$, we obtain the result.\\
	If $h \ge u$, then $|\frac{1}{nh}\sum_{t=1}^{n}K\big(\frac{t/n-u}{h}\big)\cdot \sum_{i=t}^{\infty}\chi_i| \le \frac{|K|_{\infty}}{nh}\sum_{t=1}^{\infty}\sum_{i=t}^{\infty}\chi_i$ and $nh \ge nu$, we obtain  the result.
	
	(ii) is immediate from \reff{lemma_statapprox_eq1} and
	\[
	    \|\hat G_h(u) - \tilde G_h(u)\|_{q} \le \frac{|K|_{\infty}}{nh}\sum_{t=1}^{n}\|g(Y_{t,n}) - g(\tilde Y_t(t/n))\|_q.
	\]
\end{proof}

\begin{lemma}[Calculation of Variance]\label{lemma_variance_calc} Assume that
\[
    \sigma_p^2(u) := \sum_{k\in\Z}\Cov(p(\tilde Y_0(u)), p(\tilde Y_k(u)))
\]
fulfills $\sigma_{p,min}^2 := \inf_{u\in[0,1]}\sigma^2_p(u) > 0$.
For $\tilde P_h(u) = \frac{1}{n}\sum_{t=1}^{n}K_h(t/n-u)\cdot p(\tilde Y_t(t/n))$ with $p \in \sH(M,\chi,C)$, it holds that (i)
\[
	\Var(\tilde P_h(u)) = \frac{1}{nh}\int K(x)^2 \dif x \cdot \sigma_p^2(u) + R_{n,h},
\]
and for $h' \le a\cdot h$ (with fixed $a\in(0,1)$), there exist constants $c',c_6 > 0$ such that (ii)
\begin{eqnarray*}
	\Var(\tilde P_h(u) - \tilde P_{h'}(u)) &\ge& c_6 (a-1)^2\sigma_{p,min}^2\cdot (nh')^{-1},\\
	\Var(\tilde P_h(u) - \tilde P_{h'}(u)) &=& \frac{1}{n}\int \{K_h(x) - K_{h'}(x)\}^2 \dif x \cdot \sigma_p^2(u) + R_{n,h,h'},
\end{eqnarray*}
where $|R_{n,h}| \le c'\cdot ((nh)^{-2} + n^{-1}\log(n)^2)$, $|R_{n,h,h'}| \le c'\cdot ((nh')^{-2} + n^{-1}\log(n)^2)$.
\end{lemma}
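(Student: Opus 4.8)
The plan is to expand $\Var(\tilde P_h(u))$ as a kernel-weighted double sum of covariances, replace the nonstationary covariances $\Cov(p(\tilde Y_s(s/n)),p(\tilde Y_t(t/n)))$ by the stationary ones $\gamma_p(u,k):=\Cov(p(\tilde Y_0(u)),p(\tilde Y_k(u)))$ attached to the point $u$, and then isolate the leading term by a change of summation index together with a Riemann-sum approximation of the kernel. I would begin from
\[
	\Var(\tilde P_h(u))=\frac1{n^2}\sum_{s,t=1}^{n}K_h(s/n-u)\,K_h(t/n-u)\,\Cov\big(p(\tilde Y_s(s/n)),p(\tilde Y_t(t/n))\big),
\]
where only indices with $|s/n-u|\vee|t/n-u|\le h/2$ contribute. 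On that range a Hoelder argument as in the proof of Lemma~\ref{lemma_statapprox}, using $p\in\sH(M,\chi,C)$, the continuity bound $\|\tilde X_0(v)-\tilde X_0(u)\|_{2M}\le D|v-u|$ of Assumption~\ref{def_localstat} and the moment bounds of Lemma~\ref{lemma_exponential_moments}, gives $\|p(\tilde Y_t(t/n))-p(\tilde Y_t(u))\|_2\le c'h$, so that $|\Cov(p(\tilde Y_s(s/n)),p(\tilde Y_t(t/n)))-\gamma_p(u,s-t)|\le c'h$ by Cauchy--Schwarz. Independently, the geometrically decaying dependence measure $\delta_2^{p(\tilde Y(u))}(k)\le c'\tilde\rho^{\,k}$ from \reff{proof_lemma_bernstein_eq1} bounds both $|\Cov(p(\tilde Y_s(s/n)),p(\tilde Y_t(t/n)))|$ and $|\gamma_p(u,s-t)|$ by $c'\tilde\rho^{\,|s-t|}$.

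Next I would split the double sum at $|s-t|=L:=\lceil c''\log n\rceil$: on the band $|s-t|\le L$ the estimate $c'h$, times the kernel weights ($\le|K|_\infty^2h^{-2}$), times the number of admissible pairs ($\le c'\,nh\,L$), times $n^{-2}$, contributes $O(L/n)$; outside the band the bound $c'\tilde\rho^{\,|s-t|}$, with $c''$ chosen so that $\tilde\rho^{\,L}\le n^{-3}$, is negligible. This reduces matters to the stationary sum $\frac1{n^2}\sum_{|k|<n}\gamma_p(u,k)\sum_t K_h(t/n-u)K_h((t+k)/n-u)$. For $|k|\le L$, Lipschitz continuity of $K$ gives $\sum_t K_h(t/n-u)K_h((t+k)/n-u)=\sum_t K_h(t/n-u)^2+O(|k|h^{-2})$, while the Riemann-sum approximation (again by Lipschitz continuity of $K$, with $u$ in the interior so that $\int_0^1K_h(v-u)^2\dif v=h^{-1}\int K(x)^2\dif x$) gives $\frac1{n^2}\sum_t K_h(t/n-u)^2=\frac1{nh}\int K(x)^2\dif x+O((nh)^{-2})$. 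Summing over $k$ and using $\sum_{k\in\IZ}(1+|k|)|\gamma_p(u,k)|<\infty$, $\sum_{|k|\le L}\gamma_p(u,k)=\sigma_p^2(u)+O(\tilde\rho^{\,L})$, and that $|k|>L$ contributes only $O((nh)^{-1}\tilde\rho^{\,L})$, yields part (i) with $|R_{n,h}|\le c'\big((nh)^{-2}+n^{-1}\log(n)^2\big)$.

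For part (ii) I would run the same two steps with $K_h(\cdot-u)$ replaced by the weight $\phi:=K_h(\cdot-u)-K_{h'}(\cdot-u)$; the only additional point is that $\phi$ is supported in an interval of length $h$ but of sup-norm of order $1/h'$, so the Riemann-sum remainder is estimated separately on $\{|v-u|<h'/2\}$ and on $\{h'/2\le|v-u|<h/2\}$, both contributions being $O((nh')^{-2})$. This gives $\Var(\tilde P_h(u)-\tilde P_{h'}(u))=\frac1n\int\{K_h(x)-K_{h'}(x)\}^2\dif x\cdot\sigma_p^2(u)+R_{n,h,h'}$ with $|R_{n,h,h'}|\le c'\big((nh')^{-2}+n^{-1}\log(n)^2\big)$. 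For the lower bound I would use $\int K_h(x)^2\dif x=h^{-1}\int K(x)^2\dif x$ together with Cauchy--Schwarz for $\int K_h(x)K_{h'}(x)\dif x$ to obtain
\[
	\int\{K_h(x)-K_{h'}(x)\}^2\dif x\ \ge\ \frac{(1-\sqrt{h'/h})^2}{h'}\int K(x)^2\dif x\ \ge\ \frac{(1-a)^2}{4h'}\int K(x)^2\dif x,
\]
the last step from $h'\le ah$ and $1-a=(1-\sqrt a)(1+\sqrt a)\le 2(1-\sqrt a)$; combining this with $\sigma_p^2(u)\ge\sigma_{p,min}^2>0$ and absorbing $R_{n,h,h'}$ into the leading term for $n$ large yields $\Var(\tilde P_h(u)-\tilde P_{h'}(u))\ge c_6(a-1)^2\sigma_{p,min}^2(nh')^{-1}$ with $c_6=\frac18\int K(x)^2\dif x$.

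The step I expect to be the main obstacle is the uniform control, over $h$ (and $u$), of the stationary-approximation error in the opening display: the Lipschitz estimate $\|p(\tilde Y_t(t/n))-p(\tilde Y_t(u))\|_2\le c'h$ is effective only for small lags, the geometric decay of the dependence measure is effective only for large lags, and the two have to be glued together at a lag of order $\log n$; carefully tracking the constants there, together with the boundary terms in the Riemann-sum approximations, is what pins down the precise remainder bounds claimed.
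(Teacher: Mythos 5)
Your overall strategy is sound and genuinely different from the paper's. The paper does not compare covariances termwise: it replaces the whole weighted sum $\tilde P_h(u)$ by its stationary counterpart $\bar P_h(u)=\frac1n\sum_t K_h(t/n-u)\,p(\tilde Y_t(u))$ in $L^2$, using the projection operators $P_{t-k}$ and the bound $\|P_{t-k}\{p(\tilde Y_t(t/n))-p(\tilde Y_t(u))\}\|_2\le c\min\{|t/n-u|,\tilde\rho^{\,k}\}$ (your ``glue Lipschitz to geometric decay'' step, carried out at the level of martingale differences rather than at a lag cutoff $L\sim\log n$), and then computes $\Var(\bar P_h(u))$ exactly via $\sum_{k,l}\IE[P_0p_k\cdot P_0p_l]$ followed by the same kernel-shift and Riemann-sum replacements you use. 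Your termwise covariance comparison with the band split at $L=\lceil c\log n\rceil$ is a legitimate, more elementary alternative and delivers the stated remainder for (i). For the lower bound in (ii) your route is in fact cleaner than the paper's: the paper passes to $f(Q)=\frac{1}{(Q-1)^2}\int\{QK(Qy)-K(y)\}^2\dif y$ and a continuity/compactness argument whose limit at $Q\to1$ involves $K'$ (not guaranteed for a merely Lipschitz $K\in\sK$), whereas your reverse-triangle inequality $\int\{K_h-K_{h'}\}^2\dif x\ge(1-\sqrt{h'/h})^2\,(h')^{-1}\int K^2\dif x$ needs nothing beyond $K\in L^2$.

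There is, however, one concrete soft spot in part (ii), exactly at the step you flagged as delicate. You propose to rerun the near-diagonal covariance replacement with the per-pair error $c'h$ and the weight $\phi:=K_h(\cdot-u)-K_{h'}(\cdot-u)$, asserting that the only extra care concerns the Riemann sum. But with $|\phi|_\infty\asymp 1/h'$ the near-diagonal error then totals $\frac{1}{n^2}\sum_{|s-t|\le L}|\phi(s/n)||\phi(t/n)|\,c'h=O\big(hL/(nh')\big)$, which is not $O\big((nh')^{-2}+n^{-1}\log(n)^2\big)$ once $h/h'\gg\log n$ (e.g.\ $h\asymp1$ and $h'$ near $\underline{h}$), and in that regime it even dominates the leading term $\asymp(nh')^{-1}$, so both the claimed remainder and the ``absorb $R_{n,h,h'}$'' step behind your lower bound would fail as written. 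The fix is the refinement you already have implicitly: use the per-pair error $c'(|s/n-u|+|t/n-u|)$ instead of $c'h$, and pair each factor $|v-u|$ with its own kernel weight, noting that $|K_h(v-u)-K_{h'}(v-u)|\cdot|v-u|\le |K|_\infty$ uniformly (the weight is $O(1/h')$ where $|v-u|\le h'/2$ and $O(1/h)$ on the annulus $h'/2<|v-u|\le h/2$, where $K_{h'}$ vanishes); then the near-diagonal error is again $O(L/n)$ and (ii) closes with the claimed remainder. To be fair, the paper is terse at the very same spot (its (ii) says only that ``the rest of the proof is the same as in (i)'', and its lower bound likewise absorbs the remainder without comment), so you share rather than worsen that gloss; but since you spell out the bookkeeping, the $c'h$ bound as stated does not reach the claimed estimate and should be replaced by the paired bound above.
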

\begin{proof}[Proof of Lemma \ref{lemma_variance_calc}]
	(i) Put $\bar P_h(u) := \frac{1}{n}\sum_{t=1}^{n}K_h(t/n-u)\cdot p(\tilde Y_t(u))$. With some constant $\tilde c$ only dependent on $M,\chi,C,D$, we have by \reff{proof_lemma_bernstein_eq1},
	\begin{eqnarray*}
		\|P_{t-k}\{p(\tilde Y_t(t/n)) - p(\tilde Y_t(u))\}\|_2 &\le& 2\|p(\tilde Y_t(t/n)) - p(\tilde Y_t(u))\|_2 \le \tilde c\cdot |t/n-u|,\\
		\|P_{t-k}\{p(\tilde Y_t(t/n)) - p(\tilde Y_t(u))\}\|_2 &\le& 2\sup_{u\in[0,1]}\delta_2^{p(\tilde Y(u))}(k) \le \tilde c\cdot \tilde \rho^{k}
	\end{eqnarray*}
	 and thus
	\begin{eqnarray}
		&& \|(\tilde P_h(u) - \IE \tilde P_h(u)) - (\bar P_h(u) - \IE \bar P_h(u))\|_2\nonumber\\
		&\le& \frac{1}{nh}\sum_{k=0}^{\infty}\Big\|\sum_{t=1}^{n}K((t/n-u)/h)\cdot P_{t-k}\{p(\tilde Y_t(t/n)) - p(\tilde Y_t(u))\}\Big\|_2\nonumber\\
		&=&\frac{1}{nh}\sum_{k=0}^{\infty}\Big(\sum_{t=1}^{n} K((t/n-u)/h)^2\cdot \|P_{t-k}\{p(\tilde Y_t(t/n)) - p(\tilde Y_t(u))\}\|_2^2\Big)^{1/2}\nonumber\\
		&\le& \tilde c \cdot |K|_{\infty}(nh)^{-1/2}\sum_{k=0}^{\infty} \min\{\tilde \rho^{k},h\}.\label{lemma_variance_calc_eq104}
	\end{eqnarray}
	Since $h \ge n^{-1}$, we have with some constant $\bar c$ only dependent on $\tilde \rho$ that
	\begin{equation}
		%\sum_{k=0}^{\infty} \min\{ (\tilde \rho^{1/2})^{k},h^{1/2}\} \le \sum_{k=0}^{\lfloor \log(h)/\log(\tilde \rho)\rfloor}h^{1/2} + \sum_{k=\lceil \log(h)/\log(\tilde \rho)\rceil}^{\infty}(\tilde \rho^{1/2})^k \le \bar c \cdot h^{1/2}\log(n),\label{lemma_variance_calc_eq2}
		\sum_{k=0}^{\infty} \min\{ \tilde \rho^{k},h\} \le \sum_{k=0}^{\lfloor \log(h)/\log(\tilde \rho)\rfloor}h + \sum_{k=\lceil \log(h)/\log(\tilde \rho)\rceil}^{\infty}\tilde \rho^k \le \bar c \cdot h\log(n),\label{lemma_variance_calc_eq2}
	\end{equation}
	thus
	\begin{equation}
		\|(\tilde P_h(u) - \IE \tilde P_h(u)) - (\bar P_h(u) - \IE \bar P_h(u))\|_2 \le \tilde c \bar c |K|_{\infty}n^{-1/2}h^{1/2}\log(n).\label{lemma_variance_calc_eq10}
	\end{equation}
	Abbreviate $p_t := p(\tilde Y_t(u))$. Then
\begin{eqnarray}
	&& \|\bar P_h(u)-\IE \bar P_h(u)\|_2^2\nonumber\\
	&=& \frac{1}{n^2}\Big\|\sum_{k=0}^{\infty}\sum_{t=1}^{n}K_h(t/n-u)\cdot P_{t-k}p_t\Big\|_2^2\nonumber\\
	&=& \frac{1}{n^2}\sum_{k,l=0}^{\infty}\IE \Big[\sum_{t=1}^{n}K_h(t/n-u)\cdot P_{t-k}p_t\cdot \sum_{s=1}^{n}K_h(s/n-u)\cdot P_{s-l}p_s\Big]\nonumber\\
	&=& \frac{1}{(nh)^2}\sum_{k,l=0}^{\infty}\IE[P_{0}p_k\cdot P_{0}p_{l}]\nonumber\\
	&&\quad\quad\quad\quad\times \sum_{t:1\le t \le n, 1 \le t-k+l\le n}K((t/n-u)/h)K(((t-k+l)/n-u)/h).\label{lemma_variance_calc_eq1}
\end{eqnarray}
$K(((t-k+l)/n-u)/h)$ can be replaced by $K((t/n-u)/h)$ due to Lipschitz-continuity (Lipschitz constant $L_K$) of $K$ with replacement error
\begin{eqnarray*}
	&\le& \frac{L_K}{(nh)^3}\sum_{k,l=0}^{\infty}(k+l)\cdot |\IE[P_{0}p_k\cdot P_{0}p_{l}]|\sum_{t=1}^{n}|K((t/n-u)/h)|\\
	&\le& \frac{|K|_{\infty}L_K}{(nh)^2}\sum_{k,l=0}^{\infty} (k+l)\cdot \sup_{u\in[0,1]}\delta_2^{p(\tilde Y(u))}(k) \cdot  \sup_{u\in[0,1]}\delta_2^{p(\tilde Y(u))}(l) \le \tilde C\cdot |K|_{\infty}L_K(nh)^{-2}
\end{eqnarray*}
due to \reff{proof_lemma_bernstein_eq1} with some $\tilde C$ only depending on $M,\chi,C,D,\rho$. After the replacement, \reff{lemma_variance_calc_eq1} reads
\begin{eqnarray*}
	&&\frac{1}{(nh)^2}\sum_{t=1}^{n}K((t/n-u)/h)^2\cdot \sum_{k,l=0}^{\infty}\IE[P_{0}p_k\cdot P_{0}g_{l}]\\
	&=& \frac{1}{(nh)^2}\sum_{t=1}^{n}K((t/n-u)/h)^2\cdot \sigma_p^2(u).
\end{eqnarray*}
Since $K$ is Lipschitz continuous, this can be replaced by $\frac{1}{nh}\int K(x)^2 \dif x \cdot \sigma_p^2(u)$ with replacement error $L_K |K|_{\infty}(nh)^{-2}\sigma_p^2(u)$.\\
(ii) First note that we have with $Q := \frac{h'}{h} \in (0,a]$:
\[
    \frac{1}{n}\int \{K_h(x) - K_{h'}(x)\}^2 \dif x = \frac{1}{nh'}\int \{ Q K(Q y) - K(y) \}^2 \dif y
\]
Let $f(Q) := \frac{1}{(Q-1)^2}\int \{Q K(Qy) - K(y)\}^2 \dif y$. Then
\begin{eqnarray*}
    \lim_{Q \to 1}f(Q) &=& \lim_{Q\to 1}\int \{Qy \frac{K(Qy) - K(y)}{Qy-y} + K(y)\}^2 \dif y\\
    &=& \int \{y K'(y) + K(y)\}^2 \dif y > 0
\end{eqnarray*}
by assumption, and
\[
    \lim_{Q\to0}f(Q) = \int K(y)^2 \dif y > 0.
\]
Since $Q \mapsto f(Q)$ is a continuous function, we conclude that $f_{min} := \inf_{Q\in[0,1]}f(Q) > 0$, and thus
\[
    \alpha_{n,h,h'}^2 := \frac{1}{n}\int\{K_h(x) - K_{h'}(x)\}^2 \dif x \cdot \sigma_{p}^2(u)
\]
fulfills
\[
	\frac{nh'}{(Q-1)^2}\alpha_{n,h,h'}^2 \ge \sigma_{p,min}^2\cdot f_{min} > 0. 
\]
We conclude that
\[
	\alpha_{n,h,h'}^2 \ge (a-1)^2\sigma_{p,min}^2 f_{min} \cdot (nh')^{-1}.
\]
The rest of the proof is the same as in (i).

\end{proof}

%% The Appendices part is started with the command \appendix;
%% appendix sections are then done as normal sections
%% \appendix

%% \section{}
%% \label{}

%% References
%%
%% Following citation commands can be used in the body text:
%% Usage of \cite is as follows:
%%   \cite{key}         ==>>  [#]
%%   \cite[chap. 2]{key} ==>> [#, chap. 2]
%%

%% References with BibTeX database:

\bibliographystyle{plain}
\bibliography{adaptation}

%% Authors are advised to use a BibTeX database file for their reference list.
%% The provided style file elsarticle-num.bst formats references in the required Procedia style

%% For references without a BibTeX database:

% \begin{thebibliography}{00}

%% \bibitem must have the following form:
%%   \bibitem{key}...
%%

% \bibitem{}

% \end{thebibliography}

\end{document}